\newcommand{\mysection}[1]{\section{#1}
\setcounter{equation}{0}}
\newtheorem{theorem}{Theorem}[section]
\newtheorem{corollary}[theorem]{Corollary}
\newtheorem{lemma}[theorem]{Lemma}
\newtheorem{proposition}[theorem]{Proposition}
\theoremstyle{definition}
\newtheorem{remark}[theorem]{Remark}
\theoremstyle{definition}
\theoremstyle{definition}
\newtheorem{assumption}[theorem]{Assumption}
\def\dashint{\operatorname%
{\,\,\text{\bf--}\kern-.98em\DOTSI\intop\ilimits@\!\!}}
\def\vg{\textit{\textbf{g}}}
\def\bR{\mathbb{R}}
\def\bH{\mathbb{H}}
\def\fp{\mathfrak{p}}
\def\cA{\mathcal{A}}
\def\cH{\mathcal{H}}
\def\cU{\mathcal{U}}
\newcommand\rV{\mathring{V}}
\newcommand{\ip}[1]{\left\langle#1\right\rangle}
\newcommand{\set}[1]{\left\{#1\right\}}
\newcommand{\norm}[1]{\lVert#1\rVert}
\newcommand{\abs}[1]{\left\lvert#1\right\rvert}
\providecommand{\tri}[1]{\lvert\lVert#1\rvert\rVert}
\newcommand{\Div}{\operatorname{div}}
\begin{document}

\title[Quasilinear elliptic and parabolic equations]{Global regularity of weak solutions to quasilinear elliptic and parabolic equations with controlled growth}

\author[H. Dong]{Hongjie Dong}
\address[H. Dong]{Division of Applied Mathematics, Brown University,
182 George Street, Providence, RI 02912, USA}
\email{Hongjie\_Dong@brown.edu}
\thanks{H. Dong was partially supported by NSF grant number DMS-0800129.}

\author[D. Kim]{Doyoon Kim}
\address[D. Kim]{Department of Applied Mathematics, Kyung Hee University,
1, Seochun-dong, Gihung-gu, Yongin-si, Gyeonggi-do 446-701 Korea}
\email{doyoonkim@khu.ac.kr}
%\thanks{D. Kim was partially supported by Basic Science Research Program through NRF}

\subjclass[2010]{35K59,35J62,35B65}

\keywords{quasilinear elliptic and parabolic equations, boundary value problems, BMO coefficients, Sobolev spaces.}

\begin{abstract}
We establish global regularity for weak solutions to quasilinear divergence form elliptic and parabolic equations over Lipschitz domains with controlled growth conditions on low order terms.
The leading coefficients belong to the class of BMO functions with small mean oscillations with respect to $x$.
\end{abstract}

\maketitle

\mysection{Introduction}

The paper is devoted to the study of the global regularity of weak solutions to quasilinear divergence form elliptic equations on Lipschitz domains with the Dirichlet boundary condition:
\begin{equation}							 \label{eq3.24}
\left\{
  \begin{aligned}
    -D_i\left(A_{ij}(x,u) D_j u + a_i(x,u)\right) = b(x,u,\nabla u)
 \quad & \hbox{in $\Omega$,} \\
    u=0 \quad & \hbox{on $\partial \Omega$,}
  \end{aligned}
\right.
\end{equation}
and quasilinear divergence form parabolic equations on cylindrical domains with the Cauchy-Dirichlet boundary condition:
\begin{equation}							 \label{eq3.30}
\left\{
  \begin{aligned}
    u_t-D_i\left(A_{ij}(t,x,u) D_j u + a_i(t,x,u)\right) = b(t,x,u,\nabla u)
 \quad & \hbox{in $\cU_T$,} \\
    u=0 \quad & \hbox{on $\partial_p \cU_T$.}
  \end{aligned}
\right.
\end{equation}
Here $\Omega\subset\bR^d$ is a bounded Lipschitz domain with a small Lipschitz constant, $\cU_T=(0,T)\times \Omega$,
and $\partial_p \cU_T = \left([0,T) \times \partial\Omega\right) \cup \left(\{0\}\times \Omega\right)$.

The nonlinear terms $A_{ij}(t,x,u)$, $a_i(t,x,u)$, and $b(t,x,u,\xi)$
in the parabolic equation \eqref{eq3.30} are of Carathe\'{o}dory type, i.e., they are measurable in $(t,x) \in \bR\times\bR^d$ for all $(u,\xi) \in \bR \times \bR^d$,
and continuous in $(u,\xi) \in \bR \times \bR^d$ for almost all $(t,x) \in \bR\times\bR^d$.
The terms in the elliptic equation \eqref{eq3.24} are also of Carathe\'{o}dory type with no time variable.
The leading coefficients $A_{ij}$ are bounded and uniformly elliptic, that is, for some constant $\mu\in (0,1]$,
$$
|A_{ij}| \le \mu^{-1},
\quad
A_{ij}\xi_i \xi_j \ge \mu |\xi|^2\quad \forall\xi\in \bR^d.
$$
We also assume that $A_{ij}(\cdot,x,u)$ are uniformly continuous in $u$ and have small mean oscillations with respect to $x$.
It is well-known that functions in this class are not necessarily continuous.
In the parabolic case, the coefficients $A_{ij}$ are further allowed to be merely measurable in the time variable.

The lower order terms $a_i$ and $b$ in \eqref{eq3.30} satisfy the following {\em controlled growth conditions}:
$$
|a_i(t,x,u)| \le \mu_1 (|u|^{\lambda_1} + f),
$$
$$
|b(t,x,u,\nabla u)| \le \mu_2 (|\nabla u|^{\lambda_2} + |u|^{\lambda_3} + g),
$$
for some constants $\mu_1,\mu_2>0$, where
$\lambda_1= \frac{d+2}{d}$, $\lambda_2= \frac{d+4}{d+2}$, $\lambda_3= \frac{d+4}{d}$,
and
$$
f \in L_{\sigma}(\cU_T),
\quad
g\in L_{\tau}(\cU_T),
\quad
\sigma\in (d+2,\infty),
\quad
\tau\in (d/2+1,\infty).
$$
We impose similar conditions in the elliptic case; see Section \ref{main}.
The controlled growth conditions guarantee that weak solutions to the equations \eqref{eq3.24}
and \eqref{eq3.30} are well-defined (see an explanation above Theorem \ref{thm1}).
If $\lambda_i$, $i=1,2,3$, are strictly less than the
%right end points of the intervals
numbers above,
we say that the equations satisfy %one calls the conditions
{\em strictly controlled growth conditions}.

Under the above assumptions, in this paper we prove that weak solutions to the quasilinear equations \eqref{eq3.24} and \eqref{eq3.30} have higher global integrability. For instance, a weak solution to \eqref{eq3.30} is proved to be a member of $\cH_p^1(\cU_T)$ (see Section \ref{sec1} for the definition of the $\cH_p^1$ space),
where $p>d+2$ is determined only by the integrability of $f$ and $g$ (i.e., $\sigma$ and $\tau$).
As an easy consequence, by the Sobolev embedding theorem,
weak solutions turn out to be {\em globally} H\"{o}lder continuous with the H\"{o}lder exponents depending only on the dimension and the integrability %summability
of $f$ and $g$.

There has been tremendous work on the regularity of weak solutions to divergence type elliptic and parabolic equations/systems.
Let us mention some of them to explain our results here.
For linear equations, the fundamental results by De Giorgi \cite{De Giorgi} and Nash \cite{Nash} show the interior H\"{o}lder regularity of weak solutions.
For linear or quasilinear/nonlinear systems, to which weak solutions are in general partially regular, there has been a lot of discussions on higher integrability of solutions and reverse H\"older's inequalities (see, for instance, \cite{Pulv67,Gehring,Camp,Gi83}), which are the key ingredients in the proofs of partial regularity results.
When quasilinear or nonlinear equations/systems are considered, the regularity of weak solutions has been investigated under various growth conditions on lower order terms.
% in
%\cite{GiaMod, Camp1982, Camp1986, Gi83}.
%\cite{GiaMod}1979<-higher order,
%\cite{Camp1982}1982,
%\cite{Gi83}1983,
%\cite{FengZheng}2008,
%\cite{Pala09}2009.
%For quasilinear or nonlinear {\em parabolic} equations/systems, for instance, see
%\cite{Camp1981, Camp1982P, GiaStr, Camp1984, MarinoMaugeri}.
%\cite{Camp1981}1981
%\cite{Camp1982P}1982
%\cite{GiaStr}1982
%\cite{Camp1984}1984
%\cite{MarinoMaugeri}1985 higher order.
By nonlinear systems we mean here, in the elliptic case, equations of the form $\Div A(x,u,Du) = b(x,u,Du)$.

More specifically, with linear ($\lambda_2=1$), quadratic ($\lambda_2=2$), or (strictly) controlled growth conditions imposed on the lower order nonlinear terms, various reverse H\"{o}lder's inequalities, and partial regularities of weak solutions to quasilinear or nonlinear systems have been obtained in \cite{MeyElc,Gi78,GiaMod, Camp1982, Gi83, Camp1986, DuGr00} and \cite{Camp1981, Camp1982P,GiaStr, Camp1984, MarinoMaugeri}.
%proves that weak  satisfy reverse H\"{o}lder inequalities,
%and are partially H\"{o}lder continuous outside singular sets, the sizes of which are proved to be zero in terms of Lebesgue or Hausdorff measure.
In particular, linear growth conditions for parabolic systems were considered in \cite{Camp1981} and \cite{GiaStr}, where the latter one also considered a quadratic growth condition.
The strictly controlled growth conditions were investigated in \cite{Camp1982P, Camp1984, MarinoMaugeri} for parabolic systems. In \cite{Gi78,Camp1986,DuGr00} elliptic systems with quadratic growth conditions were considered.
The controlled growth conditions for elliptic systems were investigated in \cite{Camp1982} and \cite{Gi83}.
%\cite{Camp1981} linear
%\cite{GiaStr} linear and quadratic growth,
%\cite{Camp1982P} strictly growth condition
%\cite{Camp1984} strictly growth condition,
%\cite{MarinoMaugeri} strictly growth condition
%\cite{Camp1982} controlled
%{\color{blue}\cite{Gi83} controlled}
In \cite{GiaMod} the authors considered three different kinds of growth conditions including the controlled and quadratic growth conditions.
We remark that the quadratic growth conditions are always accompanied by an additional smallness assumption on solutions.
%other than the controlled one are given.

The corresponding boundary estimates are more delicate. Under the controlled growth conditions, Arkhipova investigated {\em Neumann problems} for divergence type quasilinear elliptic and parabolic systems, for example, in \cite{Arkh94, Arkh95} (see references therein), where she proved reverse H\"older's inequality and partial regularities up to the boundary of solutions. The key steps are careful boundary estimates using the structure of Neumann boundary conditions.
To the best of our knowledge, the corresponding results for the Dirichlet problem \eqref{eq3.30} are not available in the literature.
We also mention that, under a quadratic growth condition, similar results (i.e., partial regularity up to the boundary) for quasilinear elliptic systems with non-homogeneous Dirichlet boundary conditions were obtained later in \cite{Grotowski}.
Regarding general nonlinear homogeneous parabolic systems (i.e. $b=0$), very recently B\"ogelein, Duzaar and Mingione \cite{BoDuMi10} obtained boundary partial H\"older regularity of $Du$ for the Cauchy-Dirichlet problem; see also \cite{DuKrMi07} for a corresponding result for elliptic systems.
Note that in general global regularity cannot be expected for systems (see \cite{Gi_ex,StJoMa86}), and even for partial regularities usually one requires the leading coefficients to possess certain regularity in all involved variables (usually uniform continuity).

Recently regularity theory for quasilinear equations with {\em discontinuous} coefficients has been studied in \cite{FengZheng,Pala09}.
%While in the papers mentioned above, whenever needed, the leading terms are assumed to be uniformly continuous in all involved variables, the papers \cite{FengZheng,Pala09} studied regularity theory for quasilinear equations with {\em discontinuous} coefficients.}
%This regularity assumption was relaxed by Feng and Zheng
In \cite{FengZheng}, Feng and Zheng established an interior reverse H\"older's inequality for quasilinear elliptic systems with the controlled growth conditions under the assumption that the leading coefficients are in the class of VMO functions with respect to $x$ variables.
In addition, %when scalar equations are considered,
they obtained the optimal interior H\"{o}lder continuity of solutions to scalar equations as well as partial H\"{o}lder regularity of solutions to systems.
With the same VMO assumption on the leading terms, Palagachev \cite{Pala09} proved the {\em global} H\"{o}lder regularity of solutions to elliptic quasilinear equations in $C^1$ domains.
He used a bootstrap argument which, however, requires the strictly controlled growth conditions.
The relaxation of the regularity assumptions on the leading coefficients from uniform continuity to VMO in \cite{FengZheng, Pala09} relies on the $L_p$-theory of linear equations/systems with VMO coefficients, the study of which was initiated in \cite{CFL1}. For quasilinear nondivergence form equations with discontinuous coefficients, we refer the reader to the book \cite{MaPaSo00} and reference therein.

In view of the more general growth conditions on coefficients and the global nature of the H\"{o}lder regularity in this paper, our results can be considered as generalizations of the known regularity results for weak solutions to quasilinear divergence form elliptic and parabolic equations with the Dirichlet boundary condition. In particular, we generalize the results in \cite{Pala09} for elliptic as well as parabolic equations under the controlled growth conditions. % in $C^1$-domains}.
As noted earlier, in the parabolic case we do not require any regularity of the terms in the equation as functions of the time variable.
The controlled growth conditions are optimal (see, for instance, a counterexample in \cite{Pala09}) unless some additional boundedness conditions on weak solutions are imposed.
It is worth noting that
in the parabolic case with $d=1,2$ the growth conditions defined in this paper are more general than those commonly used before; see, for example, \cite{Camp1984}. This is because we use a multiplicative inequality (Lemma \ref{lem5.1}) instead of the Sobolev imbedding theorem, which is not optimal when $d=1,2$.

As we mentioned above, the bootstrap argument in \cite{Pala09} cannot be applied directly to weak solutions under the controlled growth conditions.
To achieve our main results, we first establish reverse H\"older's inequalities (slightly higher integrability) for quasilinear equations \eqref{eq3.24} and \eqref{eq3.30} under the controlled growth conditions. It should be mentioned that the reverse H\"older's inequalities, which have their own interest, also hold for systems.
The main difficulty here is to have the interior and boundary estimates in the same form in order to apply Gehring-Giaquinta-Modica's lemma. It turns out that in our case the proof of the interior estimate for parabolic equations is more involved (see Proposition \ref{prop01}). The slightly higher integrality enables us to go through the bootstrap argument shown in \cite{Pala09}, by utilizing the recent development of $L_p$-theory for divergence form linear equations with BMO coefficients (for instance, see \cite{DK09} and references therein).
Note that, as in \cite{Pala09},  we have explicit descriptions of H\"{o}lder exponents in terms of the summability of $f$ and $g$, whereas such explicit H\"{o}lder exponents are not shown in the De Giorgi-Moser-Nash theory. It is worth mentioning that by using the slightly higher integrability results established in this paper and the arguments, for example, in \cite{Gi83,Arkh94,FengZheng}, one may also obtain the partial regularity up to the boundary of weak solutions to systems. We do not intend to pursue this in the current paper.

This paper is organized as follows. In Section \ref{sec1} we introduce some notation and definitions. Then we state our main results in Section \ref{main}.
Section \ref{sec4} is devoted to reverse H\"{o}lder's inequalities for the parabolic case, which are obtained by boundary and interior estimates.
In Section \ref{sec7} we present some $L_p$-theory for linear equations in order to run the bootstrap argument in Section \ref{sec6}, where higher integrability of solutions is proved, thus the global H\"{o}lder regularity follows. In the last Section \ref{elliptic-sec} we briefly treat the elliptic case.

\mysection{Notation and definitions}
								\label{sec1}

We use $X=(t,x)$ to denote a point in $\bR^{d+1}$; $x=(x^1,\ldots, x^d)$ will always be a point in $\bR^d$. We define the parabolic distance between two points $X=(t,x)$ and $Y=(s,y)$ in $\bR^{d+1}$ as
\[\abs{X-Y}_p:=\max(\sqrt{\abs{t-s}},\abs{x-y}),\]
where $\abs{\,\cdot\,}$ denotes the usual Euclidean norm.

For a given function $u=u(t,x)$ defined on $Q \subset \bR^{d+1}$, we use
$D_i u$ for $\partial u/\partial x^i$,  while we use $u_t$ for $\partial u/\partial t$. For $\alpha \in (0,1]$, we define
\[
\abs{u}_{\alpha/2,\alpha;Q}=\abs{u}_{0;Q}+[u]_{\alpha/2,\alpha;Q}: = \sup_{X\in Q}\,\abs{u(X)}+\sup_{\substack{X, Y \in Q\\ X\neq Y}} \frac{\abs{u(X)-u(Y)}}{\abs{X-Y}_p^\alpha}.
\]
By $C^{\alpha/2,\alpha}(Q)$ we denote the set of all bounded measurable functions $u$ on $Q$ for which $\abs{u}_{\alpha/2,\alpha;Q}$ is finite.
We use the following notation for parabolic cylinders in $\bR^{d+1}$:
\begin{align*}
Q_r(X)=(t-r^2,t)\times B_r(x),
\end{align*}
where $B_r(x)$ is the usual Euclidean ball of radius $r$ centered at $x\in \bR^d$.
For an open set $\Omega\subset\bR^{d}$, we set
\[
\Omega_r (x) =  \Omega\cap B_r (x).
\]
For an open set $\cU\subset\bR^{d+1}$, we set
\begin{equation*}
				%				\label{eq0517}
\cU_r (X) =  \cU\cap Q_r (X).	
\end{equation*}
We write $\cU(t_0)$ for the set of all points  $(t_0,x)$ in $\cU$ and $I(\cU)$  for the set of all $t$ such that $\cU(t)$ is nonempty.
For a function $u$ defined on $\cU$, we occasionally use the following norm:
\[
\tri{u}_{\cU}^2=\|Du\|_{L_2(\cU)}^2+\sup_{t\in I(\cU)}\, \|u(t,\cdot)\|_{L_2(\cU(t))}^2.
\]

Now let $\cU:=\cU_T^S$ be the cylinder $(S,T)\times\Omega$, where $-\infty<S<T<\infty$ and $\Omega$ is a bounded domain in $\bR^d$.
Throughout the paper, as in \eqref{eq3.30} we write $\cU_T$ when $S=0$.
We denote by $W^{0,1}_2(\cU)$ the Hilbert space with the inner product
\[
\ip{u,v}_{W^{0,1}_2(\cU)}:=\int_{\cU} uv+\sum_{k=1}^d \int_{\cU} D_k u D_k v.
\]
Set $W^{1,1}_2(\cU)$ to be the subspace of $W^{0,1}_2(\cU)$ such that $u_t \in L_2(\cU)$, and $V_2(\cU)$ to be the Banach space consisting of all elements of $W^{0,1}_2(\cU)$ having a finite norm $\norm{u}_{V_2(\cU)}:= \tri{u}_{\cU}$.
By $\rV_2(\cU)$ we mean the set of all functions $u$ in $V_2(\cU)$ that vanishes on the lateral boundary $\partial_l \cU:=(S,T)\times \partial\Omega$ of $\cU$. From a well known Sobolev-like imbedding theorem (see e.g., \cite[\S II.3]{LSU}),
we have
\begin{equation} \label{eqn:2.2}
\norm{u}_{L_{2+4/d}(\cU)} \leq N(d)\, \tri{u}_{\cU}, \quad\forall u\in \rV_2(\cU).
\end{equation}

\begin{comment}
We set
\begin{equation*}
L_{q,p}(\cU)=L_q((S,T),L_p(\Omega)),
\end{equation*}
i.e., $f(t,x)\in L_{q,p}(\cU)$ if
\begin{equation*}
\|u\|_{L_{q,p}(\cU)}=\left(\int_S^T\left(\int_{\Omega}
|u(t,x)|^p \,dx\right)^{q/p}\,dt\right)^{1/q}<\infty.
\end{equation*}
Denote
\begin{align*}
L_p(\cU)&= L_{p,p}(\cU).
\end{align*}
\end{comment}
We denote $\bH^{-1}_{p}(\cU)$ to be the space consisting of all functions $u$ satisfying
$$
\inf\set{\|\vg\|_{L_{p}(\cU)}+\|h\|_{L_{p}(\cU)}\,|\,u=\Div \vg+h}<\infty.
$$
It is easy to see that $\bH^{-1}_{p}(\cU)$ is a Banach space. Naturally, for any $u\in \bH^{-1}_{p}(\cU)$, we define the norm
\begin{equation*}
              %                              \label{eq10.36pm}
\|u\|_{\bH^{-1}_{p}(\cU)}=\inf\set{\|\vg\|_{L_{p}(\cU)}+\|h\|_{L_{p}(\cU)}\,|\,
u=\Div \vg+h}.
\end{equation*}
We also define
$$
\cH^{1}_{p}(\cU)=
\set{u:\,u,Du \in L_{p}(\cU),u_t\in \bH^{-1}_{p}(\cU)}.
$$
Note that $\cH_2^1(\cU) \subset V_2(\cU)$.

Let $a\wedge b=\min(a,b)$. Finally, by $N(d,p,\cdots)$ we mean that $N$ is a constant depending only
on the prescribed quantities $d, p,\cdots$.

\mysection{Main results}
							\label{main}
							
We first introduce a bounded Lipschitz domain $\Omega$ which we use throughout the paper.
A constant $\beta$ will be specified later.

\begin{assumption}[$\beta$]
                                    \label{assump2}
There is a constant $R_0\in (0,1]$ such that, for any $x_0\in \partial\Omega$ and $r\in(0,R_0]$, there exists a Lipschitz
function $\phi$: $\bR^{d-1}\to \bR$ such that
$$
\Omega\cap B_r(x_0) = \{x \in B_r(x_0)\, :\, x^1 >\phi(x')\}
$$
and
$$
\sup_{x',y'\in B_r'(x_0'),x' \neq y'}\frac {|\phi(y')-\phi(x')|}{|y'-x'|}\le \beta
$$
in an appropriate coordinate system.
\end{assumption}

Let us first state our results for elliptic equations,
assuming the following controlled growth conditions % shown in the introduction, it is enough to consider the following conditions
on the lower order terms:
$$
|a_i(x,u)| \le \mu_1 (|u|^{\gamma/2} + f),
$$
$$
|b(x,u,\nabla u)| \le \mu_2 (|\nabla u|^{2(1-1/\gamma)} + |u|^{\gamma-1} + g),
$$
for some constants $\mu_1,\mu_2>0$, where
$$
f\in L_2(\Omega),\quad g\in L^{\frac \gamma {\gamma-1}}(\Omega),\quad
\gamma =
\left\{
\begin{aligned}
&\frac{2d}{d-2}, \quad d > 2,\\
&\text{any number bigger than $2$}, \quad d = 2.
\end{aligned}
\right.
$$
We call $u\in \mathring{W}^1_2(\Omega)$ a weak solution to \eqref{eq3.24} if for any $\phi\in \mathring{W}^1_2(\Omega)$, we have
$$
\int_\Omega (A_{ij}(x,u)D_ju+a_i(x,u)) D_i\phi\,dx=\int_\Omega b(x,u,Du)\phi\,dx.
$$
Note that the controlled growth conditions are natural conditions because they are needed for the convergence of the integrals in the definition of weak solutions above.

\begin{theorem}[Reverse H\"older's inequality for elliptic equations]
                                        \label{thm1}
Let $u\in \mathring{W}^1_2(\Omega)$ be a weak solution to \eqref{eq3.24}. Suppose in addition that $f\in L_{\sigma}(\Omega)$ and $g\in L_{\tau}(\Omega)$ for some $\sigma\in (2,\infty)$ and $\tau\in (\gamma/(\gamma-1),\infty)$. Then there exists $p>2$ depending only on $d$, $\mu$, $\mu_1$,
$\mu_2$, $\sigma$, $\tau$, $\beta$, and $u$, such that
$$
\|u\|_{L_{\gamma p/2}(\Omega)} + \|Du\|_{L_p(\Omega)}\le N,
$$
where $N = N(d,\mu,\mu_1,\mu_2,\sigma,\tau, \beta, u, \|f\|_{L_\sigma(\Omega)},
\|g\|_{L_\tau(\Omega)}, |\Omega|)$.
\end{theorem}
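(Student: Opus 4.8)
The plan is to reduce the assertion to a reverse Hölder inequality with increasing supports, valid in one and the same form for balls centered at interior points of $\Omega$ and at points of $\partial\Omega$, and then to apply the Gehring--Giaquinta--Modica lemma. The starting point is a Caccioppoli-type estimate. Fix $x_0\in\overline\Omega$, $r\in(0,R_0]$, a cutoff $\eta\in C_c^\infty(B_r(x_0))$ with $\eta\equiv1$ on $B_{r/2}(x_0)$ and $|D\eta|\le C/r$, and test the weak formulation with $\varphi=\eta^2(u-c)$: for interior balls $B_r(x_0)\subset\Omega$ take $c$ to be the average of $u$ over $\Omega_r(x_0)$, while for $x_0\in\partial\Omega$ take $c=0$ and extend $u$ by zero to $B_r(x_0)$, which is legitimate since $u\in\mathring{W}^1_2(\Omega)$. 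Using the ellipticity and boundedness of $A_{ij}$ on the left and the controlled growth bounds on $a_i$ and $b$ on the right, together with Young's inequality, one absorbs all contributions carrying $|Du|^2\eta^2$. The exponents $\gamma/2$, $2(1-1/\gamma)$, $\gamma-1$ in the growth conditions are precisely calibrated so that what is left on the right-hand side is controlled by
\[
\frac{1}{r^2}\int_{\Omega_r(x_0)}|u-c|^2\,dx\;+\;\int_{\Omega_r(x_0)}\bigl(|u|^\gamma+f^2+g^{\gamma/(\gamma-1)}\bigr)\,dx .
\]

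Next I would convert this into a reverse Hölder inequality. Since $\gamma$ is the critical Sobolev exponent (for $d=2$, any exponent, with a favorable power of $r$), the Sobolev and Sobolev--Poincaré inequalities $\|u-c\|_{L_\gamma(\Omega_r(x_0))}\le N\|Du\|_{L_2(\Omega_r(x_0))}$ hold with $r$-independent constants; at the boundary one uses the zero extension together with the measure density estimate $|B_r(x_0)\setminus\Omega|\ge c_0|B_r(x_0)|$ valid for $x_0\in\partial\Omega$, $r\le R_0$ (with $c_0=c_0(\beta)$ close to $1/2$ for small $\beta$), which keeps all constants uniform up to $\partial\Omega$. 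Applying Sobolev--Poincaré with the lower exponent $2d/(d+2)$ to the first term above turns it into $N\bigl(\dashint_{\Omega_r(x_0)}|Du|^{2\theta}\,dx\bigr)^{1/\theta}$ with $\theta=d/(d+2)\in(0,1)$. The delicate terms are those stemming from $|u|^\gamma$ (and, for interior balls, from the average $c$): being at the critical exponent, no power of $r$ controls them by itself. Here one uses that, because $u\in\mathring{W}^1_2(\Omega)\subset L_\gamma(\Omega)$, the quantity $\sup_{x_0}\int_{\Omega_r(x_0)}(|Du|^2+|u|^\gamma)\,dx$ tends to $0$ as $r\to0$ by absolute continuity of the integral; choosing $R_1=R_1(u,d,\mu,\ldots)\in(0,R_0]$ so that this quantity is below a suitable threshold, the critical contributions are, after a standard hole-filling iteration, either absorbed into the left-hand side or folded into a forcing term which, together with $f^2\in L_{\sigma/2}$ and $g^{\gamma/(\gamma-1)}\in L_{\tau(\gamma-1)/\gamma}$ (and, if needed, a preliminary improvement of the summability of $u$ obtained from the equation via truncation), belongs to $L_{s_0}$ for some $s_0>1$. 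This is precisely where the dependence of $p$ and $N$ on the solution $u$ enters. The result is a reverse Hölder inequality
\[
\dashint_{\Omega_{r/2}(x_0)}|Du|^2\,dx\le N\Bigl(\dashint_{\Omega_r(x_0)}|Du|^{2\theta}\,dx\Bigr)^{1/\theta}+N\dashint_{\Omega_r(x_0)}h\,dx,\qquad r\le R_1,\ x_0\in\overline\Omega,
\]
with $h\in L_{s_0}$, $s_0>1$, in the same form for interior and boundary balls.

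Together with $Du\in L_2(\Omega)$ this is the hypothesis of the Gehring--Giaquinta--Modica lemma (in its formulation up to the boundary), which yields $Du\in L_p(\Omega)$ for some $p=p(d,\mu,\mu_1,\mu_2,\sigma,\tau,\beta,u)>2$ with $\|Du\|_{L_p(\Omega)}\le N$, the constant depending in addition on $\|f\|_{L_\sigma(\Omega)}$, $\|g\|_{L_\tau(\Omega)}$, $|\Omega|$, and $\|u\|_{\mathring{W}^1_2(\Omega)}$. Finally, since $p\ge2$ one has $\gamma p/2\le dp/(d-p)$ when $p<d$ (and $u$ lies in every $L_q(\Omega)$ when $p\ge d$), so the Sobolev inequality for $\mathring{W}^1_p(\Omega)$ gives $u\in L_{\gamma p/2}(\Omega)$ with the stated norm bound. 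I expect the main obstacle to be exactly the treatment of the critical nonlinear terms in the Caccioppoli estimate: the controlled growth conditions are borderline, so the test-function computation produces scale-invariant contributions that cannot be absorbed by smallness of $r$ alone, and one is forced to use the absolute continuity of $\int_{\Omega_r}(|Du|^2+|u|^\gamma)\,dx$, which is what makes $p$ and $N$ depend on $u$; a secondary difficulty is making the interior and boundary estimates take literally the same form, for which the small Lipschitz constant $\beta$ and the zero extension of $u$ are essential. Alternatively, all of this can be obtained by specializing to the time-independent case the parabolic arguments developed in Section~\ref{sec4}.
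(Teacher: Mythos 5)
Your overall strategy coincides with the paper's: a Caccioppoli estimate from testing with $\eta^2(u-c)$, Sobolev--Poincar\'e at the lower exponent $q=2d/(d+2)$, smallness of $\int_{\Omega_r}(|Du|^2+|u|^\gamma)$ via absolute continuity to neutralize the critical terms, and then a Gehring--Giaquinta--Modica lemma valid up to the boundary (the paper uses its boundary estimate, Proposition \ref{prop2}, together with the interior estimate of Theorem V.2.2 in \cite{Gi83}, which is the elliptic analogue of Proposition \ref{prop01}). Your boundary half is correct and matches Proposition \ref{prop2}: there $c=0$, the zero extension and the density of $B_r(x_0)\setminus\Omega$ give the Poincar\'e inequalities, and the critical term $\int_{\Omega_r}|u|^\gamma$ is bounded by $\bigl(\int_{\Omega_r}|Du|^2\bigr)^{\gamma/2-1}\int_{\Omega_r}|Du|^2$, whose first factor is uniformly small for small $r$ and plays the role of the admissible $\theta$-coefficient.

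The gap is in the interior case, where your final displayed inequality keeps only $|Du|$ on both sides with a forcing term $h\in L_{s_0}$, $s_0>1$. After splitting $|u|^\gamma\le N|u-c|^\gamma+N|c|^\gamma$, the oscillatory part is indeed handled by the smallness mechanism, but the mean part $|c|^\gamma$ is not controlled by any local average of $|Du|$ (a constant has zero gradient), and it cannot be folded into the forcing term either: a priori $|u|^\gamma$ is only in $L_1$, and placing it in $\Psi$ of the Gehring lemma would require precisely the higher integrability of $u$ being proved. Your fallback, ``a preliminary improvement of the summability of $u$ via truncation,'' is unsubstantiated and essentially circular at controlled (critical) growth: the zero-order coefficient behaves like $|u|^{\gamma-2}\in L_{d/2}$, the borderline case where truncation/iteration arguments give nothing new (compare the parabolic case, where Lemma \ref{lembd} needs the conclusion of Theorem \ref{thm3} as input, not conversely; also note the result is asserted for systems, where truncation is unavailable). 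The repair is the paper's: keep $|u|^\gamma$ on the left, take $\Phi=|Du|^q+|u|^{\gamma q/2}$ in the Gehring lemma, and bound $|c|^\gamma\le\bigl(\dashint_{B_r}|u|^{\gamma q/2}\bigr)^{2/q}$ by H\"older's inequality (this is the elliptic counterpart of the treatment of $I_{22}$ in Proposition \ref{prop01}, simpler here since $c$ is a genuine constant). With that modification the interior and boundary estimates take the same form, and the $L_{\gamma p/2}$ bound on $u$ comes out either directly from the lemma or, as you propose, from $\|Du\|_{L_p}$ and the Sobolev inequality, which is fine once the gradient estimate is actually in place.
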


To get the optimal global regularity for the equation \eqref{eq3.24}, we need a few more assumptions.
%state our regularity result on solutions to the equation \eqref{eq3.24} we use the following notation for the regularity assumption on $A_{ij}$.}
Let
$$
A^{\#}_R = \sup_{1 \le i,j \le d}\sup_{\substack{x_0 \in \bR^d\\ z_0 \in \bR, r \le R}}\dashint_{B_r(x_0)}\dashint_{B_r(x_0)} |A_{ij}(x,z_0) - A_{ij}(y,z_0)| \, dx \, dy.
$$

The following assumption indicates that $A_{ij}(x,\cdot)$ have small mean oscillations as functions of $x\in\bR^d$.

\begin{assumption}[$\rho$]
								\label{SMO}
There is a constant $R_1 \in (0,1]$ such that $A^{\#}_{R_1} \le \rho$.
\end{assumption}

We also need a continuity assumption on $A_{ij}(\cdot,z)$ as functions of $z\in\bR$.

\begin{assumption}
								\label{Aconti}
There exists a continuous nonnegative function $\omega(r)$ defined on $[0,\infty)$
such that $\omega(0) = 0$ and
$$
|A_{ij}(x_0,z_1) - A_{ij}(x_0,z_2)|
\le \omega\left(|z_1 - z_2|\right)
$$
for all $x_0 \in \bR^d$
and $z_1, z_2 \in \bR$.
\end{assumption}

Set
$$
q^* = \left\{
\begin{aligned}
&\frac{qd}{d-q} \quad &\text{if} \quad q < d,\\
&\text{arbitrary large number} > 1 \quad &\text{if} \quad q \ge d.	
\end{aligned}
\right.
$$
Note that if $q<d$, then $1/q^*=1/q-1/d$.

\begin{theorem}[Optimal global regularity for elliptic equations]
                                        \label{thm2}
Let $u\in \mathring{W}^1_2(\Omega)$ be a weak solution to \eqref{eq3.24}. Suppose in addition that $f\in L_{\sigma}(\Omega)$ and $g\in L_{\tau}(\Omega)$ for some $\sigma\in (d,\infty)$ and $\tau\in (d/2,\infty)$.
Then there exist positive $\beta = \beta(d, \mu, \sigma, \tau)$
and $\rho = \rho(d,\mu, \sigma, \tau)$ such that,
under Assumption \ref{assump2}($\beta$) and Assumption \ref{SMO}($\rho$),
we have
$$
\|u\|_{W^1_p(\Omega)} \le N,
\quad
\text{where}
\quad
p=\min\{\sigma,\tau^*\}>d
$$
and
$N=N(d,\mu,\mu_1,\mu_2,\sigma,\tau,\beta, u, \|f\|_{L_\sigma(\Omega)},
\|g\|_{L_\tau(\Omega)}, R_1, \omega, |\Omega|)$.
Consequently, we have $u\in C^\alpha(\bar\Omega)$ where $\alpha=1-d/p$.
\end{theorem}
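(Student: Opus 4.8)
The plan is to raise the regularity of $u$ to $W^1_p(\Omega)$ with $p>d$ by iterating the $L_p$-solvability theory for \emph{linear} divergence form equations, treating the nonlinearity by freezing the leading coefficient along $u$, and to start the iteration from a power strictly above the energy exponent $2$ supplied by Theorem \ref{thm1}. First I would invoke Theorem \ref{thm1}: since $\sigma>d>2$ and $\tau>d/2>\gamma/(\gamma-1)$, it gives some $p_0>2$ with $u\in W^1_{p_0}(\Omega)$, hence $u\in L_{p_0^*}(\Omega)$ (Sobolev embedding on $\Omega$) and $u\in L_{\gamma p_0/2}(\Omega)$. Using the controlled growth conditions with $\sigma>d$, $\tau>d/2$ (and, below, this higher integrability), I would then show $u\in L_\infty(\Omega)$ and in fact $u\in C^{\alpha_0}(\bar\Omega)$ for some $\alpha_0>0$, by a De Giorgi--Nash--Moser type argument for the scalar equation \eqref{eq3.24} in which the gradient term $|\nabla u|^{2(1-1/\gamma)}$, whose exponent $2(1-1/\gamma)=(d+2)/d$ is strictly less than $2$, is absorbed into the Dirichlet energy. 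The continuity of $u$ is what makes the frozen leading coefficient tractable in what follows.

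Next I would set up the bootstrap. Put $\bar A_{ij}(x):=A_{ij}(x,u(x))$, $\bar a_i(x):=-a_i(x,u(x))$, and $\bar b(x):=b(x,u(x),\nabla u(x))$, so that $u$ weakly solves the linear Dirichlet problem $-D_i(\bar A_{ij}D_ju)=D_i\bar a_i+\bar b$ in $\Omega$, $u=0$ on $\partial\Omega$. By Assumption \ref{Aconti}, on a small ball the mean oscillation of $\bar A$ is bounded by $A^\#_r$ plus $\omega$ evaluated at the oscillation of $u$; hence, first choosing $\rho=\rho(d,\mu,\sigma,\tau)$ small in Assumption \ref{SMO} and then a scale $r$ small (depending on the modulus of continuity of $u$, so ultimately on all the data), $\bar A$ has mean oscillation $\le 2\rho$ down to scale $r$. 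Combined with Assumption \ref{assump2}($\beta$) for a suitably small $\beta=\beta(d,\mu,\sigma,\tau)$, this puts $\bar A$ in the class for which the linear $L_p$-theory of Section \ref{sec7} provides, for every $p\le\min\{\sigma,\tau^*\}$, an estimate $\|u\|_{W^1_p(\Omega)}\le N(\|\bar a\|_{L_p(\Omega)}+\|\bar b\|_{\bH^{-1}_p(\Omega)})$. The engine is then to feed the current regularity into the data and read off a better one: if $u\in W^1_{p_k}(\Omega)$ with $2<p_k<\min\{\sigma,\tau^*\}$, then, using $u\in L_{p_k^*}$, one has $\bar a\in L_{s_k}$ with $s_k=\min\{\sigma,(d-2)p_k/(d-p_k)\}$, while $\bar b$ is a sum of a member of $L_\tau$ and members of $L_{dp_k/(d+2)}$ and $L_{(d-2)p_k^*/(d+2)}$ arising from $|\nabla u|^{2(1-1/\gamma)}$ and $|u|^{\gamma-1}$; since $L_r\hookrightarrow\bH^{-1}_{r^*}$, the linear estimate yields $u\in W^1_{p_{k+1}}(\Omega)$ with $p_{k+1}$ the minimum of $s_k$, $\tau^*$, and the Sobolev conjugates of $dp_k/(d+2)$ and $(d-2)p_k^*/(d+2)$. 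A short computation using only $\gamma=2d/(d-2)$, i.e.\ $(d+2)/d<2$, shows that whenever $p_k>2$ each of these candidates exceeds $p_k$, so $(p_k)$ is strictly increasing, with the corresponding norms bounded in terms of the quantities from the first step (which in the end produces the constant $N$).

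Finally I would check saturation. Being increasing and bounded above by $\min\{\sigma,\tau^*\}$, the sequence $(p_k)$ converges; if its limit were strictly below $\min\{\sigma,\tau^*\}$, then the same exponent algebra would force it to equal $2$, contradicting $p_\infty\ge p_0>2$. Moreover, once $p_k>d$ the powers of $u$ become bounded and the gain from the gradient term raises $p_k$ to $\min\{\sigma,\tau^*\}$ in finitely many more steps. Thus $u\in W^1_p(\Omega)$ with $p=\min\{\sigma,\tau^*\}>d$ and the asserted bound, and the Sobolev embedding $W^1_p(\Omega)\hookrightarrow C^{1-d/p}(\bar\Omega)$ on the Lipschitz domain gives $u\in C^\alpha(\bar\Omega)$ with $\alpha=1-d/p$. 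I expect the hard parts to be: (i) the boundedness and $C^{\alpha_0}$-continuity of $u$ under the \emph{borderline} controlled growth conditions, where with only $u\in W^1_2$ the gradient term would be exactly critical, so that Theorem \ref{thm1} is indispensable; and (ii) the bookkeeping in the bootstrap --- verifying that $\bar A$ remains in the admissible class at every step (which works because $\bar A$ and its scale $r$ are fixed once $u$ is, so a single smallness of $\rho$ and $\beta$, calibrated to $\min\{\sigma,\tau^*\}$, serves for all $k$) and that the exponents strictly increase all the way up to $\min\{\sigma,\tau^*\}$ rather than stalling at an intermediate value.
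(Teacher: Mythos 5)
Your proposal is correct and follows essentially the same route as the paper's (sketched) proof, which mirrors the parabolic argument for Theorem \ref{thm4}: the reverse H\"older inequality of Theorem \ref{thm1} supplies the starting exponent $p_0>2$, boundedness and H\"older continuity of $u$ allow the leading coefficients to be frozen with small mean oscillation via Assumptions \ref{SMO} and \ref{Aconti}, and the linear $W^1_p$-theory with one fixed choice of $\beta,\rho$ drives the bootstrap up to $p=\min\{\sigma,\tau^*\}$, followed by Morrey's embedding. The only inessential differences are that the paper uses the $L_\infty$ bound so that $a_i(x,u)\in L_\sigma$ and essentially only the gradient term needs to be tracked, and it terminates the iteration by an explicit positive lower bound on $p_{k+1}-p_k$ rather than your convergence/fixed-point argument.
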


Now we state our results for the parabolic equation \eqref{eq3.30},
%For the parabolic case,
%as in the elliptic case, it suffices to} impose
assuming the following controlled growth conditions on the lower order terms:
$$
|a_i(t,x,u)| \le \mu_1 (|u|^{\gamma/2} + f),
$$
$$
|b(t,x,u,\nabla u)| \le \mu_2 (|\nabla u|^{2(1-1/\gamma)} + |u|^{\gamma-1} + g),
$$
where
$$
f\in L_2(\cU_T),\quad g\in L^{\frac \gamma {\gamma-1}}(\cU_T),\quad
\gamma=\frac{2(d+2)}{d}.
$$
%$$
%\gamma =\left\{\begin{aligned}
%&\frac{2(n+2)}{n}, \quad n > 2,\\
%&\text{any number in $(2,4)$}, \quad n = 2,\\
%&4, \quad n = 1.
%\end{aligned}\right.$$

Our first main result for parabolic equations is a reverse H\"older's inequality for the following parabolic equation. Note that we do not impose the zero initial condition as in the equation \eqref{eq3.30}.

\begin{equation}							 \label{eq0218}
\left\{
  \begin{aligned}
    u_t-D_i\left(A_{ij}(t,x,u) D_j u + a_i(t,x,u)\right) = b(t,x,u,\nabla u)
 \quad & \text{in} \,\,\cU_T,\\
    u=0 \quad & \text{on} \,\, \partial_l \cU_T.
  \end{aligned}
\right.
\end{equation}

By a weak solution to the above equation we mean $u\in \rV_2(\cU_T)$
such that, for any $\varphi \in \mathring{W}^{1,1}_2(\cU_T)$ and $t \in [0,T]$, we have
\begin{multline*}
\int_{\Omega} u(t,x) \varphi(t,x) \, dx- \int_{\Omega}u(0,x) \varphi(0,x)\,dx\\
= \int_0^t\int_{\Omega} \left[u \varphi_t - A_{ij}(s,x,u) D_ju D_i \varphi
- a_i(s,x,u) D_i \varphi + b(s,x,u,Du) \varphi\right]\,dx\,ds.
\end{multline*}

\begin{theorem}[Reverse H\"older's inequality for parabolic equations]
                                        \label{thm3}
Let $u\in \rV_2(\cU_T)$ be a weak solution to \eqref{eq0218}. Suppose in addition that $f\in L_{\sigma}(\cU_T)$ and $g\in L_{\tau}(\cU_T)$ for some $\sigma\in (2,\infty)$ and $\tau\in (\gamma/(\gamma-1),\infty)$. Then there exists $p>2$ depending only on
$d$, $\mu$, $\mu_1$, $\mu_2$, $\sigma$, $\tau$, $\beta$, $u$, $\|f\|_{L_\sigma(\cU_T)}$ and $\|g\|_{L_\tau(\cU_T)}$,
%$\|u\|_{W_2^{0,1}(\cU_T)}$, $\|f\|_{L_2}$, and $\|g\|_{L_{\gamma/(\gamma-1)}}$,
such that $u\in \cH^1_{p, \text{loc}}(\cU_T)$.
Moreover, for any $0 < \varepsilon < T$, we have
$$
\|u\|_{L_{\gamma p/2}((\varepsilon,T)\times\Omega)}
+ \|Du\|_{L_p((\varepsilon,T)\times\Omega)}\le N,
$$
where $N = N(d,\mu,\mu_1,\mu_2,\sigma,\tau,\beta, u, \|f\|_{L_\sigma(\cU_T)}, \|g\|_{L_\tau(\cU_T)}, \varepsilon, T, |\Omega|)$.
In particular, if the initial condition is zero, one can take $\varepsilon = 0$.
\end{theorem}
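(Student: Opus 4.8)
\emph{Proof plan.} The strategy is to prove a self-improving reverse Hölder inequality for the combined energy density $W:=|Du|^{2}+|u|^{\gamma}$, with the data term $F:=f^{2}+g^{\gamma/(\gamma-1)}$ as forcing, simultaneously on interior parabolic cylinders $Q_{R}(X_{0})\subset\subset\cU_{T}$ and on cylinders $\cU_{R}(X_{0})=\cU_{T}\cap Q_{R}(X_{0})$ abutting the lateral boundary $\partial_l\cU_{T}$, and then to invoke the parabolic Gehring--Giaquinta--Modica lemma. What makes this possible is that $\gamma=\tfrac{2(d+2)}{d}=2+\tfrac4d$ is exactly the parabolic Sobolev exponent of \eqref{eqn:2.2}, and that the extra hypotheses $\sigma>2$, $\tau>\gamma/(\gamma-1)$ put $F$ in $L^{s}$ with $s=\min(\sigma/2,\tau(\gamma-1)/\gamma)>1$. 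Given the reverse Hölder inequality, the lemma yields $W\in L^{1+\delta}_{\text{loc}}$ with $\delta=\delta(d,\mu,\mu_{1},\mu_{2},\sigma,\tau,\beta,u,\|f\|_{L_{\sigma}},\|g\|_{L_{\tau}})>0$, i.e.\ $Du\in L_{p,\text{loc}}$ and $u\in L_{\gamma p/2,\text{loc}}$ with $p:=2(1+\delta)>2$. Finally, $u\in\cH^{1}_{p,\text{loc}}(\cU_{T})$ is read off the equation \eqref{eq0218}: the controlled growth conditions give $A_{ij}D_{j}u+a_{i}\in L_{p,\text{loc}}$ (after shrinking $\delta$ so that $p\le\sigma$) and $b\in L_{q,\text{loc}}$ with $1/q=1/p+1/(d+2)$, while $L_{q}\hookrightarrow\bH^{-1}_{p}$ on bounded cylinders; hence $u_{t}=D_{i}(A_{ij}D_{j}u+a_{i})+b\in\bH^{-1}_{p,\text{loc}}$. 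The quantitative bound on $(\varepsilon,T)\times\Omega$, and its improvement to $\varepsilon=0$ when $u(0,\cdot)=0$, comes out of the Gehring iteration; the criticality of all the exponents at $p=2$ is what makes the scheme balance.

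\emph{Step 1: Caccioppoli estimates.} Fix a scale $R_{*}\in(0,R_{0}]$, let $R\le R_{*}$, and write $X_{0}=(t_{0},x_{0})$. Test \eqref{eq0218} with $\varphi=\eta^{2}(u-c)$, where $\eta$ is a parabolic cutoff equal to $1$ on the inner cylinder, vanishing near the parabolic boundary of $Q_{2R}(X_{0})$, with $|D\eta|\le CR^{-1}$ and $|\eta_{t}|\le CR^{-2}$, and $c=0$ for a boundary cylinder (using $u|_{\partial_l\cU_{T}}=0$) or a suitable spatial mean of $u$ for an interior cylinder. Since $u$ only lies in $\rV_{2}(\cU_{T})$, this is made rigorous through Steklov averages in $t$. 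Extracting $\mu\int|Du|^{2}\eta^{2}$ from the leading term by ellipticity, treating the cross terms with Cauchy--Schwarz, and bounding the lower order terms by the controlled growth conditions and Young's inequality --- $|Du|^{2(1-1/\gamma)}|u-c|\le\epsilon|Du|^{2}+C_{\epsilon}|u-c|^{\gamma}$, $|u|^{\gamma-1}|u-c|\le|u|^{\gamma}+|u-c|^{\gamma}$, $|u|^{\gamma/2}|Du|\le\epsilon|Du|^{2}+C_{\epsilon}|u|^{\gamma}$, $g|u-c|\le\epsilon|u-c|^{\gamma}+C_{\epsilon}g^{\gamma/(\gamma-1)}$, $f|Du|\le\epsilon|Du|^{2}+C_{\epsilon}f^{2}$, together with the analogous estimates for the terms multiplied by $D\eta$, which generate the $R^{-2}|u-c|^{2}$ contribution --- and absorbing the $\epsilon$-terms, one obtains
\begin{multline*}
\sup_{t\in I(\cU_{2R}(X_{0}))}\int_{\Omega_{2R}(x_{0})}\bigl|(u-c)\eta\bigr|^{2}(t,x)\,dx+\int_{\cU_{R}(X_{0})}|Du|^{2}\,dX\\
\le\frac{C}{R^{2}}\int_{\cU_{2R}(X_{0})}|u-c|^{2}\,dX+C\int_{\cU_{2R}(X_{0})}\bigl(|u|^{\gamma}+|u-c|^{\gamma}+F\bigr)\,dX,
\end{multline*}
with $C=C(d,\mu,\mu_{1},\mu_{2})$, and the corresponding statement for interior cylinders.

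\emph{Step 2: reverse Hölder and Gehring.} The right-hand side above is super-critical in $u-c$, so we interpolate. Applying the parabolic embedding \eqref{eqn:2.2} to $(u-c)\eta$ --- which vanishes on the lateral boundary of the cylinder, either through $\eta$ or through $u|_{\partial_l\cU_{T}}=0$ --- bounds $\|(u-c)\eta\|_{L_{2+4/d}}$ by $\tri{(u-c)\eta}$, and the left-hand side of the Caccioppoli inequality dominates $\tri{(u-c)\eta}^{2}$; since $\gamma=2+\tfrac4d$, this controls $\int_{\cU_{R}(X_{0})}|u-c|^{\gamma}$. Combining this with a multiplicative inequality in the spirit of Lemma~\ref{lem5.1} and Hölder's inequality inside the cylinder (subtracting $c$ and re-adding it, its contribution being controlled by $\dashint_{\cU_{2R}(X_{0})}|u|^{\gamma}$), dividing through by $|\cU_{R}(X_{0})|$, and treating the $|Du|^{2}$ part likewise, one arrives at a reverse Hölder inequality for $W$ of the form
\begin{multline*}
\dashint_{\cU_{R}(X_{0})}W\,dX\le C\Bigl(\dashint_{\cU_{2R}(X_{0})}W^{\theta}\,dX\Bigr)^{1/\theta}\\
+\kappa(R_{*})\dashint_{\cU_{2R}(X_{0})}W\,dX+C\dashint_{\cU_{2R}(X_{0})}F\,dX
\end{multline*}
for some $\theta\in(0,1)$, uniformly over all such cylinders (interior or boundary) with $2R\le R_{*}$ not meeting $\{t=0\}$, where $\kappa(R_{*})\to0$ as $R_{*}\to0$ because the super-critical cross terms carry a coefficient bounded by the $L_{\gamma}$-norm of $u$ on the cylinder. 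Choosing $R_{*}$ small enough --- how small depends on $u$, which is the source of the $u$-dependence of $\delta$ and hence of $p$ --- and absorbing the $\kappa(R_{*})$-term on the larger cylinder into the left-hand side by the standard iteration lemma, one obtains a genuine reverse Hölder inequality for $W$ with forcing $F\in L^{s}$, $s>1$, in the \emph{same} form at the boundary and in the interior. The Gehring--Giaquinta--Modica lemma, applied on $(\varepsilon,T)\times\Omega$, then yields $W\in L^{1+\delta}_{\text{loc}}$ there with $\delta=\min(\epsilon_{0},s-1)>0$ and the stated quantitative estimate.

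\emph{Main obstacle.} The restriction away from $\{t=0\}$ is forced by the bottom cylinders, where $\varphi=\eta^{2}(u-c)$ produces a term $\tfrac12\int u(0,x)^{2}\eta^{2}\,dx$ that is uncontrolled unless $u(0,\cdot)=0$ (in which case $\varepsilon=0$ is admissible). The genuine difficulty, however, is the interior Caccioppoli estimate (Proposition~\ref{prop01}), which is considerably harder than its elliptic analogue: one cannot take $c=0$, and subtracting a spatial mean of $u$ over a ball --- needed in order to invoke a Poincaré-type inequality --- makes that mean time-dependent, so $\int u_{t}\,\eta^{2}(u-c)$ no longer collapses to a perfect time derivative and one must control the additional contributions from $\partial_{t}c$ and from the mismatch between the spatial cutoff and the averaging ball, for a function whose time regularity is only that of an element of $\rV_{2}(\cU_{T})$. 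This calls for a careful argument via Steklov averages, a cylinder-adapted choice of mean, and bookkeeping of the time-boundary terms at both ends of the time interval, before the interior estimate can be cast in the scale-invariant form required by the Gehring--Giaquinta--Modica machinery; the boundary estimate, by contrast, is comparatively routine since the Dirichlet condition permits $c=0$ and a direct use of Poincaré's inequality.
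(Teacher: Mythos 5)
Your overall architecture coincides with the paper's (Caccioppoli with $c=0$ at the boundary and a spatial mean in the interior, interpolation via \eqref{eqn:2.2} and Lemma \ref{lem5.1}, absorption of the critical term on small cylinders, then the Gehring--Giaquinta--Modica lemma in the form of Proposition \ref{parareve}), but the plan does not close at precisely the step you yourself flag as the main difficulty: the interior estimate of $\int|u|^{\gamma}$. In the interior the lower-order terms produce $\int|u|^{\gamma}$ itself, not $\int|u-c|^{\gamma}$, and your parenthetical remedy --- ``subtracting $c$ and re-adding it, its contribution being controlled by $\dashint_{\cU_{2R}(X_0)}|u|^{\gamma}$'' --- leaves a full-strength $\dashint|u|^{\gamma}$ on the right-hand side with no small coefficient; such a term cannot be absorbed and destroys the reverse H\"older structure. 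The paper's resolution is to split $|u|^{\gamma}\lesssim|u-\bar u(t)|^{\gamma}+|\bar u(t)-c|^{\gamma}+|c|^{\gamma}$ with $\bar u(t)$ the $\eta^{4}$-weighted spatial mean and $c$ its time average; the constant part is sub-critical by H\"older since $\gamma q/2>1$, but the oscillation $|\bar u(t)-c|$ must be estimated by Poincar\'e in $t$ \emph{together with the equation itself}: one writes $\bar u_t$ as $R^{-d}\int(D_i(A_{ij}D_ju+a_i)+b)\eta^{4}\,dx$, integrates by parts against $\eta^{4}$, and applies H\"older to each resulting term. This is the key missing idea in your plan, and it is also what forces the Gehring functional to be $\Phi=|\nabla u|^{q}+|u|^{\gamma q/2}$ rather than a power of $|\nabla u|$ alone, with the interior inequality acquiring the extra term $(\dashint|u|^{\gamma q/2})^{2/q}$ that the boundary inequality does not have. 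Your observation that $\partial_t\bar u$ spoils the perfect time derivative is, incidentally, not the real issue: choosing the mean weighted by the same cutoff $\eta^{4}$ makes $\int\bar u_t(u-\bar u)\eta^{4}\,dx=0$, so that term does collapse.

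A secondary imprecision: you assert $\kappa(R_*)\to0$ because the critical coefficient is ``bounded by the $L_{\gamma}$-norm of $u$ on the cylinder.'' The coefficient produced by Lemma \ref{lem5.1} is $\bigl(\sup_t\int_{\Omega_R(x_0)}u^{2}(t,\cdot)\,dx\bigr)^{2/d}$, and its smallness \emph{uniformly in} $(t_0,x_0)$ does not follow from $u\in L_{\gamma}$ or from absolute continuity of $\int|u|^{\gamma}$ alone; it requires re-running the energy inequality on the small cylinder (Corollary \ref{cor0201}), i.e., it again uses the equation. This is derivable from your Step 1, but it should be said, since it is exactly what makes $p$ depend on $u$ rather than only on $\|u\|_{V_2}$.
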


\begin{remark}
The statements of Theorems \ref{thm1} and \ref{thm3} are true for elliptic and parabolic systems under the same conditions.
\end{remark}

The assumption below reads that the coefficients $A_{ij}(t,x,z)$ for parabolic equations are merely measurable in $t \in \bR$ and have small mean oscillations in $x\in\bR^d$. We set
$$
\cA_R^{\#} = \sup_{1 \le i,j \le d}\sup_{\substack{(t_0,x_0) \in \bR^{d+1}\\ z_0 \in \bR, r \le R}}\dashint_{\!t_0-r^2}^{\,\,\,t_0}\dashint_{B_r(x_0)}\dashint_{B_r(x_0)} |A_{ij}(s,x,z_0) - A_{ij}(s,y,z_0)| \, dx \, dy \, ds.
$$

\begin{assumption}[$\rho$]
								\label{SMOPara}
There is a constant $R_1 \in (0,1]$ such that $\cA^{\#}_{R_1} \le \rho$.
\end{assumption}

The following is a continuity assumption on $A_{ij}(\cdot,\cdot,z)$ as functions of $z\in\bR$.
\begin{assumption}
								\label{AcontiPara}
There exists a continuous nonnegative function $\omega(r)$ defined on $[0,\infty)$
such that $\omega(0) = 0$ and
$$
|A_{ij}(t_0,x_0,z_1) - A_{ij}(t_0,x_0,z_2)|
\le \omega\left(|z_1 - z_2|\right)
$$
for all $(t_0,x_0) \in \bR^{d+1}$
and $z_1, z_2 \in \bR$.
\end{assumption}

For the parabolic case, set
$$
q^* = \left\{
\begin{aligned}
&\frac{q(d+2)}{d+2-q} \quad &\text{if} \quad q < d+2,\\
&\text{arbitrary large number} > 1 \quad &\text{if} \quad q \ge d+2.	
\end{aligned}
\right.
$$
Note that if $q < d+2$, then $1/\tau^*=1/\tau-1/(d+2)$.

\begin{theorem}[Optimal global regularity for parabolic equations]
                                        \label{thm4}
Let $u\in \mathring{\cH}^1_2(\cU_T)$ be a weak solution to \eqref{eq3.30}. Suppose in addition that $f\in L_{\sigma}(\cU_T)$ and $g\in L_{\tau}(\cU_T)$ for some $\sigma\in (d+2,\infty)$ and $\tau\in (d/2+1,\infty)$.
Then there exist positive $\beta = \beta(d, \mu, \sigma, \tau)$
and $\rho = \rho(d,\mu, \sigma, \tau)$ such that,
under Assumption \ref{assump2}($\beta$) and Assumption \ref{SMOPara}($\rho$),
we have
\begin{equation}
								\label{eq0216}
\|u\|_{\cH^1_p(\cU_T)}\le N,
\quad
\text{where}
\quad
p=\min\{\sigma,\tau^*\}>d+2
\end{equation}
and $N=N(d,\mu,\mu_1,\mu_2,\sigma,\tau,\beta, u,
\|f\|_{L_\sigma(\cU_T)}, \|g\|_{L_{\tau}(\cU_T)}, R_1, \omega, T, |\Omega|)$.
Consequently, we have $u\in C^{\alpha/2,\alpha}(\bar{\cU}_T)$ where $\alpha=1-(d+2)/p$.
\end{theorem}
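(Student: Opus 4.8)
The plan is to run the bootstrap argument of Palagachev \cite{Pala09}, adapted to the parabolic setting, starting from the slightly higher integrability already provided by Theorem \ref{thm3}. Since $u\in\mathring{\cH}^1_2(\cU_T)$ has zero initial data, Theorem \ref{thm3} gives (with $\varepsilon=0$) some $p_0>2$ with $u\in\cH^1_{p_0,\text{loc}}(\cU_T)$ and $\|u\|_{L_{\gamma p_0/2}(\cU_T)}+\|Du\|_{L_{p_0}(\cU_T)}\le N$; in fact, because the boundary estimates of Section \ref{sec4} are in the same form as the interior ones (via Gehring-Giaquinta-Modica), this improved integrability is global, i.e. $u\in\cH^1_{p_0}(\cU_T)$. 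This is the base case of the induction.

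For the inductive step, suppose $u\in\cH^1_q(\cU_T)$ for some $q\in[2,p)$ with $p=\min\{\sigma,\tau^*\}$. I would freeze the nonlinearity: set $\bar A_{ij}(t,x):=A_{ij}(t,x,u(t,x))$, which by Assumptions \ref{SMOPara}($\rho$) and \ref{AcontiPara} (plus the already-established continuity of $u$ on compact subsets once $q$ is large enough, or a standard covering/partition-of-unity argument to control $\omega(\operatorname{osc} u)$ on small cylinders) has small mean oscillation in $x$ and is merely measurable in $t$; and regard $u$ as a weak solution of the \emph{linear} equation $u_t-D_i(\bar A_{ij}D_ju)=D_i F_i+h$ with $F_i=-a_i(t,x,u)$ and $h=b(t,x,u,Du)$. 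The controlled growth conditions give $|F_i|\lesssim |u|^{\gamma/2}+f$ and $|h|\lesssim|Du|^{2(1-1/\gamma)}+|u|^{\gamma-1}+g$. Using $\gamma=\tfrac{2(d+2)}{d}$, the parabolic Sobolev embedding $\cH^1_q\hookrightarrow L_{q^*}$, and Hölder's inequality, one checks that $F\in L_{r}(\cU_T)$ and $h\in\bH^{-1}_{r}(\cU_T)$ (by writing $|u|^{\gamma-1}\in L_{r}$ and $|Du|^{2(1-1/\gamma)}\in L_{r}$ after absorbing powers) for some $r=r(q)>q$ with $r\to\min\{\sigma,\tau^*\}$ as the iteration proceeds; the $L_\sigma$ bound on $f$ and the $L_\tau$ bound on $g$ cap the gain at $p$. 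Then the $\cH^1_p$-theory for divergence-form linear parabolic equations with partially-BMO coefficients on Lipschitz domains with small Lipschitz constant (Section \ref{sec7}, cf. \cite{DK09}) — which requires exactly the smallness of $\beta$ and $\rho$ asserted in the statement — upgrades $u$ to $\cH^1_{r}(\cU_T)$ with the quantitative estimate $\|u\|_{\cH^1_r}\le N(\|F\|_{L_r}+\|h\|_{\bH^{-1}_r}+\|u\|_{\cH^1_2})$. Iterating finitely many times (the exponents increase by a fixed multiplicative gap determined by $1/q^*=1/q-1/(d+2)$) reaches $q=p$, giving \eqref{eq0216}. Finally, $p>d+2$ forces, by the parabolic Sobolev embedding $\cH^1_p(\cU_T)\hookrightarrow C^{\alpha/2,\alpha}(\bar\cU_T)$ with $\alpha=1-(d+2)/p$, the claimed global Hölder continuity; the Lipschitz regularity of $\partial\Omega$ suffices here since we only need Hölder, not $C^1$, boundary behavior.

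The main obstacle, I expect, is the first improvement step — going from $\cH^1_2$ (where the data $F,h$ from the growth conditions are only barely in the natural dual spaces, so the linear theory gives nothing new) to the slightly-higher exponent $p_0$. This is precisely why Theorem \ref{thm3}, i.e. the reverse Hölder inequality, is needed as a separate input rather than derivable from the linear $L_p$-theory: one must use the self-improving property of a Caccioppoli-type (reverse Hölder) estimate via Gehring's lemma, and the genuinely delicate point flagged in the introduction is making the interior parabolic estimate (Proposition \ref{prop01}) and the boundary estimate take the same form so that the Giaquinta-Modica machinery applies. A secondary technical point is controlling $\omega(\operatorname{osc}_{Q_r} u)$ uniformly: before $u$ is known to be continuous one only has $u\in L_{\gamma p_0/2}$, so the freezing argument must be localized to cylinders small enough that the oscillation of $u$ (measured, say, through its higher integrability and a Poincaré-type inequality, or bootstrapped alongside the main induction) keeps $\omega$ below the threshold $\rho$ required by the linear theory — a standard but somewhat fiddly covering argument that has to be woven into each step of the iteration.
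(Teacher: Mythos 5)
Your overall architecture (linearize by freezing $A_{ij}$ along the solution, then bootstrap with the $L_p$-theory of Section \ref{sec7}, starting from the exponent $p_0>2$ supplied by Theorem \ref{thm3}) is the same as the paper's, and the iteration scheme you describe — exponents improved through $1/q^*=1/q-1/(d+2)$ until they hit $\min\{\sigma,\tau^*\}$, capped by the integrability of $f$ and $g$ — matches the paper's recursion $p_{k+1}=(d+2)p_k/(d+4-p_k)$. However, there is a genuine gap in the step you dismiss as a ``secondary technical point'': verifying that the frozen coefficients $a_{ij}(t,x)=A_{ij}(t,x,u(t,x))$ satisfy the small-mean-oscillation hypothesis of the linear theory. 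The mechanisms you propose do not close this. Mean oscillation of $a_{ij}$ is controlled by $\omega$ applied to the oscillation of $u$; with only $Du\in L_{p_0}$ and $u\in L_{\gamma p_0/2}$ for $p_0$ slightly above $2$, a Poincar\'e-type argument gives at best $\dashint_{B_r}\dashint_{B_r}|u(s,x)-u(s,y)|\lesssim r^{1-(d+2)/p_0}\|Du\|_{L_{p_0}}$, which does not tend to zero as $r\to0$ unless $p_0>d+2$ — i.e.\ unless you already have the conclusion of the theorem. ``Bootstrapping the oscillation control alongside the main induction'' is circular for the same reason: you need small oscillation to apply the linear theory at the first step, and you need the output of the linear theory to get small oscillation by this route.

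The paper breaks this circularity with two preliminary lemmas that are absent from your proposal. First (Lemma \ref{lembd}), the integrability $u\in L_{q_1}$ with $q_1>2+4/d$ furnished by Theorem \ref{thm3}, together with the structure conditions \eqref{eq0217}--\eqref{eq0219}, lets one invoke Theorem V.2.1 of \cite{LSU} to conclude $u\in L_\infty(\cU_T)$. Second (Lemma \ref{lemHolder}), the De Giorgi--Nash type Theorem V.1.1 of \cite{LSU} then yields a \emph{global, quantitative} H\"older estimate $|u|_{\nu/2,\nu;\cU_T}\le N$. Only with this in hand does one get $a_R^{\#}\le 2\omega(NR^{\nu})+\cA_R^{\#}\le 2\rho$ for $R\le R_2$, which is what Theorem \ref{liRes} requires. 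Note also that the boundedness of $u$ is used again to place $h_i=a_i(t,x,u)$ in $L_\sigma$ (via $|u|^{\gamma/2}\le N$), which your proposal implicitly needs as well. So the reverse H\"older inequality is used twice — once to seed the bootstrap exponent and once, through the LSU machinery, to establish the a priori H\"older continuity that legitimizes the linearization — and the second use is the one your argument is missing.
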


\begin{remark}
Even if the initial condition of a solution $u$ to the equation \eqref{eq3.30} is not zero, we still have the same result as in \eqref{eq0216}, but with $(\varepsilon,T)\times\Omega$ in place of $\cU_T$.
This follows easily from the proof of Theorem \ref{thm4} together with an appropriate set of cut-off functions with respect to the time variable.
\end{remark}

%We may also consider the mixed-norm case...

\section{Reverse H\"{o}lder's inequality: Parabolic case}
							\label{sec4}

This section is devoted to the proof of Theorem \ref{thm3},
which is a reverse H\"{o}lder's inequality for parabolic equations.
The main ingredients of the proof are a boundary estimate (Proposition \ref{prop02}) and an interior estimate (Proposition \ref{prop01}).
For these estimates we use the following well-known multiplicative inequality; see, for example,  \cite[\S II.3]{LSU} or \cite[\S VI.3]{Lieberman}.
Throughout this section as well as Sections \ref{sec7} and \ref{sec6}, as in Theorem \ref{thm3} we set
$$
\gamma=\frac{2(d+2)}{d}.
$$

\begin{lemma}
                            \label{lem5.1}
For any $R>0$ and $u\in W^1_2(B_R)$, we have
\begin{equation*}
\|u\|_{L_\gamma(B_R)}\le N(d)\|u\|_{L_2(B_R)}^{\frac 2 {d+2}}
(\|Du\|_{L_2(B_R)}+R^{-1}\|u\|_{L_2(B_R)})^{\frac d {d+2}}.
\end{equation*}
\end{lemma}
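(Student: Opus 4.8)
The target is the multiplicative (Gagliardo–Nirenberg type) inequality
$$
\|u\|_{L_\gamma(B_R)}\le N(d)\|u\|_{L_2(B_R)}^{\frac 2{d+2}}\bigl(\|Du\|_{L_2(B_R)}+R^{-1}\|u\|_{L_2(B_R)}\bigr)^{\frac d{d+2}},\qquad \gamma=\frac{2(d+2)}{d}.
$$
Since this is a classical fact (the excerpt even cites \cite[\S II.3]{LSU}, \cite[\S VI.3]{Lieberman}), the proof proposal is a short derivation from the Sobolev/Gagliardo–Nirenberg inequality on $\bR^d$ plus a scaling argument. First I would reduce to the unit ball: set $v(y)=u(Ry)$ for $y\in B_1$, so that $\|v\|_{L_q(B_1)}=R^{-d/q}\|u\|_{L_q(B_R)}$ and $\|Dv\|_{L_2(B_1)}=R^{1-d/2}\|Du\|_{L_2(B_R)}$. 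Plugging these into the claimed inequality, one checks that the powers of $R$ match exactly (the exponent of $R$ on the left is $-d/\gamma=-d/2\cdot\frac{2}{d+2}$... more precisely $-d/\gamma$, and on the right $-\frac{d}{2}\cdot\frac{2}{d+2}+(1-\frac d2)\cdot\frac d{d+2}$, which coincide), so it suffices to prove the case $R=1$.

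For $R=1$, I would invoke the standard multiplicative inequality on a bounded Lipschitz (here, smooth) domain: for $u\in W^1_2(B_1)$,
$$
\|u\|_{L_\gamma(B_1)}\le N(d)\,\|u\|_{W^1_2(B_1)}^{\theta}\,\|u\|_{L_2(B_1)}^{1-\theta},
$$
where $\theta$ is determined by the scaling relation $\frac1\gamma=\theta\bigl(\frac12-\frac1d\bigr)+(1-\theta)\frac12$, giving $\theta=\frac{d}{d+2}$; note $\gamma=\frac{2(d+2)}{d}$ is admissible precisely because it is below (or equal to, when $d=2$ one takes $\gamma$ slightly adjusted, but here the formula gives a finite exponent) the Sobolev exponent $\frac{2d}{d-2}$ for $d\ge3$, and any finite exponent is allowed when $d\le2$. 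The cleanest route is: extend $u$ to a $W^1_2(\bR^d)$ function supported in $B_2$ with norm controlled by $\|u\|_{W^1_2(B_1)}$ (Stein/Calderón extension on the smooth domain $B_1$), apply the Gagliardo–Nirenberg inequality on $\bR^d$ in the form $\|w\|_{L_\gamma(\bR^d)}\le N\|Dw\|_{L_2(\bR^d)}^{\theta}\|w\|_{L_2(\bR^d)}^{1-\theta}$, and then observe that after the extension $\|Dw\|_{L_2}+\|w\|_{L_2}\le N(\|Du\|_{L_2(B_1)}+\|u\|_{L_2(B_1)})$; interpolating and using $a^\theta b^{1-\theta}\le a+b$ type bounds where needed yields $\|u\|_{L_\gamma(B_1)}\le N\|u\|_{L_2(B_1)}^{\frac2{d+2}}(\|Du\|_{L_2(B_1)}+\|u\|_{L_2(B_1)})^{\frac d{d+2}}$. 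Undoing the scaling $v\mapsto u$ reinstates the factor $R^{-1}$ on the lower-order term, which is exactly the stated form.

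The main (minor) obstacle is bookkeeping: making sure the interpolation exponent $\theta=\frac d{d+2}$ and the power $1-\theta=\frac2{d+2}$ come out correctly from the dimensional-analysis identity for $\frac1\gamma$, and checking that the $R$-powers balance so that the scaling reduction is exact rather than merely up to an $R$-dependent constant. There is no serious analytic difficulty here — the result is standard and I would simply cite \cite[\S II.3]{LSU} for the statement on balls and present the scaling reduction as the only real content of the proof.
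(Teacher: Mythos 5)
Your derivation is correct: the exponent $\theta=\frac{d}{d+2}$ and the $R$-powers both check out, and the extension-plus-Gagliardo--Nirenberg route is exactly the standard argument behind this inequality. The paper itself gives no proof, simply citing \cite[\S II.3]{LSU} and \cite[\S VI.3]{Lieberman}, so your write-up supplies the same classical argument those references contain.
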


We also use the following cut-off functions in the proofs below.
Let $\eta_0\in C_0^\infty(B_1)$ and $\zeta_0\in C_0^\infty(-1,1)$ be functions satisfying $0 \le \eta_0,\zeta_0 \le 1$, $\eta_0 \equiv 1$ on $B_{1/2}$ and $\zeta_0 \equiv 1$ on $(-1/2,1/2)$. Let $R\in (0,1]$, $x_0\in \bar\Omega$ and $t_0\in (0,T)$. Define
\begin{equation}
								\label{eq0201}
\eta(x)=\eta_0(R^{-1}(x-x_0)),\quad \zeta(t)=\zeta_0(R^{-2}(t-t_0)).								 
\end{equation}
Recall that
%,  throughout the paper, we write
$\cU_r (X) =  \cU_T \cap Q_r (X)$. % as defined in \eqref{eq0517}.
First we prove the following energy type inequality.

\begin{lemma}
							\label{lemm0201}
Let $u\in \rV_2(\cU_T)$ be a weak solution to \eqref{eq0218}, and $f\in L_2(\cU_T)$ and $g\in L_{\frac{\gamma}{\gamma-1}}(\cU_T)$.
Then, for any $x_0 \in \bR^{d}$, $0 < R \le R_0$ and $t_0 \in [R^2 ,T]$,
\begin{multline}
								\label{eq0202}
\sup_{t \in (t_0-R^2,t_0)}\int_{\Omega_R(x_0)} u^2(t,x) \eta^4(x) \zeta^2(t) \, dx
+ \mu \int_{\cU_R(X_0)} |\nabla u|^2 \eta^4 \zeta^2
\\
\le N\Big(R^{-2} \int_{\cU_R(X_0)} |u|^2\eta^2 \zeta
+ \int_{\cU_R(X_0)} |u|^{\gamma}
+ \int_{\cU_R(X_0)}(|f|^2+|g|^{\frac \gamma {\gamma-1}})\Big),	
\end{multline}
where  $X_0 = (t_0,x_0)$ and $N = N(d,\mu,\mu_1,\mu_2)$.
\end{lemma}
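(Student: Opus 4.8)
The plan is to test the weak formulation of \eqref{eq0218} with the function $\varphi=u\eta^4\zeta^2$, where $\eta$ and $\zeta$ are the cut-offs in \eqref{eq0201}. Since $u\in\rV_2(\cU_T)$ need not have $u_t\in L_2$, this $\varphi$ is not literally admissible in the given weak formulation, so I would first regularize $u$ in time (Steklov averages $u_h(t,x)=h^{-1}\int_t^{t+h}u(s,x)\,ds$, or a smooth mollification), test with $u_h\eta^4\zeta^2$, and let $h\to0$ at the end; this is a routine approximation step. Because $\zeta_0$ is supported in $(-1,1)$ and $t_0\ge R^2$, one has $\zeta(0)=\zeta_0(-t_0/R^2)=0$, hence $\varphi(0,\cdot)=0$ and the term $\int_{\Omega}u(0,x)\varphi(0,x)\,dx$ in the weak formulation drops out. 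Fixing $t_1\in(t_0-R^2,t_0)$, running the (averaged) identity on the interval $(0,t_1)$, using $D_i\varphi=D_iu\,\eta^4\zeta^2+4u\,\eta^3D_i\eta\,\zeta^2$, and integrating the parabolic term by parts via $\int_0^{t_1}\!\!\int u\,u_t\,\eta^4\zeta^2=\tfrac12\int_{\Omega}u^2\eta^4\zeta^2\big|_{t=t_1}-\int_0^{t_1}\!\!\int u^2\eta^4\zeta\zeta'$, one arrives at
\begin{multline*}
\tfrac12\int_{\Omega_R(x_0)}\!\! u^2(t_1,\cdot)\,\eta^4\zeta^2(t_1)\,dx+\int_0^{t_1}\!\!\int_{\Omega_R(x_0)}\!\!A_{ij}D_juD_iu\,\eta^4\zeta^2 \\
=\int_0^{t_1}\!\!\int_{\Omega_R(x_0)}\!\!\Big(u^2\eta^4\zeta\zeta'-4A_{ij}D_ju\,u\,\eta^3D_i\eta\,\zeta^2-a_iD_i\varphi+b\,\varphi\Big),
\end{multline*}
where the domain $(t_0-R^2,t_1)\times\Omega_R(x_0)$ of integration is contained in $\cU_R(X_0)$.

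The second step is to estimate the right-hand side. By uniform ellipticity $A_{ij}D_juD_iu\,\eta^4\zeta^2\ge\mu|\nabla u|^2\eta^4\zeta^2$, which I keep on the left. For the other terms I use $|\nabla\eta|\le NR^{-1}$, $|\zeta'|\le NR^{-2}$, $0\le\eta,\zeta\le1$, $|A_{ij}|\le\mu^{-1}$, the controlled growth bounds on $a_i$ and $b$, and Young's inequality with the exponents forced by those bounds: I would write $|\nabla u|^{2(1-1/\gamma)}|u|=(|\nabla u|^2)^{(\gamma-1)/\gamma}(|u|^\gamma)^{1/\gamma}$ and split it with conjugate exponents $\tfrac{\gamma}{\gamma-1}$ and $\gamma$, split $|g||u|$ and $|u|^{\gamma-1}\cdot|u|=|u|^\gamma$ the same way, and split $|u|^{\gamma/2}|u|$ and $|f||u|$ with exponents $2,2$. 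This gives: the $A_{ij}$ cross term $\le\varepsilon\int|\nabla u|^2\eta^4\zeta^2+NR^{-2}\int|u|^2\eta^2\zeta$; the $a_i$ term $\le\varepsilon\int|\nabla u|^2\eta^4\zeta^2+NR^{-2}\int|u|^2\eta^2\zeta+N\int(|u|^\gamma+|f|^2)$; the $b$ term $\le\varepsilon\int|\nabla u|^2\eta^4\zeta^2+N\int(|u|^\gamma+|g|^{\gamma/(\gamma-1)})$; and $\int u^2\eta^4\zeta\zeta'\le NR^{-2}\int|u|^2\eta^2\zeta$, with all integrals over $\cU_R(X_0)$ since $\eta\zeta$ is supported in $Q_R(X_0)$. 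Taking $\varepsilon$ small and absorbing the $\int|\nabla u|^2\eta^4\zeta^2$ terms into the left-hand side, then taking the supremum over $t_1\in(t_0-R^2,t_0)$ in the first term and $t_1\uparrow t_0$ in the second, yields \eqref{eq0202}.

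I do not expect any deep obstacle here; the argument is mostly bookkeeping. The one structural point worth stressing is that the exponents $\gamma/2$, $2(1-1/\gamma)$, $\gamma-1$ in the controlled growth conditions are precisely the scaling-critical ones, so that after Young's inequality every lower-order contribution splits into a piece controlled by $|\nabla u|^2\eta^4\zeta^2$ (absorbed) and pieces controlled by $\int|u|^\gamma$, $\int|f|^2$, $\int|g|^{\gamma/(\gamma-1)}$, with no leftover term of intermediate homogeneity. The second point to be careful about is the choice of the cut-off weight $\eta^4\zeta^2$ rather than $\eta^2\zeta$: differentiating $\eta^4$ produces a factor $\eta^3\nabla\eta$, so that after Young's inequality the $|u|^2$ terms appear with the lower powers $\eta^2$ and $\zeta$, exactly as in the right side of \eqref{eq0202} --- and this small gain in the power of the cut-off is what makes Gehring's lemma applicable later. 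The only genuine technicality is the justification of the test function, i.e.\ the Steklov/mollification step, but this is standard for weak solutions in $\rV_2$.
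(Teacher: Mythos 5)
Your proposal is correct and follows essentially the same route as the paper: testing with $u\eta^4\zeta^2$ (rigorously via Steklov averages), deriving the energy identity, and estimating each lower-order term with Young's inequality using exactly the same exponent splittings, then absorbing the $\varepsilon\int|\nabla u|^2\eta^4\zeta^2$ pieces and taking the supremum in time. Your explicit remarks on the vanishing of the initial term (since $\zeta(0)=0$ for $t_0\ge R^2$) and on why the weight $\eta^4\zeta^2$ is chosen are consistent with, and slightly more detailed than, the paper's argument.
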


\begin{proof}
As a test function, multiply both sides by $u(t,x) \eta^4(x) \zeta^2(t)$.\footnote{To be rigorous, here one needs to take the Steklov average of $u$ and then pass to the limit.}
Then
\begin{align*}
&\frac 1 2\int_{\Omega_R(x_0)} u^2(t,x) \eta^4(x) \zeta^2(t) \, dx
+\int_{t_0-R^2}^t\int_{\Omega_R(x_0)} A_{ij}(s,x,u) D_j u D_i(u \eta^4 \zeta^2)\\
&\,=\int_{t_0-R^2}^t\int_{\Omega_R(x_0)} b(s,x,u,\nabla u) u \eta^4 \zeta^2
- \int_{t_0-R^2}^t\int_{\Omega_R(x_0)} a_i(s,x,u) D_i(u \eta^4 \zeta^2)\\
&\,\,\,+\int_{t_0-R^2}^t\int_{\Omega_R(x_0)} u^2 \eta^4 \zeta \zeta_t.
\end{align*}
From this we obtain
\begin{align*}
&\frac 1 4\sup_{t \in (t_0-R^2,t_0)}\int_{\Omega_{R}(x_0)} u^2(t,x) \eta^4(x) \zeta^2(t) \, dx
+ \frac \mu 2 \int_{\cU_R(X_0)} |\nabla u|^2 \eta^4 \zeta^2\\
&\,\le 4\int_{\cU_R(X_0)} |A_{ij}| |\nabla u| |u| \eta^3 |\nabla \eta| \zeta^2
+ \int_{\cU_R(X_0)} |a_i| |\nabla u| \eta^4 \zeta^2\\
&\,\,\,+ 4\int_{\cU_R(X_0)} |a_i| |u| |\nabla \eta| \eta^3 \zeta^2
+ \int_{\cU_R(X_0)} |b| |u| \eta^4 \zeta^2
+ \int_{\cU_R(X_0)} u^2 \eta^4 |\zeta_t| \zeta\\
&\,:= J_1 + J_2 + J_3 + J_4 + J_5.
\end{align*}
We estimate $J_i$, $i=1,\cdots,5$ by using Young's inequality as follows.

\noindent
Estimate of $J_1$:
$$
J_1 \le \frac \mu {16} \int_{\cU_R(X_0)} |\nabla u|^2 \eta^4 \zeta^2
+ N \int_{\cU_R(X_0)} |u|^2 |\nabla \eta|^2 \eta^2 \zeta^2.
$$

\noindent
Estimate of $J_2$:
\begin{align*}
J_2 &\le N\int_{\cU_R(X_0)} |u|^{\gamma/2} |\nabla u| \eta^4 \zeta^2
+ N\int_{\cU_R(X_0)} |f| |\nabla u| \eta^4 \zeta^2\\
&\le \frac \mu {16} \int_{\cU_R(X_0)} |\nabla u|^2 \eta^4 \zeta^2
+ N \int_{\cU_R(X_0)} |u|^{\gamma} \eta^4 \zeta^2
+ N \int_{\cU_R(X_0)} |f|^2 \eta^4 \zeta^2.
\end{align*}

\noindent
Estimate of $J_3$:
\begin{align*}
J_3 &\le N\int_{\cU_R(X_0)} |u|^{\gamma/2} |u||\nabla \eta| \eta^3 \zeta^2
+ N\int_{\cU_R(X_0)} |f| |u| |\nabla \eta| \eta^3 \zeta^2\\
&\le N\int_{\cU_R(X_0)} |u|^{\gamma} \eta^4 \zeta^2
+ N\int_{\cU_R(X_0)} |u|^2 |\nabla \eta|^2 \eta^2 \zeta^2
+ N\int_{\cU_R(X_0)} |f|^2 \eta^4 \zeta^2.
\end{align*}

\noindent
Estimate of $J_4$:
\begin{align*}
J_4 &\le N\int_{\cU_R(X_0)} |\nabla u|^{2(1-1/\gamma)} |u| \eta^4 \zeta^2
+ N\int_{\cU_R(X_0)} |u|^{\gamma} \eta^4 \zeta^2
+ N\int_{\cU_R(X_0)} |g| |u|  \eta^4 \zeta^2\\
&\le \frac \mu {16}\int_{\cU_R(X_0)} |\nabla u|^2 \eta^4 \zeta^2
+ N\int_{\cU_R(X_0)} |u|^{\gamma}
+ N\int_{\cU_R(X_0)} |g|^{\frac \gamma {\gamma-1}}.
\end{align*}

By combining the above estimates
and using the fact that $\zeta^2 \le \zeta$, we obtain
\eqref{eq0202}.
\end{proof}

As a consequence of the above inequality we prove that
$\|u(t,\cdot)\|_{L_2(\Omega_R(x))} \to 0$ uniformly in $(t,x)$ as $R \to 0$.
Especially, if $f\in L_{\sigma}(\cU_T)$ and $g\in L_{\tau}(\cU_T)$ for some $\sigma\in (2,\infty)$ and $\tau\in (\gamma/(\gamma-1),\infty)$,
by H\"{o}lder's inequality it follows that
the smallness of $\|u(t,\cdot)\|_{L_2(\Omega_R(x))}$ depends only on $u$, $\|f\|_{L_\sigma(\cU_T)}$, $\|g\|_{L_\tau(\cU_T)}$, and $R$.

\begin{corollary}
								\label{cor0201}
Under the same assumptions as in Lemma \ref{lemm0201}, we have								
\begin{equation}
                    \label{eq20.52}
\sup_{t \in (t_0-R^2/4,t_0)}\int_{\Omega_{R/2}(x_0)} u^2(t,x)\, dx\to0\quad \text{as}\,\,R\to 0
\end{equation}
uniformly in $t_0$ and $x_0$.
\end{corollary}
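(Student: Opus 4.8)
The plan is to read everything off from the energy inequality \eqref{eq0202}. Fix $R \le R_0$, $x_0 \in \bR^d$, and $t_0 \in [R^2, T]$ (this is the range in which Lemma \ref{lemm0201} applies; for $R$ small it exhausts every $t_0$ bounded away from $0$, which is all that is needed later, and the convergence below is uniform over this range). Since $\eta \equiv 1$ on $B_{R/2}(x_0)$ and $\zeta \equiv 1$ on $(t_0 - R^2/4, t_0)$, the left-hand side of \eqref{eq0202} dominates $\sup_{t \in (t_0 - R^2/4,\, t_0)} \int_{\Omega_{R/2}(x_0)} u^2(t,x)\,dx$. Thus it suffices to show that each of the three terms on the right-hand side of \eqref{eq0202} tends to $0$ as $R \to 0$, uniformly in $t_0$ and $x_0$.

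For the $f$--$g$ term this is immediate: $f \in L_2(\cU_T)$ and $g \in L_{\gamma/(\gamma-1)}(\cU_T)$ give $|f|^2 + |g|^{\gamma/(\gamma-1)} \in L_1(\cU_T)$, while $|\cU_R(X_0)| \le |Q_R| = N(d) R^{d+2} \to 0$ uniformly in $(t_0,x_0)$, so absolute continuity of the Lebesgue integral does the job. Likewise, $\gamma = 2 + 4/d$ together with the imbedding \eqref{eqn:2.2} shows $u \in L_\gamma(\cU_T)$, hence $|u|^\gamma \in L_1(\cU_T)$ and $\int_{\cU_R(X_0)} |u|^\gamma \to 0$ uniformly for the same reason.

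The one term requiring care is $R^{-2} \int_{\cU_R(X_0)} |u|^2 \eta^2 \zeta$, where the factor $R^{-2}$ must be absorbed. Since $0 \le \eta^2 \zeta \le 1$, I would bound this by $R^{-2} \int_{\cU_R(X_0)} |u|^2$ and apply H\"older's inequality with exponent $\gamma/2 > 1$:
\[
\int_{\cU_R(X_0)} |u|^2 \le \Big(\int_{\cU_R(X_0)} |u|^\gamma\Big)^{2/\gamma} |\cU_R(X_0)|^{1 - 2/\gamma}.
\]
The point --- and the place where the precise value of $\gamma$ enters --- is that $1 - 2/\gamma = 2/(d+2)$, so that $|\cU_R(X_0)|^{1-2/\gamma} \le N(d) R^2$ and therefore $R^{-2}\int_{\cU_R(X_0)} |u|^2 \eta^2 \zeta \le N(d)\big(\int_{\cU_R(X_0)} |u|^\gamma\big)^{2/\gamma}$, which again goes to $0$ uniformly in $(t_0,x_0)$ as $R \to 0$ because $|u|^\gamma \in L_1(\cU_T)$. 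Combining the three bounds yields \eqref{eq20.52}.

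I expect this last point to be the only real obstacle: at first glance the $R^{-2}$ in front of the lower-order $L_2$-term is worrisome, but the exponent $\gamma = 2(d+2)/d$ is exactly the critical parabolic Sobolev exponent, so the scaling is borderline and the $R^{-2}$ is precisely cancelled by the $R^2$ coming from the volume of the small cylinder. If in addition $f \in L_\sigma$ and $g \in L_\tau$ with $\sigma > 2$ and $\tau > \gamma/(\gamma-1)$, the same H\"older estimates give explicit rates such as $\|f\|_{L_\sigma(\cU_T)}^2 |\cU_R(X_0)|^{1-2/\sigma}$, making the modulus of smallness depend quantitatively on $R$ and only on $u$, $\|f\|_{L_\sigma(\cU_T)}$, and $\|g\|_{L_\tau(\cU_T)}$, as noted in the remark preceding the statement.
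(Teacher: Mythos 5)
Your proof is correct and is essentially the paper's own argument: the paper likewise bounds $R^{-2}\int_{\cU_R(X_0)}|u|^2$ by $N\bigl(\int_{\cU_R(X_0)}|u|^\gamma\bigr)^{2/\gamma}$ via H\"older (exploiting $1-2/\gamma=2/(d+2)$), plugs this into \eqref{eq0202}, and concludes by $u\in L_\gamma(\cU_T)$ from \eqref{eqn:2.2} together with the absolute continuity of the Lebesgue integral. No gaps; your version just spells out the uniformity and the exponent bookkeeping in more detail.
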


\begin{proof}
By H\"older's inequality
$$
R^{-2} \int_{\cU_R(X_0)} |u|^2\le N\left(\int_{\cU_R(X_0)} |u|^\gamma\right)^{\frac 2 \gamma}.
$$
From this and the inequality \eqref{eq0202} we have
\begin{multline*}
\sup_{t \in (t_0-R^2/4,t_0)}\int_{\Omega_{R/2}(x_0)} u^2(t,x)\, dx\\
\le N\int_{\cU_R(X_0)}\left( |u|^{\gamma}
+ |f|^2+ |g|^{\frac {\gamma}{\gamma-1}}\right)+N\left(\int_{\cU_R(X_0)} |u|^\gamma\right)^{\frac 2 \gamma}.
\end{multline*}
Then \eqref{eq20.52} follows from the assumptions on $u$, $f$, $g$, and \eqref{eqn:2.2}, as well as the absolute continuity of Lebesgue integrals.
\end{proof}

\subsection{Boundary case}
We now derive a reverse H\"older's inequality for solutions to \eqref{eq0218}
on a parabolic cylinder whose spatial center is located at the boundary of $\Omega$.

\begin{proposition}
								\label{prop02}
Let $u\in \rV_2(\cU_T)$ be a weak solution to \eqref{eq0218},
and $f\in L_2(\cU_T)$ and $g\in L_{\frac{\gamma}{\gamma-1}}(\cU_T)$.
Then, for any $X_0 =(t_0,x_0) \in \bR^{d+1}$ and $0 < R \le R_0$ such that $x_0 \in \partial\Omega$
and $t_0 \ge  R^2 $,
$$
\dashint_{\cU_{R/2}(X_0)} (|\nabla u|^2+|u|^\gamma)
\le N \left(\dashint_{\cU_R(X_0)}|\nabla u|^{q}\right)^{2/q}+N\dashint_{\cU_R(X_0)}(|f|^2+|F|^2)
$$
$$
\qquad+N\sup_{t \in (t_0-R^2,t_0)} \left(\int_{\Omega_{R}(x_0)} |u|^2 \, dx\right)^{\frac{2}{d}} \left(\dashint_{\cU_R(X_0)} |\nabla u|^2\right),
$$
where $q=\frac{2(d+2)}{d+4}\in (1,2)$,
$F = |g|^{\frac12\frac{\gamma}{\gamma-1}}$, and $N = N(d,\mu,\mu_1,\mu_2,\beta)$.	
\end{proposition}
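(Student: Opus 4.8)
\emph{Plan of proof.}
The inequality is an energy (Caccioppoli) estimate upgraded, via Sobolev‑type inequalities together with the controlled growth conditions, into a self‑improving reverse‑Hölder estimate; the point of keeping the explicit $\sup_t$‑term is that its coefficient tends to $0$ as $R\to0$ by Corollary \ref{cor0201}. Throughout, fix cutoffs $\eta=\eta(x)$, $\zeta=\zeta(t)$ as in \eqref{eq0201}, adapted to the pair of radii $R/2\prec R$, so that $\eta\equiv1$ on $B_{R/2}(x_0)$ with $\abs{\nabla\eta}\le N/R$, and $\zeta\equiv1$ on the time interval of $\cU_{R/2}(X_0)$ with $\abs{\zeta_t}\le N/R^2$. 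First (Step 1, energy inequality) I would test the weak formulation of \eqref{eq0218} with $u\eta^4\zeta^2$ — legitimate since $u\in\rV_2(\cU_T)$ vanishes on $\partial_l\cU_T$, after passing to Steklov averages in $t$ — and invoke the controlled growth conditions on $a_i,b$ and Young's inequality (using $2(1-1/\gamma)=2/q$, the conjugate exponent $\gamma$, and $\abs g^{\gamma/(\gamma-1)}=\abs F^2$). This is exactly Lemma \ref{lemm0201} at the radii $R/2,R$:
\[
\sup_{t\in(t_0-R^2,t_0)}\int_{\Omega_{R/2}(x_0)}u^2\,dx+\int_{\cU_{R/2}(X_0)}\abs{\nabla u}^2
\le N\Big(R^{-2}\!\!\int_{\cU_R(X_0)}\!\!u^2+\int_{\cU_R(X_0)}\!\!\abs u^\gamma+\int_{\cU_R(X_0)}\!(\abs f^2+\abs F^2)\Big).
\]

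Next (Step 2, the $\abs u^\gamma$ terms) I would apply the multiplicative inequality of Lemma \ref{lem5.1} on $B_R(x_0)$ to $u(t,\cdot)$, extended by zero outside $\Omega$ (possible since $u\in\rV_2$), for a.e.\ $t$; because $\tfrac{2\gamma}{d+2}=\tfrac4d$ and $\tfrac{d\gamma}{d+2}=2$, raising to the power $\gamma$ and integrating in $t$ gives
\[
\int_{\cU_R(X_0)}\abs u^\gamma\le N\Big(\sup_{t}\int_{\Omega_R(x_0)}u^2\Big)^{2/d}\Big(\int_{\cU_R(X_0)}\abs{\nabla u}^2+R^{-2}\!\!\int_{\cU_R(X_0)}\!\!u^2\Big),
\]
and likewise with $R/2$ in place of $R$. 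Dividing by $\abs{\cU_R}\sim R^{d+2}$, the first factor produces precisely the last term of the claimed inequality, and the second term is absorbed into $R^{-2}\dashint u^2$ once $R$ is small enough that $\sup_t(\int_{\Omega_R}u^2)^{2/d}\le1$.

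The heart of the matter is Step 3, where $R^{-2}\dashint_{\cU_R}u^2$ must be replaced by a power of $\dashint_{\cU_R}\abs{\nabla u}^q$. I would write, by Hölder, $\norm{u}_{L_2(\cU_{R/2})}\le\norm{u}_{L_\gamma(\cU_{R/2})}^{1/2}\norm{u}_{L_q(\cU_{R/2})}^{1/2}$, valid because $\tfrac1q+\tfrac1\gamma=1$. For the $L_q$ factor, the spatial Poincaré inequality for functions vanishing on $\partial\Omega\cap B_R(x_0)$ — available with a constant depending only on $d,\beta$ since $x_0\in\partial\Omega$ and Assumption \ref{assump2}($\beta$) forces $\abs{B_R(x_0)\setminus\Omega}$ to be a fixed fraction of $\abs{B_R(x_0)}$ — applied slicewise and integrated in $t$ gives $\norm{u}_{L_q(\cU_R)}\le NR\norm{\nabla u}_{L_q(\cU_R)}$. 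For the $L_\gamma$ factor, the parabolic imbedding \eqref{eqn:2.2} applied to $u\zeta$ (which lies in $\rV_2$ of a cylinder, with $\nabla(u\zeta)=\zeta\nabla u$) gives $\norm{u}_{L_\gamma(\cU_{R/2})}\le N\tri{u\zeta}\le N\big(\int_{\cU_R}\abs{\nabla u}^2+\sup_t\int_{\Omega_R}u^2\big)^{1/2}$. The exponent $q=\tfrac{2(d+2)}{d+4}$ is forced by $\tfrac1q=\tfrac12+\tfrac1{d+2}$, i.e.\ $q$ is the parabolic Sobolev conjugate of $2$ in "dimension" $d+2$; this makes all powers of $R$ cancel, and Young's inequality then yields, for every $\delta\in(0,1)$,
\[
R^{-2}\dashint_{\cU_R(X_0)}u^2\le N\Big(\dashint_{\cU_R(X_0)}\abs{\nabla u}^q\Big)^{2/q}+\frac N\delta\dashint_{\cU_R(X_0)}(\abs f^2+\abs F^2)+\delta\Big(\dashint_{\cU_R(X_0)}\abs{\nabla u}^2+\frac1{\abs{\cU_R}}\sup_t\int_{\Omega_R}u^2\Big).
\]

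Finally (Step 4, conclusion) I would insert Steps 2 and 3 into Step 1, use $\abs{\cU_{R/2}}\sim\abs{\cU_R}$ to pass between averages over the two cylinders, and absorb the leftover error terms $\delta\,\dashint_{\cU_R}\abs{\nabla u}^2$ and $\delta\,\abs{\cU_R}^{-1}\sup_t\int_{\Omega_R}u^2$ by re‑running Steps 1–2 on a slightly larger cylinder and choosing $\delta$ small — a routine dyadic iteration/absorption argument. The main obstacle is precisely Step 3 together with this last bookkeeping: recognizing that at the boundary the estimate genuinely collapses to a \emph{scale‑invariant} parabolic Sobolev–Poincaré inequality at the critical exponent $q$ (so the eventual Gehring exponent does not degenerate), and carrying the $\sup_t\int_{\Omega_R}u^2$ correction through to the clean form without spoiling the smallness furnished by Corollary \ref{cor0201}; everything else is standard.
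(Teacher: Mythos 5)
Your plan is essentially the paper's proof: test with $u\eta^4\zeta^2$ to get the energy inequality (this is Lemma \ref{lemm0201} verbatim), control the $|u|^\gamma$ term by the multiplicative inequality of Lemma \ref{lem5.1}, and upgrade the $R^{-2}\int u^2$ term to $\bigl(\dashint_{\cU_R}|\nabla u|^q\bigr)^{2/q}$ using the boundary Sobolev--Poincar\'e inequality at the exponent $q=\tfrac{2(d+2)}{d+4}$ together with Young's inequality. The one organizational difference is in how the absorbed piece is produced: you interpolate $\|u\|_{L_2}^2\le\|u\|_{L_\gamma}\|u\|_{L_q}$ and then must absorb $\delta\dashint_{\cU_R}|\nabla u|^2$ and $\delta|\cU_R|^{-1}\sup_t\int_{\Omega_R}u^2$, which live on the \emph{larger} cylinder, so you are forced into the radius-iteration lemma of Step 4. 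The paper instead performs the H\"older splitting slicewise as $\int|u|^2\eta^2\zeta\le\bigl(\int|u|^2\eta^4\zeta^2\bigr)^{2/(d+4)}\bigl(\int|u|^2\bigr)^{(d+2)/(d+4)}$, keeping the cutoffs in the first factor; after Sobolev--Poincar\'e on the second factor and Young's inequality, the small piece is $\tfrac{\mu}{16N_0}\sup_t\int|u|^2\eta^4\zeta^2$, which carries exactly the same weights as the left-hand side of the energy inequality and is absorbed immediately, with no change of radii and no iteration. Your route can be made to work (the standard iteration lemma, e.g.\ in \cite{Gi83}, applies once all estimates are redone for arbitrary intermediate radii $r<s$ with cutoff gradients of size $(s-r)^{-1}$), but it is genuinely more machinery than the paper needs.

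Two concrete points in your write-up would need repair. First, in Step 2 you dispose of the cross term $\bigl(\sup_t\int_{\Omega_R}u^2\bigr)^{2/d}R^{-2}\int u^2$ by demanding $R$ small enough that $\sup_t\bigl(\int_{\Omega_R}u^2\bigr)^{2/d}\le1$; that makes the admissible range of $R$ (equivalently the constant) depend on $u$, whereas Proposition \ref{prop02} is asserted for all $0<R\le R_0$ with $N=N(d,\mu,\mu_1,\mu_2,\beta)$, and Corollary \ref{cor0201} is reserved for the proof of Theorem \ref{thm3}, not used here. The fix is the one the paper uses (and you yourself use in Step 3): since $x_0\in\partial\Omega$ and the zero-extended $u$ vanishes on a fixed fraction of $B_R(x_0)$, the slicewise boundary Poincar\'e inequality gives $R^{-2}\int_{B_R}|u|^2\le N(d,\beta)\int_{B_R}|\nabla u|^2$, so the multiplicative inequality yields directly $\int_{\cU_R}|u|^\gamma\le N\sup_t\bigl(\int_{\Omega_R}|u|^2\bigr)^{2/d}\int_{\cU_R}|\nabla u|^2$. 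Second, in Step 3 the imbedding \eqref{eqn:2.2} applied to $u\zeta$ with only a temporal cutoff does not localize in space: it bounds $\|u\zeta\|_{L_\gamma}$ over all of $\Omega$, not over $\Omega_R(x_0)$. You need the spatial cutoff as well (so $\nabla(u\eta\zeta)$ produces an extra $R^{-1}\|u\|_{L_2}$ term, handled again by Poincar\'e), or simply the slicewise multiplicative inequality of Lemma \ref{lem5.1} as in the paper. Both are minor, but as written Step 2 proves a weaker statement than the proposition and Step 3 has a non-localized estimate.
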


\begin{proof}
We use Lemma \ref{lemm0201}. Denote by $I_1$, $I_2$, and $I_3$ the three terms on the right-hand side of the inequality \eqref{eq0202}, the estimates of which are obtained as follows.
First, note that we only need to deal with $I_1$ and $I_2$ terms.

\noindent
Estimate of $I_2$: We extend $u$ to be zero outside $\cU_T$. By Lemma \ref{lem5.1} and Poincar\'e's inequality
\begin{align*}
I_{2}&= \int_{t_0-R^2}^{t_0} \int_{B_{R}(x_0)} |u|^{\gamma}\,dx\,dt\\
&\le N\int_{t_0-R^2}^{t_0} \left(\int_{B_{R}(x_0)} |u|^2 \, dx\right)^{\frac{2}{d}}
\left(\int_{B_{R}(x_0)} |\nabla u|^2+R^{-2} | u|^2\, dx\right)\, dt\\
&\le N\sup_{t \in (t_0-R^2,t_0)} \left(\int_{\Omega_{R}(x_0)} |u|^2 \, dx\right)^{\frac{2}{d}}
\int_{\cU_R(X_0)} |\nabla u|^2 \, dx \, dt,
\end{align*}
where $N = N(d,\beta)$.

\noindent
Estimate of $I_1$: By H\"older's inequality and the Sobolev-Poincar\'e inequality,
\begin{align*}
I_1&=R^{-2} \int_{\cU_R(X_0)} |u|^2\eta^2 \zeta\\
&\le R^{-2}\int_{t_0-R^2}^{t_0} \left(\int_{B_{R}(x_0)} |u|^2 \eta^4 \zeta^2 \, dx\right)^{\frac{2}{d+4}}
\left(\int_{B_{R}(x_0)} |u|^2 \, dx\right)^{\frac{d+2}{d+4}} \, dt\\
&\le N R^{-q}\int_{t_0-R^2}^{t_0} \left(\int_{\Omega_{R}(x_0)} |u|^2 \eta^4 \zeta^2 \, dx\right)^{\frac{2}{d+4}}
\left(\int_{\Omega_{R}(x_0)} |\nabla u|^{q} \, dx\right) \, dt\\
&\le \frac \mu {16N_0} \sup_{t \in (t_0-R^2,t_0)} \int_{\Omega_{R}(x_0)} |u|^2 \eta^4 \zeta^2 \, dx+N\left(R^{-q}\int_{\cU_R(X_0)}|\nabla u|^{q}\right)^{\frac {2}{q}},
\end{align*}
where $N = N(d,\beta)$ and
$$
q=\frac{2(d+2)}{d+4}\in (1,2).
$$
Therefore, we have
\begin{align*}
\mu \int_{\cU_R(X_0)} |\nabla u|^2 \eta^4 \zeta^2
&\le R^{d+2}\left(\dashint_{\cU_R(X_0)}|\nabla u|^{q}\right)^{2/q}+N\int_{\cU_R(X_0)}(|f|^2+|F|^2)\\
&\,\,+N\sup_{t \in (t_0-R^2,t_0)} \left(\int_{\Omega_{R}(x_0)} |u|^2 \, dx\right)^{\frac{2}{d}}
\left(\int_{\cU_R(X_0)} |\nabla u|^2\right),
\end{align*}
where
$$
F =
|g|^{\frac12\frac{\gamma}{\gamma-1}}.
$$
This together with the estimate of $I_2$ yields
the inequality in the proposition.
\end{proof}

\subsection{Interior case}
This subsection is devoted to an interior version of Proposition \ref{prop02}, the proof of which is in fact more involved.

\begin{proposition}
								\label{prop01}
Let $u\in \rV_2(\cU_T)$ be a weak solution to \eqref{eq0218},
and $f\in L_2(\cU_T)$ and $g\in L_{\frac{\gamma}{\gamma-1}}(\cU_T)$.
Then, for any $X_0 \in \bR^{d+1}$ and $0 < R \le 1$ such that $B_R(x_0) \subset \Omega$
and $t_0 \ge R^2 $,
$$
\dashint_{Q_{R/2}(X_0)} (|\nabla u|^2+|u|^\gamma)
\le N \left(\dashint_{Q_R(X_0)}|\nabla u|^{q}+|u|^{\frac {\gamma q} 2}\right)^{2/q}
+N\dashint_{Q_R(X_0)}(|f|^2+|F|^2)
$$
$$
+N\sup_{t \in (t_0-R^2,t_0)} \left(\int_{B_{R}(x_0)} |u|^2 \, dx\right)^{\frac{2}{d}} \left(\dashint_{Q_R(X_0)} |\nabla u|^2\right),
$$
where
$$q=\frac {2(d+2)} {d+4},\quad F = |g|^{\frac12\frac{\gamma}{\gamma-1}},
$$ and
$N = N(d,\mu,\mu_1,\mu_2,\|u\|_{V_2(\cU_T)},\|f\|_{L_2(\cU_T)},
\|g\|_{L_{\gamma/(\gamma-1)}(\cU_T)})$.	
\end{proposition}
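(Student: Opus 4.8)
The strategy is to parallel the proof of the boundary estimate, Proposition~\ref{prop02}; the one genuinely new point is that Poincar\'e's inequality is no longer at our disposal on $B_R(x_0)$ since $u$ has no vanishing there, and this is compensated by working with $u$ minus a suitable local average. Fix $X_0$, $R$, and the cut-offs $\eta,\zeta$ of \eqref{eq0201}, and put $c=\dashint_{Q_R(X_0)}u$. Repeating the computation of Lemma~\ref{lemm0201} (after Steklov averaging, as in its footnote) but testing with $(u-c)\eta^4\zeta^2$ in place of $u\eta^4\zeta^2$ — which is legitimate because $c$ is constant, so $D_ju=D_j(u-c)$ and $u_t(u-c)$ is still a total time derivative — and using $|u|^{\gamma/2}\le N(|u-c|^{\gamma/2}+|c|^{\gamma/2})$, $|u|^{\gamma-1}\le N(|u-c|^{\gamma-1}+|c|^{\gamma-1})$ in the growth conditions, one obtains on $B_R(x_0)\subset\Omega$
\begin{multline*}
\sup_{t\in(t_0-R^2,t_0)}\int_{B_R}|u-c|^2\eta^4\zeta^2
+\mu\int_{Q_R}|\nabla u|^2\eta^4\zeta^2\\
\le N\Big(R^{-2}\int_{Q_R}|u-c|^2\eta^2\zeta+\int_{Q_R}|u-c|^\gamma+|c|^{\gamma}|Q_R|+\int_{Q_R}(|f|^2+|F|^2)\Big),
\end{multline*}
where $|F|^2=|g|^{\gamma/(\gamma-1)}$. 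The term $|c|^\gamma|Q_R|$ is handled once and for all by Jensen's inequality: since $\gamma q/2\ge1$,
\[
|c|\le\dashint_{Q_R}|u|\le\Big(\dashint_{Q_R}|u|^{\gamma q/2}\Big)^{2/(\gamma q)},
\qquad\text{so}\qquad
|c|^\gamma|Q_R|\le|Q_R|\Big(\dashint_{Q_R}|u|^{\gamma q/2}\Big)^{2/q}.
\]
After dividing by $|Q_{R/2}|$ this is precisely the new term $N(\dashint_{Q_R}|u|^{\gamma q/2})^{2/q}$ appearing in the statement; it is the only part of the conclusion not already present in Proposition~\ref{prop02}.

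The term $\int_{Q_R}|u-c|^\gamma$ is estimated as the term $I_2$ in Proposition~\ref{prop02}, applying Lemma~\ref{lem5.1} slice by slice to $u-c$: since $\gamma=2(d+2)/d$ we have $2\gamma/(d+2)=4/d$, $d\gamma/(d+2)=2$, and $\nabla(u-c)=\nabla u$, so integrating in $t$ and pulling the supremum out gives
\[
\int_{Q_R}|u-c|^\gamma\le N\sup_t\Big(\int_{B_R}|u-c|^2\Big)^{2/d}\Big(\int_{Q_R}|\nabla u|^2+R^{-2}\int_{Q_R}|u-c|^2\Big).
\]
Here $\sup_t(\int_{B_R}|u-c|^2)^{2/d}\le N\sup_t(\int_{B_R}|u|^2)^{2/d}$, because $|c|^2|B_R|\le|B_R||Q_R|^{-1}\int_{Q_R}|u|^2=R^{-2}\int_{Q_R}|u|^2\le\sup_t\int_{B_R}|u|^2$; thus this quantity is on the one hand bounded by $N\|u\|_{V_2(\cU_T)}^{4/d}$ and on the other hand yields, after normalization, the last term of the statement, while the residual $\sup_t(\int_{B_R}|u-c|^2)^{2/d}R^{-2}\int_{Q_R}|u-c|^2$ is dominated by $NR^{-2}\int_{Q_R}|u-c|^2$ and absorbed into the treatment of that term.

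It remains to estimate $R^{-2}\int_{Q_R}|u-c|^2$, the analog of the term $I_1$ in Proposition~\ref{prop02}: H\"older's inequality in $x$ splits off the factor $(\int_{B_R}|u-c|^2\eta^4\zeta^2)^{2/(d+4)}$, which sits on the left-hand side of the energy inequality above; the Sobolev--Poincar\'e inequality applied to $u-\dashint_{B_R}u(t,\cdot)$ — using $q=2(d+2)/(d+4)$ and $2/q=(d+4)/(d+2)$, so that $\int_{B_R}|\nabla u|^q$ enters to the first power — reduces the remaining factor to $\int_{B_R}|\nabla u|^q$ plus a correction coming from the difference of the two averages $c$ and $\dashint_{B_R}u(t,\cdot)$; then, exactly as in Proposition~\ref{prop02}, one pulls the supremum out of the time integral and applies Young's inequality with exponents $(d+4)/2$ and $(d+4)/(d+2)=2/q$, producing a reabsorbable multiple of $\sup_t\int_{B_R}|u-c|^2\eta^4\zeta^2$ together with $N(R^{-q}\int_{Q_R}|\nabla u|^q)^{2/q}$. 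Collecting the estimates, moving the supremum terms to the left, dividing by $|Q_{R/2}|$, and using $\eta\equiv1$ on $B_{R/2}$ and $\zeta\equiv1$ on $(t_0-R^2/4,t_0)$ yields the asserted bound for $\dashint_{Q_{R/2}}|\nabla u|^2$; the $|u|^\gamma$ part of the left-hand side then follows by applying Lemma~\ref{lem5.1} to $(u-c)\eta^2\zeta$ and invoking the Caccioppoli estimate just obtained. I expect the main obstacle to be this last step: without Poincar\'e the quantity $R^{-2}\int_{Q_R}|u-c|^2$ cannot be absorbed entirely into gradient terms, so one must carefully manage the interplay of the two local averages and keep track of the powers of $R$ — which is harmless only because the constant $N$ is allowed to depend on $\|u\|_{V_2(\cU_T)}$, $\|f\|_{L_2(\cU_T)}$, and $\|g\|_{L_{\gamma/(\gamma-1)}(\cU_T)}$.
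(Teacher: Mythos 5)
Your reduction runs in parallel with the paper's up to a point, but it stalls exactly at the step you flag at the end, and that step is the actual content of the interior estimate, so as written there is a genuine gap. Subtracting the constant space--time average $c=\dashint_{Q_R}u$ makes the $u_t$ term harmless, but it destroys the slice-wise Sobolev--Poincar\'e inequality: for each fixed $t$ one only has
$$
\|u(t,\cdot)-c\|_{L_2(B_R)}\le \|u(t,\cdot)-\bar u_0(t)\|_{L_2(B_R)}+|B_R|^{1/2}\,|\bar u_0(t)-c|,
\qquad \bar u_0(t):=\dashint_{B_R}u(t,\cdot)\,dx,
$$
and the second term, the oscillation in time of the spatial averages, is not controlled by any spatial gradient quantity; it enters both your $I_1$-analog $R^{-2}\int_{Q_R}|u-c|^2$ and your $I_2$-analog $\int_{Q_R}|u-c|^\gamma$ (through the $R^{-2}\int_{B_R}|u-c|^2$ inside Lemma \ref{lem5.1}). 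You acknowledge a ``correction coming from the difference of the two averages'' but never estimate it, and bounding it crudely through $\|u\|_{V_2(\cU_T)}$ is not admissible: the resulting term does not carry the factor $R^{d+2}$ (equivalently, does not survive division by $|Q_{R/2}|$ with the right scaling), so it cannot be absorbed and cannot be fed into the Gehring--Giaquinta--Modica lemma afterwards. The allowed dependence of $N$ on $\|u\|_{V_2}$, $\|f\|_{L_2}$, $\|g\|_{L_{\gamma/(\gamma-1)}}$ is used in the paper only to bound \emph{excess powers} of quantities that already appear with the correct scaling, not to swallow an unscaled term.

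The paper's proof resolves precisely this point, and differently from your plan: it tests with $u-\bar u(t)$, where $\bar u(t)$ is the $\eta^4$-weighted spatial average on the slice, so that the time-derivative term is still exact (because $\int_{B_R}(u-\bar u(t))\eta^4\,dx=0$) and Poincar\'e applies slice by slice to $u-\bar u(t)$; the time oscillation is then isolated in the single term $\int_{Q_R}|\bar u(t)-c|^\gamma$ with $c=\dashint\bar u(t)\,dt$, which is estimated by the one-dimensional Poincar\'e inequality in $t$ together with the equation itself: one writes $\int_{B_R}u_t\,\eta^4\,dx$ via the PDE, integrates by parts, and estimates the resulting six terms by H\"older, using the uniform bounds on $\int|\nabla u|^2$, $\int|u|^\gamma$, $\int|f|^2$, $\int|g|^{\gamma/(\gamma-1)}$. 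That equation-based step is what produces the extra $\left(\dashint_{Q_R}|u|^{\gamma q/2}\right)^{2/q}$ on the right-hand side (your Jensen bound for $|c|^\gamma|Q_R|$ only recovers part of it) and is the source of the stated dependence of $N$. To repair your argument you would have to supply exactly this ingredient: control $\int|\bar u_0(t)-c|^2$ (and the $\gamma$-power analog) through $\bar u_0'(t)$ computed from the equation; without it the proposal does not close.
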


\begin{proof}
Take the same $\eta$ and $\zeta$ as in \eqref{eq0201}.	
Note that
$\int_{B_R(x_0)} \eta^4 \, dx$ is comparable to the volume of $B_R$,
$$
\int_{B_R(x_0)} \eta^4 \, dx = N_1 R^d
$$
for some constant $N_1$ independent of $R$.
As a test function, multiply both sides by $(u-\bar{u}(t)) \eta^4(x) \zeta^2(t)$,
where
$$
\bar{u}(t) = \left(\int_{B_R(x_0)}\eta^4\,dx\right)^{-1}\int_{B_{R}(x_0)} u \eta^4 \, dx
= \frac 1 {N_1} \dashint_{B_R(x_0)}u \eta^4 \, dx.
$$
Since
$$
\int_{B_R(x_0)} u_t (u-\bar u(t))\eta^4\zeta^2\,dx=\int_{B_R(x_0)} (u_t-\bar u(t)_t) (u-\bar u(t))\eta^4\zeta^2\,dx,
$$
we get
\begin{align*}
\frac 1 2&\int_{B_R(x_0)} (u-\bar u(t))^2 \eta^4(x) \zeta^2(t) \, dx
+ \int_{t_0-R^2}^{t} \int_{B_R(x_0)} A_{ij}(s,x,u) D_j u D_i((u-\bar u(t)) \eta^4 \zeta^2)\\
&=  \int_{t_0-R^2}^{t} \int_{B_R(x_0)} b(s,x,u,\nabla u)(u-\bar u(t)) \eta^4 \zeta^2
- a_i(s,x,u) D_i((u-\bar u(t)) \eta^4 \zeta^2)\\
&\,\,\,+  \int_{t_0-R^2}^{t} \int_{B_R(x_0)} (u-\bar u(t))^2 \eta^4 \zeta \zeta_t.
\end{align*}
From this,
\begin{align*}
&\frac 1 4\sup_{t \in (t_0-R^2,t_0)}\int_{B_R(x_0)} (u-\bar u(t))^2(t,x) \eta^4(x) \zeta^2(t) \, dx
+ \frac \mu 2 \int_{Q_R(X_0)} |\nabla u|^2 \eta^4 \zeta^2\\
&\le
4\int_{Q_R(X_0)} |A_{ij}| |\nabla u| |u-\bar u(t)| \eta^3 |\nabla \eta| \zeta^2
+ \int_{Q_R(X_0)} |a_i| |\nabla u| \eta^4 \zeta^2\\
&\,\,\,+ 4\int_{Q_R(X_0)} |a_i| |u-\bar u(t)| |\nabla \eta| \eta^3 \zeta^2
+ \int_{Q_R(X_0)} |b| |u-\bar u(t)| \eta^4 \zeta^2
+ \int_{Q_R(X_0)} (u-\bar u(t))^2 \eta^4 |\zeta_t| \zeta\\
&:= J_1 + J_2 + J_3 + J_4 + J_5.
\end{align*}
Again we estimate each term by using Young's inequality.

\noindent
Estimate of $J_1$:
$$
J_1 \le \frac \mu {16} \int_{Q_R(X_0)} |\nabla u|^2 \eta^4 \zeta^2
+ N \int_{Q_R(X_0)} |u-\bar u(t)|^2 |\nabla \eta|^2 \eta^2 \zeta^2.
$$

\noindent
Estimate of $J_2$:
\begin{align*}
J_2 &\le N\int_{Q_R(X_0)} |u|^{\gamma/2} |\nabla u| \eta^4 \zeta^2
+ N\int_{Q_R(X_0)} |f| |\nabla u| \eta^4 \zeta^2\\
&\le \frac \mu {16} \int_{Q_R(X_0)} |\nabla u|^2 \eta^4 \zeta^2
+ N \int_{Q_R(X_0)} |u|^{\gamma} \eta^4 \zeta^2
+ N \int_{Q_R(X_0)} |f|^2 \eta^4 \zeta^2.
\end{align*}

\noindent
Estimate of $J_3$:
\begin{align*}
J_3 &\le N\int_{Q_R(X_0)} |u|^{\gamma/2} |u-\bar u(t)||\nabla \eta| \eta^3 \zeta^2
+ N\int_{Q_R(X_0)} |f| |u-\bar u(t)| |\nabla \eta| \eta^3 \zeta^2\\
&\le N\int_{Q_R(X_0)} |u|^{\gamma} \eta^4 \zeta^2
+ N\int_{Q_R(X_0)} |u-\bar u(t)|^2 |\nabla \eta|^2 \eta^2 \zeta^2
+ N\int_{Q_R(X_0)} |f|^2 \eta^4 \zeta^2.
\end{align*}

\noindent
Estimate of $J_4$:
\begin{align*}
J_4 &\le N\int_{Q_R(X_0)} |\nabla u|^{2(1-1/\gamma)} |u-\bar u(t)| \eta^4 \zeta^2
+ N\int_{Q_R(X_0)} |u|^{\gamma-1} |u-\bar u(t)|\\
&\quad+ N\int_{Q_R(X_0)} |g| |u-\bar u(t)|\\
&\le \frac \mu {16}\int_{Q_R(X_0)} |\nabla u|^2 \eta^4 \zeta^2
+ N\int_{Q_R(X_0)} (|u|^\gamma+|u-\bar u(t)|^{\gamma})
+ N\int_{Q_R(X_0)} |g|^{\frac \gamma {\gamma-1}}.
\end{align*}
Hence
\begin{align*}
&\sup_{t \in (t_0-R^2,t_0)}\int_{B_R(x_0)} (u-\bar u(t))^2 \eta^4 \zeta^2 \, dx
+ \mu \int_{Q_R(X_0)} |\nabla u|^2 \eta^4 \zeta^2\\
&\,\,\le NR^{-2} \int_{Q_R(X_0)} |u-\bar u(t)|^2\eta^2 \zeta
+ N\int_{Q_R(X_0)} (|u|^{\gamma}+|u-\bar u(t)|^{\gamma})\\
&\quad+ N\int_{Q_R(X_0)}(|f|^2+|g|^{\frac \gamma {\gamma-1}})\\
&\,\,:= N_0(I_1 + I_2 + I_3).
\end{align*}
Note that
\begin{align*}
\int_{B_R(x_0)}& |u-\bar{u}(t)|^{2} \, dx
= \int_{B_R(x_0)} \left|u(t,x) - \frac 1 {N_1} \dashint_{B_R(x_0)} u(t,y) \eta^4(y) \, dy \right|^{2} \, dx\nonumber\\
&= \int_{B_R(x_0)} \left|\frac 1 {N_1} \dashint_{B_R(x_0)} u(t,x) \eta^4(y) \, dy - \frac 1 {N_1} \dashint_{B_R(x_0)} u(t,y) \eta^4(y) \, dy \right|^{2} \, dx\nonumber\\
           %                     \label{eq22.45}
&\le N_2 \int_{B_R(x_0)} \dashint_{B_R(x_0)} |u(t,x) - u(t,y)|^{2} \, dy \, dx.
\end{align*}
Therefore, the term $I_1$ can be estimated exactly as before by using the Sobolev-Poincar\'e inequality instead of the boundary Sobolev-Poincar\'e inequality:
$$
I_1
\le \frac \mu {16N_0} \sup_{t \in (t_0-R^2,t_0)} \int_{B_{R}(x_0)} |u-\bar u(t)|^2 \eta^4 \zeta^2 \, dx+N\left(R^{-q}\int_{Q_R(X_0)}|\nabla u|^{q}\right)^{\frac {2}{q}},
$$
where
$$
q=\frac{2(d+2)}{d+4}\in (1,2).
$$

The only difference is in the estimate of $I_2$, which we focus on below.
First, we note that, by the triangle inequality,
$$
I_2\le N\int_{Q_R(X_0)}|u-\bar u(t)|^\gamma\eta^4\zeta^2+N\int_{Q_R(X_0)}|\bar u(t)|^\gamma\eta^4\zeta^2:=I_{21}+I_{22}.
$$
We estimate $I_{21}$ in the same way as the term $I_2$ in the boundary case:
\begin{align*}
I_{21}&\le N\sup_{t \in (t_0-R^2,t_0)} \left(\int_{B_{R}(x_0)} |u-\bar u(t)|^2 \, dx\right)^{\frac{2}{d}}
\int_{Q_R(X_0)} |\nabla u|^2 \, dx \, dt\\
&\le N\sup_{t \in (t_0-R^2,t_0)} \left(\int_{B_{R}(x_0)} |u|^2 \, dx\right)^{\frac{2}{d}}
\int_{Q_R(X_0)} |\nabla u|^2 \, dx \, dt.
\end{align*}
For $I_{22}$, by the triangle inequality we have
$$
I_{22}\le N\int_{Q_R(X_0)}|\bar u(t)-c|^\gamma%\eta^4\zeta^2
+NR^{d+2}c^\gamma:=I_{221}+I_{222},
$$
where
$$
c=\dashint_{\!\!t_0-R^2}^{\,\,\,t_0} \bar u(t)\,dt.
$$
The estimate of $I_{222}$ is simple: since $\gamma q/2>1$, by H\"older's inequality,
$$
I_{222}\le NR^{d+2}\left(\dashint_{Q_R(X_0)}|u|\right)^\gamma\le NR^{d+2}\left(\dashint_{Q_R(X_0)}|u|^{\gamma q/2}\right)^{2/q}.
$$
To estimate $I_{221}$, we use Poincar\'e's inequality in $t$ to get
$$
I_{221}\le NR^{d+2} \left(\int_{t_0-R^2}^{t_0}|\bar u_t|\,dt\right)^\gamma
\le NR^{d+2} \left(\int_{t_0-R^2}^{t_0} R^{-d} \left|\int_{B_R(x_0)} u_t(t,x)\eta^4\,dx\right|\,dt\right)^\gamma.
$$
It follows from the equation that
$$
I_{221}\le NR^{d+2} J^\gamma,
$$
where
$$
J=\int_{t_0-R^2}^{t_0}R^{-d}\left|\int_{B_R(x_0)} (D_i( A_{ij}(t,x,u) D_j u + a_i(t,x,u)) + b(t,x,u,\nabla u))\eta^4\,dx\right|.
$$
Integrating by parts gives
\begin{align*}
J&\le
NR^{2}\dashint_{Q_R(X_0)} |A_{ij}| |\nabla u|  \eta^3 |\nabla \eta|
+ NR^{2}\dashint_{Q_R(X_0)} |a_i| |\nabla \eta| \eta^3
+ NR^{2}\dashint_{Q_R(X_0)} |b|\eta^4\\
&\le NR \dashint_{Q_R(X_0)}(|\nabla u|+|u|^{\gamma/2}+|f|)+NR^{2}\dashint_{Q_R(X_0)}(|\nabla u|^{2(1-1/\gamma)} + |u|^{\gamma-1} + |g|)\\
&:=N(J_6+J_7+J_8+J_9+J_{10}+J_{11}).
\end{align*}
Now it remains to use H\"older's inequality on each term as follows:
\begin{align*}
J_6&=R \dashint_{Q_R(X_0)}|\nabla u|\le R \left(\dashint_{Q_R(X_0)}|\nabla u|^q\right)^{\frac {2}{\gamma q}}
\left(\dashint_{Q_R(X_0)}|\nabla u|^{2}\right)^{\frac 1 2-\frac 1 {\gamma}}\\
&=\left(\dashint_{Q_R(X_0)}|\nabla u|^q\right)^{\frac {2}{\gamma q}}
\left(\int_{Q_R(X_0)}|\nabla u|^{2}\right)^{\frac 1 2-\frac 1 {\gamma}},
\end{align*}
\begin{align*}
J_7&=R\dashint_{Q_R(X_0)}|u|^{\gamma/2}\le R \left(\dashint_{Q_R(X_0)}|u|^{\gamma q/2}\right)^{\frac 2 {\gamma q}}\left(\dashint_{Q_R(X_0)}|u|^{\gamma}\right)^{\frac 1 2-\frac 1 \gamma}\\
&=\left(\dashint_{Q_R(X_0)}|u|^{\frac {\gamma q}2}\right)^{\frac 2 {\gamma q}}\left(\int_{Q_R(X_0)}|u|^{\gamma}\right)^{\frac 1 2-\frac 1 \gamma},
\end{align*}
$$
J_8=R\dashint_{Q_R(X_0)}|f|\le R\left(\dashint_{Q_R(X_0)}|f|^2\right)^{\frac 1 2}\le
\left(\dashint_{Q_R(X_0)}|f|^2\right)^{\frac 1 {\gamma}}
\left(\int_{Q_R(X_0)}|f|^2\right)^{\frac  1 2-\frac 1 {\gamma}},
$$
\begin{align*}
J_9&=R^{2}\dashint_{Q_R(X_0)}|\nabla u|^{2(1-1/\gamma)}
\le R^2\left(\dashint_{Q_R(X_0)}|\nabla u|^q\right)^{\frac 2 {\gamma q}}
\left(\dashint_{Q_R(X_0)}|\nabla u|^2\right)^{1 -\frac 2 \gamma}\\
&=\left(\dashint_{Q_R(X_0)}|\nabla u|^q\right)^{\frac 2 {\gamma q}}
\left(\int_{Q_R(X_0)}|\nabla u|^2\right)^{1 -\frac 2 \gamma},
\end{align*}
\begin{align*}
J_{10}&=R^{2}\dashint_{Q_R(X_0)}|u|^{\gamma-1}
\le R^2\left(\dashint_{Q_R(X_0)}|u|^{\frac {\gamma q}2}\right)^{\frac 2 {\gamma q}}
\left(\dashint_{Q_R(X_0)}|u|^{\gamma}\right)^{1-\frac 2 {\gamma}}\\
&=\left(\dashint_{Q_R(X_0)}|u|^{\frac {\gamma q} 2}\right)^{\frac 2 {\gamma q}}
\left(\int_{Q_R(X_0)}|u|^{\gamma}\right)^{1-\frac 2 {\gamma}},
\end{align*}
\begin{align*}
J_{11}&=R^2\dashint_{Q_R(X_0)}|g|\le R^2\left(\dashint_{Q_R(X_0)}|g|^{\frac {\gamma} {\gamma-1}}\right)^{\frac {\gamma-1} \gamma}\\
&\le
\left(\dashint_{Q_R(X_0)}|g|^{\frac {\gamma} {\gamma-1}}\right)^{\frac 1 {\gamma}}
\left(\int_{Q_R(X_0)}|g|^{\frac {\gamma} {\gamma-1}}\right)^{1-\frac 2  \gamma}.
\end{align*}
Note that by the assumptions on $u$, $f$ and $g$, and \eqref{eqn:2.2},
$$
\int_{Q_R(X_0)} |\nabla u|^2,\quad
\int_{Q_R(X_0)} |f|^2,
\quad
\int_{Q_R(X_0)} |g|^{\frac{\gamma}{\gamma-1}},\quad
\int_{Q_R(X_0)}|u|^\gamma
$$
are uniformly bounded.
Thus by combining the estimates above together, we get
$$
\mu \dashint_{Q_R(X_0)} |\nabla u|^2 \eta^4 \zeta^2
\le N\left(\dashint_{Q_R(X_0)}|\nabla u|^{q}+|u|^{\frac {\gamma q} 2}\right)^{\frac 2 q}+N\dashint_{Q_R(X_0)}(|f|^2+|F|^2)
$$
$$
+N\sup_{t \in (t_0-R^2,t_0)} \left(\int_{B_{R}(x_0)} |u|^2 \, dx\right)^{\frac{2}{d}} \dashint_{Q_R(X_0)} |\nabla u|^2,
$$
where
$$
F = %R^{\beta}\left( \int_{Q_R} |g|^{\frac{\gamma}{\gamma-1}} \right)^{\frac12\left(1-\frac{2}{\gamma}\right)}
|g|^{\frac12\frac{\gamma}{\gamma-1}}.
$$
and $N$ depends on $d, \mu, \mu_1,\mu_2$ as well as
$\|u\|_{V_2(\cU_T)}$, $\|f\|_{L_2(\cU_T)}$ and $\|g\|_{L_{\gamma/(\gamma-1)}(\cU_T)}$.
This together with the estimate of $I_2$ yields
the inequality in the proposition.
\end{proof}

\subsection{Proof of Theorem \ref{thm3}}

To prove Theorem \ref{thm3}, we use the boundary and interior estimate proved in the previous subsections as well as the following result, which is a version of Proposition 1.3 in \cite{GiaStr}; also see Chapter V in \cite{Gi83}.
\begin{proposition}
								\label{parareve}
Let $\Phi \ge 0$ in $Q = (0,T) \times \Omega$ and satisfies with some constant $r > 1$
$$
\dashint_{Q_R(X_0)} \Phi^r \le N_0 \left(\dashint_{Q_{8R}(X_0)} \Phi \right)^r + N_0 \dashint_{Q_{8R}(X_0)} \Psi^r + \theta \dashint_{Q_{8R}(X_0)} \Phi^r
$$
for every $X_0 \in Q$ and $Q_{8R}(X_0) \subset Q$,
where $\theta \in (0,1)$,
then $\Phi \in L_{\text{loc}}^p(Q)$ for $p \in [r, r+\kappa)$
and
$$
\left( \dashint_{Q_R(X_0)} \Phi^p \right)^{1/p}
\le N \left( \dashint_{Q_{8R}(X_0)} \Phi^r \right)^{1/r} + \left( \dashint_{Q_{8R}(X_0)} \Psi^{p} \right)^{1/p}
$$
for all $Q_{8R} (X_0)\subset Q$,
where $N$ and $\kappa$ depends only on $d$, $r$, $\theta$, and $N_0$.
\end{proposition}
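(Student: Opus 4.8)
The plan is to prove Proposition~\ref{parareve} as the parabolic counterpart of Gehring's self-improving integrability lemma, following the Calder\'on--Zygmund/stopping-time argument of \cite[Proposition~1.3]{GiaStr} (see also \cite[Chapter~V]{Gi83}) with Euclidean balls replaced throughout by the parabolic cylinders $Q_\rho$. These cylinders form a doubling family for the metric $\abs{\,\cdot\,}_p$ and carry a Vitali-type covering lemma, so the Euclidean proof transfers with only geometric bookkeeping; the one place that needs a little care is that $Q_\rho(Y)=(t-\rho^2,t)\times B_\rho(y)$ lies in the past of its center, so the covering has to be set up with a small forward enlargement (equivalently, with two-sided cylinders), a routine point handled in \cite{GiaStr}. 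I fix $Q_{8R}(X_0)\subset Q$ and, after translating, take $X_0=0$; I also fix the target exponent $p=r+\varepsilon$ with $\varepsilon\in(0,\kappa)$, $\kappa$ to be determined.

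The first step is a stopping-time decomposition of the super-level sets. Put $\lambda_0^r:=N_1\dashint_{Q_{8R}(0)}(\Phi^r+\Psi^r)$ with $N_1=N_1(d)$ large, and fix $\lambda>\lambda_0$. For a.e.\ $Y\in\{\Phi>\lambda\}\cap Q_R(0)$, Lebesgue differentiation gives $\dashint_{Q_s(Y)}\Phi^r\to\Phi(Y)^r>\lambda^r$ as $s\to0^+$, while $\dashint_{Q_s(Y)}\Phi^r<\lambda^r$ once $Q_s(Y)$ is as large as a fixed multiple of $Q_{8R}(0)$ allows, by the choice of $N_1$. By continuity of $s\mapsto\dashint_{Q_s(Y)}\Phi^r$ there is a smallest $s_Y$ with $\dashint_{Q_{s_Y}(Y)}\Phi^r=\lambda^r$; set $\rho_Y:=s_Y/8$, so that
\begin{equation*}
\dashint_{Q_{\rho_Y}(Y)}\Phi^r>\lambda^r,\qquad \dashint_{Q_{8\rho_Y}(Y)}\Phi^r=\lambda^r,\qquad Q_{8\rho_Y}(Y)\subset Q_{8R}(0).
\end{equation*}
A Vitali covering argument applied to $\{Q_{\rho_Y}(Y)\}$ then yields a countable, pairwise disjoint family $\{Q_{\rho_i}(Y_i)\}$ whose fivefold dilates cover $\{\Phi>\lambda\}\cap Q_R(0)$ up to a null set, and whose eightfold dilates $Q_{8\rho_i}(Y_i)$ lie in $Q_{8R}(0)$ and have bounded overlap.

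Next I would derive a weak-type estimate. Applying the hypothesis with $R$ replaced by $\rho_i$, on the cylinder $Q_{8\rho_i}(Y_i)$, and using $\dashint_{Q_{8\rho_i}}\Phi^r=\lambda^r$, the self-term is exactly $\theta\lambda^r$ and is absorbed into the left-hand side, leaving $(1-\theta)\lambda^r\le N_0(\dashint_{Q_{8\rho_i}}\Phi)^r+N_0\dashint_{Q_{8\rho_i}}\Psi^r$. Splitting $\Phi$ and $\Psi$ at the level $\lambda/K$ with $K=K(N_0,r,\theta)$ large, the sub-level parts contribute at most $\tfrac{1-\theta}{2}\lambda^r$ and are absorbed too; moreover $\dashint_{Q_{8\rho_i}}\Phi\,\mathbf{1}_{\{\Phi>\lambda/K\}}\le(\dashint_{Q_{8\rho_i}}\Phi^r)^{1/r}=\lambda$ gives $(\dashint_{Q_{8\rho_i}}\Phi\,\mathbf{1}_{\{\Phi>\lambda/K\}})^r\le\lambda^{r-1}\dashint_{Q_{8\rho_i}}\Phi\,\mathbf{1}_{\{\Phi>\lambda/K\}}$. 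Using $\int_{Q_{8\rho_i}}\Phi^r=\lambda^r\abs{Q_{8\rho_i}}$, a routine summation over $i$ exploiting the covering property and the bounded overlap then gives, for all $\lambda>\lambda_0$,
\begin{equation*}
\int_{\{\Phi>\lambda\}\cap Q_R(0)}\Phi^r\le C\lambda^{r-1}\int_{\{\Phi>\lambda/K\}\cap Q_{8R}(0)}\Phi+C\int_{\{\Psi>\lambda/K\}\cap Q_{8R}(0)}\Psi^r,
\end{equation*}
with $C=C(d,N_0,r,\theta)$. Multiplying by $\lambda^{\varepsilon-1}$, integrating over $\lambda\in(\lambda_0,\infty)$, and rearranging (the standard Cavalieri computation, using $r>1$) yields
\begin{equation*}
\int_{Q_R(0)}\Phi^{r+\varepsilon}\le C\varepsilon\int_{Q_{8R}(0)}\Phi^{r+\varepsilon}+C\int_{Q_{8R}(0)}\Psi^{r+\varepsilon}+C\lambda_0^\varepsilon\int_{Q_{8R}(0)}(\Phi^r+\Psi^r),
\end{equation*}
where $C$ does not depend on $\varepsilon$, so the coefficient $C\varepsilon$ can be made as small as desired. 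Running the same argument on an arbitrary concentric pair $Q_{\rho_1}(0)\subset Q_{\rho_2}(0)$ with $R\le\rho_1<\rho_2\le8R$ --- in which case $\lambda_0$ is of order $(\rho_2-\rho_1)^{-(d+2)/r}$ times the average over $Q_{\rho_2}(0)$, so the last term picks up a factor $(\rho_2-\rho_1)^{-(d+2)\varepsilon/r}$ --- and then invoking a standard iteration lemma for nonnegative functions satisfying $f(\rho_1)\le\vartheta f(\rho_2)+A(\rho_2-\rho_1)^{-\alpha}+B$ with $\vartheta<1$, after choosing $\varepsilon$ small enough that $C\varepsilon<1$, absorbs the first term and produces
\begin{equation*}
\int_{Q_R(0)}\Phi^{r+\varepsilon}\le C\lambda_0^\varepsilon\int_{Q_{8R}(0)}\Phi^r+C\int_{Q_{8R}(0)}\Psi^{r+\varepsilon}.
\end{equation*}
All the Cavalieri manipulations are first carried out with $\Phi$ replaced by $\min(\Phi,m)$ in the level-set integrals so that everything is a priori finite, and one lets $m\to\infty$ at the end by monotone convergence. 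Recalling $\lambda_0^r\approx\dashint_{Q_{8R}(0)}(\Phi^r+\Psi^r)$ and renormalizing, this is exactly the claimed estimate with $p=r+\varepsilon$ and $\kappa=\varepsilon$, and $\Phi\in L^p_{\mathrm{loc}}(Q)$ follows.

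I expect the main obstacle to be the covering and bookkeeping rather than any single estimate: making the Vitali selection respect the one-sided-in-time geometry of the $Q_\rho$, keeping the eightfold dilates inside $Q_{8R}(0)$ with bounded overlap, and --- most importantly --- fixing the parameters in the right order, namely $N_1$ (the size of $\lambda_0$), then $K$ (the splitting level), then $\varepsilon$ (the integrability gain), so that each absorption step is legitimate; the truncation needed to keep the level-set integrals finite is routine but must be threaded through the Cavalieri step. Each individual computation is elementary.
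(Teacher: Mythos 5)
The paper itself does not prove Proposition \ref{parareve}: it is invoked as ``a version of Proposition 1.3 in \cite{GiaStr}; also see Chapter V in \cite{Gi83}'', so your proposal is in effect a reconstruction of the argument the paper delegates to those references, and its architecture is the standard (and correct) one. In particular, the stopping-time choice giving $\dashint_{Q_{\rho_i}(Y_i)}\Phi^r>\lambda^r=\dashint_{Q_{8\rho_i}(Y_i)}\Phi^r$ with $Q_{8\rho_i}(Y_i)\subset Q_{8R}(X_0)$ is set up correctly, and its exactness is precisely what lets you absorb the $\theta$-term for an arbitrary $\theta\in(0,1)$; the splitting at level $\lambda/K$, the Cavalieri computation with the truncation $\min(\Phi,m)$, and the final absorption/iteration on concentric cylinders after choosing $\varepsilon$ small are all standard and fine in outline.

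The genuine gap is in the covering/summation step behind the weak-type estimate, and it is not just bookkeeping. First, disjointness of the selected $Q_{\rho_i}(Y_i)$ does \emph{not} imply bounded overlap of the eightfold dilates $Q_{8\rho_i}(Y_i)$: already for Euclidean balls, the disjoint balls of radius $4^{-k}$ centered at distance $2\cdot 4^{-k}$ from a fixed point have all of their $8$-fold dilates containing that point, so the overlap is unbounded; yet your summation $\sum_i\int_{Q_{8\rho_i}}\Phi\,1_{\{\Phi>\lambda/K\}}\le C\int_{Q_{8R}}\Phi\,1_{\{\Phi>\lambda/K\}}$ (and likewise for $\Psi^r$) needs exactly such an overlap bound. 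Second, the sets that actually cover $\{\Phi>\lambda\}\cap Q_R$ after your Vitali selection are the forward-enlarged (two-sided) dilates, which are \emph{not} contained in the backward cylinders $Q_{8\rho_i}(Y_i)$; hence the identity $\int_{Q_{8\rho_i}}\Phi^r=\lambda^r|Q_{8\rho_i}|$ cannot be used to control $\int_{\{\Phi>\lambda\}\cap(\mathrm{dilate})}\Phi^r$, and you cannot repair this by running the stopping time on comparable two-sided cylinders either, because then the $\theta$-term is only bounded by $\theta$ times a dimensional constant times $\lambda^r$ and can no longer be absorbed when $\theta$ is close to $1$. The one-sided anchoring also defeats the naive engulfing needed for a Besicovitch-type selection of the $Q_{8\rho_i}$ themselves, so labelling this point ``routine'' is not justified: it is exactly where \cite{GiaStr} and \cite{Gi83} organize the decomposition differently (a Calder\'on--Zygmund-type subdivision adapted to the parabolic geometry, rather than ``Vitali plus dilates''). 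With the covering step replaced by the decomposition actually used in those references, the rest of your argument goes through and yields the proposition.
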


\begin{proof}[Proof of Theorem \ref{thm3}]
We extend $u$ to $Q := (0,T) \times \bR^d$ so that $u(t,x)=0$ if $x \in \bR^d \setminus \Omega$.
It is easily seen that $u \in V_2(Q)$. Also $f$ and $g$ are extended in a similar way.
Let $R < R_0/4$.

Let $X_0 = (t_0,x_0)\in Q$ such that $Q_{4R}(X_0) \subset Q$.
Then we have the following three cases:
$B_{4R}(x_0) \subset \Omega$, $B_{4R}(x_0) \cap \partial \Omega \ne \emptyset$, or  $B_{4R}(x_0) \cap \Omega = \emptyset$.
In the first case, by Proposition \ref{prop01} we have
\begin{multline}
								\label{eq0215}
\dashint_{Q_{R/2}(X_0)} (|\nabla u|^2+|u|^\gamma)\\
\le N \left(\dashint_{Q_{4R}(X_0)}|\nabla u|^{q}+|u|^{\frac {\gamma q} 2}\right)^{2/q}
+N\dashint_{Q_{4R}(X_0)}(|f|^2+|F|^2)
\\
+N\sup_{t \in (t_0-R^2,t_0)} \left(\int_{B_{4R}(x_0)} |u|^2 \, dx\right)^{\frac{2}{d}} \left(\dashint_{Q_{4R}(X_0)} |\nabla u|^2\right),	
\end{multline}
where $F = |g|^{\frac12\frac{\gamma}{\gamma-1}}$ and $N = N(d,\mu,\mu_1,\mu_2,\|u\|_{V_2(\cU_T)},\|f\|_{L_2(\cU_T)},
\|g\|_{L_{\gamma/(\gamma-1)}(\cU_T)})$.
For the second case, take $y_0 \in \partial \Omega$
such that $|x_0 - y_0| = \text{dis}(x_0,\partial \Omega)$.
We see that
$$
B_{R/2}(x_0) \subset B_R(y_0) \subset B_{2R}(y_0) \subset B_{4R}(x_0).
$$
This combined with the inequality in Proposition \ref{prop02} gives \eqref{eq0215}
with $N = N(d,\mu,\mu_1,\mu_2,\beta)$. In the third case, \eqref{eq0215} holds trivially.

Now due to Corollary \ref{cor0201}, we have a sufficiently small $R_0'>0$, which depends on $u$, $\|f\|_{L_\sigma(\cU_T)}$ and $\|g\|_{L_\tau(\cU_T)}$, such that
for all $0 < R \le R_0'$,
$$
N\sup_{t \in (t_0-R^2,t_0)} \left(\int_{B_{4R}(x_0)} |u|^2 \, dx\right)^{\frac{2}{d}}
< 1.
$$
Then by applying Proposition \ref{parareve} with $r = 2/q>1$,
$$
\Phi := |\nabla u|^q + |u|^{\frac{\gamma q}2},
\quad
\Psi := |f|^q + |F|^q
$$
for some $p \in \left(2, \min\{\sigma, \frac{2(\gamma-1)}{\gamma}\tau\}\right]$ and all $Q_{4R}(Y_0) \subset Q$, we have
$$
\dashint_{Q_{R/2}(Y_0)} |\nabla u|^p + |u|^{\frac{\gamma p}2}
\le N \left(\dashint_{Q_{4R}(Y_0)} |\nabla u|^2 + |u|^{\gamma}\right)^{\frac p 2}
+ N \dashint_{Q_{4R}(Y_0)} |f|^p + |F|^p,
$$
where $p$ and $N$ depend only on $d$, $\mu$, $\mu_1$, $\mu_2$, $\beta$, $u$, $\|f\|_{L_\sigma(\cU_T)}$ and $\|g\|_{L_\tau(\cU_T)}$.
Covering $(\varepsilon,T)\times\Omega$ with appropriate cylinders $Q_{R/2}(Y_0)$ such that $Q_{4R}(Y_0) \subset Q$ gives the desired result.

If the initial condition is zero, we extend $u$ to be zero for $t<0$ so that the extended function $u$ satisfies \eqref{eq0218} on $(-1,T)\times\Omega$.
\end{proof}

\section{Linear estimates}
                                \label{sec7}
To proceed to the proofs of Theorems \ref{thm2} and \ref{thm4}, we need $L_p$-estimates for linear elliptic and parabolic equations.
In this section, we consider the following linear parabolic equation
\begin{equation}
                                \label{eq22.18}
\left\{
  \begin{aligned}
    v_t - D_i(a_{ij} D_j v) &= D_i h_i + h \quad \text{in} \,\, \cU_T, \\
    v&=0 \quad \text{on} \,\, \partial_p \cU_T,
  \end{aligned}
\right.
\end{equation}
and present some $L_p$-estimates necessary to the proofs of our regularity results.

We assume that the leading coefficients $a_{ij}$ are merely measurable in $t$
and have small mean oscillations with respect to $x \in \bR^d$. To describe this assumption, we set
$$
a_R^{\#} = \sup_{1 \le i,j \le d}
\sup_{\substack{(t_0,x_0) \in \bR^{d+1}\\r \le R}}
\dashint_{\!t_0-r^2}^{\,\,\,t_0}\dashint_{B_r(x_0)}\dashint_{B_r(x_0)} |a_{ij}(s,x) - a_{ij}(s,y)| \, dx \, dy \, ds.
$$

Assume that $|a_{ij}(t,x)| \le \mu^{-1}$ and $a_{ij}(t,x)\xi_i\xi_j \ge \mu |\xi|^2$
for all $\xi \in \bR^d$ and $(t,x) \in \bR^{d+1}$.
Also we assume
\begin{assumption}[$\rho_1$]
								\label{liSMO}
There is a constant $R_1 \in (0,1]$ such that $a_{R_1}^{\#} \le \rho_1$.
\end{assumption}

We recall the following result in \cite{DK09}.
\begin{proposition}
								\label{prop7.1}
Let $\Omega$ be the whole space $\bR^d$, a half space, or a bounded Lipschitz domain.
Let $\fp\in  (2,\infty)$, $p\in [\fp/(\fp-1), \fp]$, $h_i \in L_{p}(\cU_T)$, and $h \in L_p(\cU_T)$.
Then there exist positive $\beta=\beta(d,\fp,\mu)$ and $\rho_1=\rho_1(d,\fp,\mu)$
such that under Assumption \ref{assump2} ($\beta$) and Assumption \ref{liSMO} ($\rho_1$),
there is a unique $v \in \cH_p^1(\cU_T)$
satisfying \eqref{eq22.18} and
\begin{equation*}
              %                      \label{eq22.20b}
\|v\|_{\cH_p^1(\cU_T)} \le N \|h_i\|_{L_p(\cU_T)}
+ N \|h\|_{L_p(\cU_T)},
\end{equation*}
where $N = N(d,\mu,\fp, R_1,T, |\Omega|)$.
\end{proposition}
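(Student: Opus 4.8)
Since Proposition~\ref{prop7.1} is quoted from \cite{DK09}, strictly speaking nothing new is proved here; but let me describe the line of argument one would follow to establish it, since its structure underlies the bootstrap in Section~\ref{sec6}. The plan is to first dispose of the case $p=2$, then reduce general $p$ to the single case $p=\fp$, and finally prove the a priori $\cH^1_\fp$-bound by a mean-oscillation/perturbation argument.

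For $p=2$ I would argue classically: testing \eqref{eq22.18} against $v$ (more precisely against a Steklov average of $v$ and passing to the limit) and using only $|a_{ij}|\le\mu^{-1}$, $a_{ij}\xi_i\xi_j\ge\mu|\xi|^2$, Young's inequality, and the Poincar\'e inequality on $\Omega$, one obtains $\tri{v}_{\cU_T}\le N(\|h_i\|_{L_2(\cU_T)}+\|h\|_{L_2(\cU_T)})$; existence and uniqueness in $\cH^1_2(\cU_T)$ follow from Galerkin's method (or Lions' theorem), and $v_t\in\bH^{-1}_2(\cU_T)$ is then read off from the equation, with no smallness of $\beta$ or $\rho_1$ entering. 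Next, it suffices to prove the a priori estimate for $p=\fp$: the range $p\in(2,\fp)$ follows by interpolation with $p=2$, and $p\in[\fp/(\fp-1),2)$ follows by duality, since after time reversal the formal adjoint $\partial_t-D_i(a_{ji}(T-t,x)D_j\cdot)$ is again parabolic of the same type, with the same ellipticity and $L_\infty$ bounds and the same $x$-mean-oscillation, on the same domain. Solvability in $\cH^1_p$ then comes from the method of continuity along $\theta a_{ij}+(1-\theta)\delta_{ij}$, all hypotheses being preserved uniformly and $\partial_t-\Delta$ being an isomorphism between the relevant spaces.

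The heart of the matter is the a priori bound for $\fp>2$. Fix a small parabolic cylinder $Q_r$, either interior (with $Q_{4r}(X_0)\subset\cU_T$) or, after flattening $\partial\Omega$ using Assumption~\ref{assump2}($\beta$), centered at the lateral boundary — the flattening producing a drift term of size $O(\beta)$ that is treated as a small perturbation. Write $v=w+z$, where $w$ solves the homogeneous equation with the $x$-averaged coefficients $\bar a_{ij}(t):=\dashint_{B_r(x_0)}a_{ij}(t,\cdot)$ and the same boundary data on the parabolic boundary of the cylinder. For $w$ one has interior and flat-boundary Lipschitz-type estimates for equations whose coefficients depend only on $t$, yielding a gradient mean-oscillation decay $\dashint_{Q_{\kappa r}}|Dw-(Dw)_{Q_{\kappa r}}|^2\le N\kappa^{2\delta}\dashint_{Q_r}|Dw|^2$; for $z$, the energy estimate together with Assumption~\ref{liSMO}($\rho_1$) gives $\dashint_{Q_r}|Dz|^2\le N(\rho_1+\beta)^{2\nu}\dashint_{Q_{2r}}|Dv|^2+N\dashint_{Q_{2r}}(|h_i|^2+r^2|h|^2)$. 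Combining, choosing first $\kappa$ small and then $\rho_1,\beta$ small depending on $\kappa$ and $\fp$, and invoking the parabolic Fefferman--Stein sharp-function theorem together with the Hardy--Littlewood maximal function theorem (applied after rewriting the $L_2$-averages as $L_{q_0}$-averages for some $q_0\in(1,2)$), one arrives at $\|Dv\|_{L_\fp(\cU_T)}\le\varepsilon\|Dv\|_{L_\fp(\cU_T)}+N\|h_i\|_{L_\fp(\cU_T)}+N\|h\|_{L_\fp(\cU_T)}+N\|v\|_{L_\fp(\cU_T)}$ with $\varepsilon<1/2$; absorbing $\|Dv\|_{L_\fp}$ on the left and controlling $\|v\|_{L_\fp}$ by the $p=2$ bound, interpolation, and a Poincar\'e inequality in $\cU_T$ finishes the estimate, while $\|v_t\|_{\bH^{-1}_\fp(\cU_T)}$ follows from the equation.

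I expect the genuinely hard step to be the model estimate for $w$ near the boundary: because $a_{ij}$ is merely measurable in $t$, classical parabolic Schauder theory is unavailable, so the gradient mean-oscillation decay of $w$ must be obtained by a separate argument — exploiting that $Dw$ weakly solves a related equation and that time-integrated quantities such as $\int_{t-r^2}^t|Dw|^2\,ds$ are H\"older in $x$ — uniformly up to a flat boundary patch, and then transported to a genuine Lipschitz domain using the small-Lipschitz-constant hypothesis. This is precisely the technical work carried out in \cite{DK09}.
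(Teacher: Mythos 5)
Your proposal is correct and follows essentially the paper's route: the proposition itself is quoted from \cite{DK09}, and the only genuinely new content in the paper's statement --- that $\beta$ and $\rho_1$ can be chosen to depend on $\fp$ rather than on $p$ --- is obtained exactly as you arrange it, by securing the single exponent $\fp$ and then covering the whole range $p\in[\fp/(\fp-1),\fp]$ by duality and interpolation (the paper dualizes at $\fp$ and then applies Marcinkiewicz's theorem, whereas you interpolate with $p=2$ and dualize below $2$; the difference is immaterial). Your sketch of the underlying freezing/mean-oscillation and Fefferman--Stein argument for coefficients merely measurable in $t$ is consistent with what is carried out in \cite{DK09}, so no gap remains.
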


The proposition above was proved in \cite{DK09} with $\beta$ and $\rho_1$ depending on $p$. An interpolation argument shows that they can be chosen sufficiently small in terms of $\fp$, instead of $p$. Indeed, if we have the $\cH_{\fp}^1$
solvability of \eqref{eq22.18} for some $a_{ij}$ and $\Omega$, by the duality, the $\cH^1_{\fp/(\fp-1)}$ solvability follows. Then we apply Marcinkiewicz's  theorem to get the $\cH^1_p$ solvability for any $p\in [\fp/(\fp-1),\fp]$.

By using Proposition \ref{prop7.1}, we derive
\begin{theorem}
								\label{liRes}
Let $\sigma, q \in (1,\infty)$, $\fp \in (2,\infty)$, $h_i \in L_{\sigma}(\cU_T)$, and $h \in L_q(\cU_T)$.
Assume that $p:= \min \{ \sigma, q^* \}\in[\fp/(\fp-1), \fp]$.
Then there exist positive $\beta=\beta(d,\fp,\mu)$ and $\rho_1=\rho_1(d,\fp,\mu)$
such that under Assumption \ref{assump2} ($\beta$) and Assumption \ref{liSMO} ($\rho_1$),
there is a unique $v \in \cH_p^1(\cU_T)$
%where $p = \min \{ \sigma, q^* \}$,
satisfying \eqref{eq22.18} and
\begin{equation}
                                    \label{eq22.20}
\|v\|_{\cH_p^1(\cU_T)} \le N \|h_i\|_{L_\sigma(\cU_T)}
+ N \|h\|_{L_q(\cU_T)},
\end{equation}
where $N = N(d,\mu,\sigma,\fp, R_1, T, |\Omega|)$.
\end{theorem}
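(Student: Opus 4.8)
The plan is to reduce Theorem \ref{liRes} to Proposition \ref{prop7.1}. The only obstruction to a direct application is that the zeroth-order datum $h$ lies merely in $L_q(\cU_T)$ with $q$ possibly far below $p$, so I would first argue that the whole right-hand side $D_ih_i+h$ of \eqref{eq22.18} represents an element of $\bH^{-1}_p(\cU_T)$ with $\|D_ih_i+h\|_{\bH^{-1}_p(\cU_T)}\le N\big(\|h_i\|_{L_\sigma(\cU_T)}+\|h\|_{L_q(\cU_T)}\big)$. This splits into two independent pieces. For the divergence piece, since $|\cU_T|<\infty$ and $p=\min\{\sigma,q^*\}\le\sigma$, H\"older's inequality gives $h_i\in L_p(\cU_T)$ with $\|h_i\|_{L_p(\cU_T)}\le N(|\Omega|,T,\sigma,p)\,\|h_i\|_{L_\sigma(\cU_T)}$, so $D_ih_i\in\bH^{-1}_p(\cU_T)$ with the desired bound. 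For the zeroth-order piece, the point is the embedding $L_q(\cU_T)\hookrightarrow\bH^{-1}_p(\cU_T)$ valid precisely when $p\le q^*$, i.e.\ $1/q\le 1/p+1/(d+2)$; granting it, one writes $h=D_i\hat h_i+\hat h$ with $\hat h_i,\hat h\in L_p(\cU_T)$ and $\|\hat h_i\|_{L_p(\cU_T)}+\|\hat h\|_{L_p(\cU_T)}\le N\|h\|_{L_q(\cU_T)}$.

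With both pieces in hand I would rewrite \eqref{eq22.18} as $v_t-D_i(a_{ij}D_jv)=D_i(h_i+\hat h_i)+\hat h$, whose data $h_i+\hat h_i$ and $\hat h$ both belong to $L_p(\cU_T)$. Since $p\in[\fp/(\fp-1),\fp]$ by hypothesis, Proposition \ref{prop7.1} (with the $\fp$-dependent $\beta$ and $\rho_1$ it supplies, which are exactly those claimed in Theorem \ref{liRes}) produces a unique $v\in\cH^1_p(\cU_T)$ solving \eqref{eq22.18}, together with $\|v\|_{\cH^1_p(\cU_T)}\le N\big(\|h_i+\hat h_i\|_{L_p(\cU_T)}+\|\hat h\|_{L_p(\cU_T)}\big)\le N\big(\|h_i\|_{L_\sigma(\cU_T)}+\|h\|_{L_q(\cU_T)}\big)$, which is \eqref{eq22.20}. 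Uniqueness of $v$ in $\cH^1_p(\cU_T)$ is inherited from Proposition \ref{prop7.1}, because any two $\cH^1_p$-solutions of \eqref{eq22.18} solve the rewritten equation with the same $L_p$-data.

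The substantive step, and the one I expect to be the main obstacle, is the embedding $L_q(\cU_T)\hookrightarrow\bH^{-1}_p(\cU_T)$ for $p\le q^*$, since the definition of $\bH^{-1}_p$ only allows one spatial derivative and a zeroth-order term in $L_p$, so a slicewise argument would only reach $p\le q$; one genuinely needs the parabolic (space--time) smoothing. The natural route is potential-theoretic: extend $h$ by zero to $\bR^{d+1}$, let $V$ be its caloric potential, so $V_t-\Delta V=h$ and the parabolic Calder\'on--Zygmund estimate gives $V\in W^{1,2}_q(\bR^{d+1})$ with norm controlled by $\|h\|_{L_q(\cU_T)}$; the parabolic Sobolev embedding (parabolic dimension $d+2$) then puts $DV\in L_{q^*}(\bR^{d+1})$, hence $DV|_{\cU_T}\in L_p(\cU_T)$. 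Writing $h=V_t-D_i(D_iV)$ exhibits the good divergence part $-D_i(D_iV)$, while the delicate term $V_t$ is dealt with not in isolation but after subtracting $V$ from the solution: $w:=v-V$ then satisfies $w_t-D_i(a_{ij}D_jw)=D_i\big((a_{ij}-\delta_{ij})D_jV\big)$, whose data again lies in $L_p(\cU_T)$ and in which the time derivative $V_t$ cancels. This is exactly the feature that makes the parabolic interior estimate (Proposition \ref{prop01}) more involved than the elliptic one, and it explains why the reachable exponent is the parabolic $q^*$ rather than the elliptic Sobolev exponent. Carrying this out carefully — in particular tracking the nonzero Cauchy--Dirichlet contribution of $-V$ on $\partial_p\cU_T$ and applying Proposition \ref{prop7.1} to $w$ — is where the only real bookkeeping lies.
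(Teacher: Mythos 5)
Your write-up contains two distinct arguments, and the one your first two paragraphs rest on is based on a false claim. The embedding $L_q(\cU_T)\hookrightarrow\bH^{-1}_p(\cU_T)$ for $q<p\le q^*$ does not hold, and no caloric-potential construction can prove it: membership in $\bH^{-1}_p(\cU_T)$ forces an essentially slicewise representation. Indeed, if $h=\Div\vg+\tilde h$ with $\vg,\tilde h\in L_p(\cU_T)$, then testing with products $\chi(t)\phi(x)$ shows that for a.e.\ $t$ one has $h(t,\cdot)=\Div\vg(t,\cdot)+\tilde h(t,\cdot)$ in $\cD'(\Omega)$, so the $\bH^{-1}_p(\cU_T)$-norm dominates the $L_p(dt)$-norm of the slicewise minimal representation norms. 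Taking $h(t,x)=\lambda(t)h_0(x)$ with $0\not\equiv h_0\in C_0^\infty(\Omega)$ and $\lambda\in L_q(0,T)\setminus L_p(0,T)$ gives $h\in L_q(\cU_T)$ but $h\notin\bH^{-1}_p(\cU_T)$; no gain in time integrability is possible, which is exactly why the improvement from $q$ to $q^*$ must be a property of the solution operator (via the $W^{1,2}_q$ estimate for the heat equation and the parabolic Sobolev embedding), not of the data space. Your third paragraph effectively abandons the embedding and instead subtracts the potential from the solution, which is the paper's mechanism (the paper writes $v=w+\hat w$, where $w$ solves the heat equation with datum $h$ and zero Cauchy--Dirichlet data, and $\hat w$ solves the problem with right-hand side $D_i((a_{ij}-\delta_{ij})D_jw)$ via Proposition \ref{prop7.1}); but then the first half of your argument, including the asserted bound $\|D_ih_i+h\|_{\bH^{-1}_p(\cU_T)}\le N(\|h_i\|_{L_\sigma}+\|h\|_{L_q})$, should be discarded rather than ``granted.''

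The second, and decisive, gap is the boundary condition of your potential. You take $V$ to be the caloric potential of $h$ extended by zero to $\bR^{d+1}$, so $V$ does not vanish on the lateral boundary $(0,T)\times\partial\Omega$, and $w:=v-V$ then carries the inhomogeneous Dirichlet datum $-V$ there. Proposition \ref{prop7.1} concerns only the homogeneous Cauchy--Dirichlet problem, and the paper has no trace/extension theory for $\cH^1_p$ on Lipschitz cylinders that would reduce the inhomogeneous problem to it; with $\Omega$ merely Lipschitz (with small constant) this is not ``bookkeeping,'' it is precisely the hard point. The paper's proof avoids it structurally: using linearity one may assume $h_i\equiv0$, and a partition of unity reduces matters to the model cases $\Omega=\bR^d$ (no lateral boundary) and $\Omega=\bR^d_+$, where the auxiliary heat-equation solution is built from the odd extension $\bar h$ of $h$ in $x^1$, hence vanishes on $\{x^1=0\}$ automatically; only then is Proposition \ref{prop7.1} applied to the correction $\hat w$, which has zero boundary data. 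As written, your application of Proposition \ref{prop7.1} to $w=v-V$ is unjustified; you would need either to prove an $\cH^1_p$ extension result for the lateral trace of $V$ on a Lipschitz cylinder, or to reproduce the paper's localization-plus-reflection scheme. (Your reduction of the $D_ih_i$ part by H\"older, and the uniqueness argument via Proposition \ref{prop7.1}, are fine.)
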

\begin{proof}
We first prove the existence. Since the equation is linear, by Proposition \ref{prop7.1} we may assume $h_i\equiv 0,i=1,2,...,d$ and thus $p=q^*$. Also by using a partition of the unity, it suffices to consider the cases $\Omega=\bR^d$ and $\Omega=\bR^d_+$.

In the case that $\Omega=\bR^d$, let $w$ be the unique $W^{1,2}_q(\cU_T)$ solution to
$$
\left\{
  \begin{aligned}
    w_t-\Delta w&=h \quad \text{in } \cU_T, \\
    w&=0 \quad \text{on } \partial_p \cU_T,
  \end{aligned}
\right.
$$
By the parabolic Sobolev imbedding theorem, we know that $w\in \cH^1_{q^*}(\cU_T)$ and
\begin{equation}
                                \label{eq22.10}
\|Dw\|_{L_{q^*}(\cU_T)}\le N\|h\|_{L_q(\cU_T)}.
\end{equation}
Now by Proposition \ref{prop7.1} there is a unique solution $\hat w\in \cH^1_{q^*}(\cU_T)$ to
$$
\left\{
  \begin{aligned}
    \hat w_t-D_i(a_{ij}D_j \hat w)=D_i((a_{ij}-\delta_{ij})D_j w) \quad &\text{in } \cU_T, \\
    \hat w=0 \quad &\text{on } \partial_p \cU_T.
  \end{aligned}
\right.
$$
Moreover,
\begin{equation*}
              %                  \label{eq22.12}
\|D\hat w\|_{L_{q*}(\cU_T)}\le N\|Dw \|_{L_{q^*}(\cU_T)}\le N\|h\|_{L_q(\cU_T)}.
\end{equation*}
Clearly $v:=w+\hat w\in \cH^1_p(\cU_T)$ is a solution to \eqref{eq22.18} and satisfies \eqref{eq22.20}.

In the case that $\Omega=\bR^d_+$, let $w$ be the unique $W^{1,2}_q((0,T)\times\bR^d)$ solution to $w_t-\Delta w=\bar h$ in $(0,T)\times\bR^d$ with the zero initial condition, where $\bar h$ is the odd extension of $h$ with respect to $x_1$. Clearly $w=0$ on $\partial_p\cU_T$, and as before we know that $w\in \cH^1_{q^*}(\cU_T)$ and satisfies \eqref{eq22.10}. Now we argue as in the previous case and find the solution $v$ to the initial-boundary value problem.

Finally, the uniqueness follows from the uniqueness of $\cH^1_{\min(\sigma,q)}(\cU_T)$ solution stated in Proposition \ref{prop7.1}.
\end{proof}

\section{Proof of Theorem \ref{thm4}}
							\label{sec6}

Before the proof of Theorem \ref{thm4} we present the following two lemmas, which assert that solutions to \eqref{eq3.30} are globally bounded and have some H\"{o}lder regularity.

\begin{lemma}
								\label{lembd}
Under the same assumptions as in Theorem \ref{thm4}, we have								
$$
\|u\|_{L_{\infty}(\cU_T)} \le N(d, \mu, \mu_1,\mu_2, \sigma, \tau, \beta, u, \|f\|_{L_\sigma(\cU_T)}, \|g\|_{L_\tau(\cU_T)}, T, |\Omega|).
$$
\end{lemma}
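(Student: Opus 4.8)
\emph{Proof idea.} The plan is a De Giorgi--Stampacchia truncation argument on the level sets of $u$. Since $-u$ is a weak solution of an equation of the same type (with coefficients $A_{ij}(t,x,-z)$, $-a_i(t,x,-z)$, $-b(t,x,-z,-\xi)$, which satisfy the same ellipticity, continuity and controlled growth conditions with the same constants), it suffices to bound $u$ from above. For $k\ge0$ put $A_k:=\{(t,x)\in\cU_T:\ u(t,x)>k\}$; I would test the weak formulation of \eqref{eq3.30}, integrated over $(0,t)$, with $(u-k)_+\in\rV_2(\cU_T)$, rigorously via Steklov averages as in the footnote to Lemma \ref{lemm0201}. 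Since $u$ vanishes on $\partial_l\cU_T$ and $u(0,\cdot)=0\le k$, no cut-off is needed and all boundary terms drop out.

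First I would establish an energy inequality on the level sets. Using ellipticity for the leading term, the controlled growth of $a_i$ and $b$ (observe $2(1-1/\gamma)<2$, so the gradient contribution of $b$ is subcritical and is absorbed by Young's inequality), the elementary bound $|u|\le(u-k)_++k$ on $A_k$, and the estimates $\int_{A_k}|f|^2\le\|f\|_{L_\sigma(\cU_T)}^2|A_k|^{1-\frac2\sigma}$ and $\int_{A_k}|g|(u-k)_+\le\|g\|_{L_\tau(\cU_T)}\|(u-k)_+\|_{L_\gamma(\cU_T)}|A_k|^{1-\frac1\tau-\frac1\gamma}$ (both exponents are positive because $\sigma>2$ and $\tau>\gamma/(\gamma-1)$) followed by Young's inequality, one gets for every $k\ge0$
\[
\tri{(u-k)_+}_{\cU_T}^2\le N_0\int_{A_k}(u-k)_+^\gamma+N_0\,k^\gamma|A_k|+N_0\,|A_k|^{1-\frac2\sigma}+N_0\,|A_k|^{2(1-\frac1\tau-\frac1\gamma)},
\]
where $N_0$ depends on $d,\mu,\mu_1,\mu_2,\sigma,\tau,\|f\|_{L_\sigma(\cU_T)},\|g\|_{L_\tau(\cU_T)}$. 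The only genuinely critical term is $\int_{A_k}(u-k)_+^\gamma$, because $\gamma=\tfrac{2(d+2)}{d}=2+\tfrac4d$ is exactly the exponent in the parabolic Sobolev embedding \eqref{eqn:2.2}.

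The key step is to absorb this critical term. Since $u\in\rV_2(\cU_T)$, \eqref{eqn:2.2} gives $u\in L_\gamma(\cU_T)$ (one may alternatively invoke the higher integrability of Theorem \ref{thm3}), so $\|(u-k)_+\|_{L_\gamma(\cU_T)}\to0$ as $k\to\infty$ by dominated convergence. Writing $\int_{A_k}(u-k)_+^\gamma\le N(d)^2\|(u-k)_+\|_{L_\gamma(\cU_T)}^{\gamma-2}\tri{(u-k)_+}_{\cU_T}^2$ and choosing $k_*=k_*(d,\mu,\mu_1,\mu_2,u)$ so that $N_0N(d)^2\|(u-k_*)_+\|_{L_\gamma(\cU_T)}^{\gamma-2}\le\tfrac12$, this term moves to the left-hand side for all $k\ge k_*$, leaving
\[
\tri{(u-k)_+}_{\cU_T}^2\le N_1\,k^\gamma|A_k|+N_1\,|A_k|^{1-\frac2\sigma}+N_1\,|A_k|^{2(1-\frac1\tau-\frac1\gamma)}\qquad(k\ge k_*),
\]
with $N_1=2N_0$.

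Finally I would iterate. For $k>h\ge k_*$, \eqref{eqn:2.2} gives $(k-h)|A_k|^{1/\gamma}\le\|(u-h)_+\|_{L_\gamma(\cU_T)}\le N(d)\tri{(u-h)_+}_{\cU_T}$; combining this with the previous display and using $|A_k|\le1$ for $k\ge k_*$ (after enlarging $k_*$), with $\psi(k):=\tri{(u-k)_+}_{\cU_T}^2$,
\[
\psi(k)\le\frac{N}{(k-h)^{\gamma}}\Big(k^\gamma\psi(h)^{\frac\gamma2}+\psi(h)^{\frac\gamma2(1-\frac2\sigma)}+\psi(h)^{\gamma(1-\frac1\tau)-1}\Big).
\]
The three exponents of $\psi(h)$ are $\tfrac\gamma2>1$ and, by a direct computation, $\tfrac\gamma2(1-\tfrac2\sigma)>1\iff\sigma>d+2$ and $\gamma(1-\tfrac1\tau)-1>1\iff\tau>\tfrac d2+1$ --- this is exactly where the hypotheses on $\sigma,\tau$ enter. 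With the levels $k_n:=M(2-2^{-n})$ and $M\ge k_*$ one has $k_{n+1}^\gamma(k_{n+1}-k_n)^{-\gamma}\le C\,2^{\gamma n}$ \emph{independently of} $M$, so the $k^\gamma|A_k|$ term is compatible with the geometric iteration; since $\psi(M)\to0$ as $M\to\infty$ (because $M^\gamma|A_M|\le\int_{\{u>M\}}|u|^\gamma\to0$ and $|A_M|\to0$), the standard De Giorgi iteration lemma applies once $M$ is chosen large, yielding $\psi(2M)=0$, i.e.\ $u\le2M$ a.e.\ in $\cU_T$. Running the same argument for $-u$ and keeping track of the choices of $k_*$ and $M$ gives the claimed bound. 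The main obstacle is the critical term $\int_{A_k}(u-k)_+^\gamma$: both its absorption (via the decay of $\|(u-k)_+\|_{L_\gamma}$) and the bookkeeping showing that the iteration exponents exceed $1$ precisely when $\sigma>d+2$ and $\tau>d/2+1$ are the delicate points.
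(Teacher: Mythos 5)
Your proposal is correct in outline, but it follows a genuinely different route from the paper. The paper's proof is essentially a citation argument: it verifies the structure conditions of Theorem V.2.1 in Lady\v{z}enskaja--Solonnikov--Ural'ceva with $\psi=N|f|^2+Ng\in L_{q_2}(\cU_T)$ for some $q_2>d/2+1$ (this is where $\sigma>d+2$ and $\tau>d/2+1$ enter), and, crucially, it feeds in the higher integrability $u\in L_{q_1}(\cU_T)$ for some $q_1\in(2+4/d,\,4+8/d)$ with $q_1>\gamma$, supplied by Theorem \ref{thm3}; that strict gain over the energy-class exponent is what lets the cited theorem handle the critical growth $|u|^\gamma$. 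You instead run the De Giorgi--Stampacchia iteration directly and treat the critical term $\int_{A_k}(u-k)_+^\gamma$ at the borderline exponent by absorbing it through the smallness of $\|(u-k)_+\|_{L_\gamma(\cU_T)}$ for large $k$, which needs only $u\in\rV_2(\cU_T)$ and \eqref{eqn:2.2}, so Theorem \ref{thm3} is not required for this lemma; the price is that $k_*$, hence the bound, depends on $u$ through the modulus of integrability of $|u|^\gamma$, which is admissible since the constant in the lemma is allowed to depend on $u$ (as is the paper's, through $p$ in Theorem \ref{thm3}). Your exponent bookkeeping, $\tfrac\gamma2(1-\tfrac2\sigma)>1\iff\sigma>d+2$ and $\gamma(1-\tfrac1\tau)-1>1\iff\tau>\tfrac d2+1$, is exactly the arithmetic that also underlies the paper's verification of the LSU hypotheses. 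Two small points if you write it out: in the iteration display keep the true prefactors $(k-h)^{-\gamma(1-2/\sigma)}$ and $(k-h)^{-2\gamma(1-1/\tau-1/\gamma)}$ on the last two terms rather than replacing them by $(k-h)^{-\gamma}$ (with $k_n=M(2-2^{-n})$ and $M\ge1$ they are still bounded by $C\,2^{\gamma n}$ uniformly in $M$, so nothing is lost), and quote explicitly the standard fast geometric convergence lemma so that the conclusion $\tri{(u-2M)_+}_{\cU_T}=0$, and hence $u\le 2M$, is airtight; the symmetric argument for $-u$ then finishes as you say.
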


\begin{proof}
We use Theorem V.2.1 in \cite{LSU}.
To apply this theorem we need to check that
$$
u \in L_{q_1}(\cU_T),
\quad  \text{for some}\,\,q_1 \in \left(2+\frac4d, 4+\frac8d\right).
$$
We also need to check that
\begin{align}
								\label{eq0217}
\left(A_{ij}(t,x,u)\xi_j + a_i(t,x,u)\right)\xi_i &\ge \frac{\mu}{2} |\xi|^2 - N(\mu,\mu_1) |u|^{\gamma} - |u|^2 \psi(t,x),\\	
								\label{eq0219}
- b(t,x,u,\xi)u &\le \frac{\mu}4|\xi|^2 + N(\mu,\mu_2) |u|^{\gamma} + |u|^2 \psi(t,x)	
\end{align}
for all $(t,x) \in \bR^{d+1}$, $|u| \ge 1$, and $\xi \in \bR^d$,
%\begin{equation}
%								\label{eq0220}
%\begin{aligned}
%&\frac12 < (\gamma-2)\frac{d+2}{2q_1} := 1 - \chi_1 < 1,
%\\
%&\frac{q_1}{\gamma-2} \ge \frac{d}{2(1-\chi_1)} \vee 1,
%\quad
%\frac{q_1}{\gamma-2} \in \left[\frac{1}{1-\chi_1}, \frac{2}{1-2\chi_1}\right],
%\end{aligned}
%\end{equation}
%while $\psi \in L_{q_2}(\cU_T)$ with $q_2$ satisfying
%\begin{equation}
%\begin{aligned}
%								\label{eq0221}
%&\frac12 < \frac{d+2}{2q_2} := 1-\chi_2 < 1,
%\\
%&q_2 \ge \frac{d}{2(1-\chi_2)} \vee 1,
%\quad
%q_2 \in \left[\frac{1}{1-\chi_2}, \frac{2}{1-2\chi_2}\right].
%\end{aligned}
%\end{equation}
where
\begin{equation}
								\label{eq0222}
%u \in L_{q_1}(\cU_T),\quad
\psi\in L_{q_2}(\cU_T),
\quad
\text{for some}\,\,q_2 \in \left(\frac d 2 + 1, d+2\right).
\end{equation}
%$$
%q_1=r_1 \in \left(2+\frac4d, 4+\frac8d\right)
%$$
%$$
%q_2=r_2 = \frac d 2 +1+\epsilon
%$$

Observe that
\begin{align*}
a_i(t,x,u)\xi_i &\le \mu_1 |\xi| (|u|^{\gamma/2}+f)
\le \varepsilon \mu_1 |\xi|^2 + N(\varepsilon)\mu_1|u|^{\gamma} + N(\varepsilon)\mu_1|f|^2\\
&\le \varepsilon \mu_1 |\xi|^2 + N(\varepsilon)\mu_1|u|^{\gamma} + N(\varepsilon)\mu_1|u|^2|f|^2
\end{align*}
for $|u| \ge 1$.
By taking $\varepsilon = 1/(2\mu_1)$, we have
\begin{align*}
\left(A_{ij}(t,x,u)\xi_j + a_i(t,x,u)\right)\xi_i
&\ge \mu|\xi|^2 - \varepsilon \mu_1 |\xi|^2 - N(\varepsilon)\mu_1|u|^{\gamma} - N(\varepsilon)\mu_1|u|^2|f|^2\\
&= \frac{\mu}2 |\xi|^2 - N(\mu,\mu_1) |u|^\gamma - N(\mu,\mu_1) |u|^2 |f|^2.
\end{align*}
For the inequality \eqref{eq0219}, we have
\begin{multline*}
-b(t,x,u,\xi)u \le |b(t,x,u,\xi)||u|
\le \mu_2 \left( |u||\xi|^{2(1-1/\gamma)} + |u|^{\gamma} + |u| g \right)\\
\le \mu_2 \left( \varepsilon |\xi|^2 + N(\varepsilon)|u|^{\gamma} + |u|^2 g\right)
= \frac{\mu}4 |\xi|^2 + N(\mu, \mu_2)|u|^\gamma + N(\mu,\mu_2)|u|^2 g.
\end{multline*}
Let $p$ be the constant from Theorem \ref{thm3} and recall $\gamma = 2 + \frac4d$.
Now we take
$$
q_1 \in \left(2+\frac 4 d, 4+\frac8d\right)
\cap \left(\gamma, \frac{\gamma p}2\right],
\quad
q_2 \in \left(\frac d 2 + 1, d+2\right)\cap\left(\frac d 2 + 1, \frac{\sigma}2 \wedge \tau\right],
$$
$$
\psi = N(\mu,\mu_1)|f|^2 + N(\mu,\mu_2)g.
$$
Then we see that all the conditions in \eqref{eq0217} -- \eqref{eq0222} are satisfied.
In particular, we have $u \in L_{q_1}(\cU_T)$ by Theorem \ref{thm3}.
\end{proof}

Upon replying on the fact that the solution $u$ to the equation \eqref{eq3.30} is bounded,
we obtain the H\"{o}lder continuity of $u$ from Theorem V.1.1 in \cite{LSU}.

\begin{lemma}
								\label{lemHolder}
Under the same assumptions as in Theorem \ref{thm4},
we have
$$
|u|_{\nu/2,\nu; \cU_T}\le N,
$$
where $\nu\in (0,1)$ and $N>0$ depend on
$d$, $\mu$, $\mu_1$, $\mu_2$, $\sigma$, $\tau$, $\beta$, $u$, $\|f\|_{L_\sigma(\cU_T)}$, $\|g\|_{L_\tau(\cU_T)}$, $T$, and $|\Omega|$.
\end{lemma}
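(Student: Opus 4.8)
The plan is to freeze the unknown inside the leading coefficients, which turns \eqref{eq3.30} into a divergence form parabolic equation with bounded measurable uniformly elliptic coefficients and lower order terms of strictly subquadratic growth in the gradient, and then to invoke the classical De Giorgi--Nash--Moser H\"older estimate for bounded weak solutions, i.e., Theorem V.1.1 in \cite{LSU} together with its boundary counterpart.

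First I would use Lemma \ref{lembd} to fix $M := \|u\|_{L_\infty(\cU_T)} < \infty$. Put $\bar a_{ij}(t,x) := A_{ij}(t,x,u(t,x))$, so that $|\bar a_{ij}| \le \mu^{-1}$ and $\bar a_{ij}\xi_i\xi_j \ge \mu|\xi|^2$; then $u \in \mathring{\cH}^1_2(\cU_T)\subset V_2(\cU_T)$ is a bounded weak solution of
$$
u_t - D_i\!\left(\bar a_{ij} D_j u + a_i(t,x,u)\right) = b(t,x,u,\nabla u)\ \text{ in }\cU_T,\qquad u = 0\ \text{ on }\partial_p\cU_T.
$$
Since $|u| \le M$ and $\gamma = 2(d+2)/d$, the controlled growth conditions collapse to
$$
|a_i(t,x,u)| \le \mu_1 M^{\gamma/2} + \mu_1 f,\qquad
|b(t,x,u,\nabla u)| \le \mu_2 |\nabla u|^{\frac{d+4}{d+2}} + \mu_2 M^{\gamma-1} + \mu_2 g,
$$
using $2(1-1/\gamma) = (d+4)/(d+2) < 2$ and $\gamma - 1 = (d+4)/d$. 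Writing $\Phi := \mu_1 M^{\gamma/2}+\mu_1 f \in L_\sigma(\cU_T)$ and $\Psi := \mu_2 M^{\gamma-1}+\mu_2 g \in L_\tau(\cU_T)$, these are exactly the structure conditions in \cite[\S V.1]{LSU}: the flux $a_i(x,t,u,p) := \bar a_{ij}p_j + a_i(t,x,u)$ is of linear growth in $p$ with $a_i(x,t,u,p)p_i \ge \frac{\mu}{2}|p|^2 - N(M^\gamma + \Phi^2)$, while $-b$ has strictly subquadratic growth in $p$ plus the free term $\Psi$; the hypotheses $\sigma > d+2$ and $\tau > d/2+1$ are precisely the borderline conditions placing $\Phi^2$ and $\Psi$ in the admissible parabolic Lebesgue classes. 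Hence Theorem V.1.1 in \cite{LSU} yields a H\"older estimate on every parabolic subcylinder compactly contained in $\cU_T$, with exponent and constant depending only on $d$, $\mu$, $\mu_1$, $\mu_2$, $\sigma$, $\tau$, $M$ and $|\Omega|$.

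For the estimate up to $\partial_p\cU_T$ I would use that $u$ vanishes there and that $\partial\Omega$ obeys Assumption \ref{assump2}($\beta$), which in particular provides a uniform exterior measure density at every boundary point; the boundary De Giorgi--Nash--Moser estimate from \cite[Ch.~V]{LSU} then gives H\"older continuity near the lateral boundary $\partial_l\cU_T$. Near the initial slice I would extend $u$ by zero for $t < 0$, exactly as at the end of the proof of Theorem \ref{thm3}, so that the extension solves an equation of the same type on $(-1,T)\times\Omega$ and the interior estimate applies there too. A finite covering of $\bar{\cU}_T$ by interior and boundary cylinders, combined with the triangle inequality, upgrades the local bounds to $|u|_{\nu/2,\nu;\cU_T} \le N$, and substituting the bound for $M$ from Lemma \ref{lembd} gives the stated dependence of $\nu$ and $N$.

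The main difficulty is bookkeeping rather than conceptual: one must check that, after inserting the $L_\infty$ bound, the exponents coming from the controlled growth conditions --- the subquadratic gradient exponent $(d+4)/(d+2)$ and the summability thresholds $\sigma > d+2$, $\tau > d/2+1$ --- sit inside the admissible ranges of the structure conditions in \cite[\S V.1]{LSU}, and that the Lipschitz regularity of $\Omega$ supplies the geometric hypothesis needed for the boundary version. It is worth stressing that this lemma genuinely rests on Lemma \ref{lembd}: boundedness of $u$ is what turns the controlled growth conditions into bona fide structure conditions with forcing terms in fixed Lebesgue spaces, and it is also what absorbs the dependence of $N$ on $u$.
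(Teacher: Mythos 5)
Your proposal is correct and follows exactly the route the paper takes: the paper's proof of this lemma is a one-line appeal to Theorem V.1.1 in \cite{LSU} once the global boundedness from Lemma \ref{lembd} is in hand, which is precisely your argument (you simply spell out the verification of the structure conditions and the boundary/initial-layer details that the paper leaves implicit). No gaps.
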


Let $\bar u (t,x)$ be an extension of $u(t,x)$ such that $\bar u (t,x) = 0$ if $x \in \bR^d \setminus\Omega$.
Then define
$$
a_{ij}(t,x) =
\left\{
\begin{array}{clc}
A_{ij}(t,x,\bar u(t,x)) &\quad \text{if}  \quad 0 \le t \le T,\\
\delta_{ij} &\quad \text{otherwise}.
\end{array}
\right.
$$
Also define
$$
h_i(t,x) := a_i(t,x,u(t,x)),
\quad
h(t,x) := b(t,x,u(t,x),\nabla u(t,x)).
$$
Then the equation \eqref{eq3.30} turns into
\begin{equation}							 \label{Linearized}
\left\{
  \begin{aligned}
    u_t - D_i(a_{ij} D_ju + h_i(t,x)) = h(t,x)
 	\quad &\text{in } \cU_T, \\
    u=0 \quad &\text{on } \partial_p \cU_T.
  \end{aligned}
\right.
\end{equation}
Note that
\begin{equation}
								\label{eq0212}
|h_i(t,x)| \le \mu_1( |u|^{\gamma/2} + f)
\le \mu_1( N + f) \in L_{\sigma}(\cU_T),
\end{equation}
where $N$ is from Lemma \ref{lembd},
and
\begin{equation}
								\label{eq0211}
|h(t,x)| \le \mu_2( |\nabla u|^{2(1-1/\gamma)}+|u|^{\gamma-1} + g).	
\end{equation}
The coefficients $a_{ij}$ in \eqref{Linearized} satisfy, for any $(t_0,x_0) \in \bR^{d+1}$,
\begin{align*}
&\dashint_{\!t_0-r^2}^{\,\,\,t_0}\dashint_{x,y\in B_r(x_0)}
| a_{ij}(s,x) - a_{ij}(s,y) | \, dx \, dy \, ds\\
&= \dashint_{\!t_0-r^2}^{\,\,\,t_0} 1_{0 \le s \le T} \dashint_{x,y \in B_r(x_0)}
| A_{ij}(s,x,\bar u(s,x)) - A_{ij}(s,y,\bar u(s,y)) | \, dx \, dy \, ds\\
&\le \dashint_{\!t_0-r^2}^{\,\,\,t_0} 1_{0 \le s \le T} \dashint_{x\in B_r(x_0)}
| A_{ij}(s,x,\bar u(s,x)) - A_{ij}(s,x,\bar u(t_0 \wedge T,x_0)) | \, dx \, ds\\
&\,+ \dashint_{\!t_0-r^2}^{\,\,\,t_0} 1_{0 \le s \le T} \dashint_{x,y \in B_r(x_0)}
| A_{ij}(s,x, \bar u(t_0 \wedge T,x_0)) - A_{ij}(s,y, \bar u(t_0 \wedge T,x_0)) | \, dx \, dy \, ds\\
&\,+ \dashint_{\!t_0-r^2}^{\,\,\,t_0} 1_{0 \le s \le T} \dashint_{y \in B_r(x_0)}
| A_{ij}(s,y, \bar u(t_0 \wedge T,x_0)) - A_{ij}(s,y, \bar u(s,y)) | \, dy \, ds\\
&\le 2\omega(N r^{\nu}) + \cA^{\#}_r.
\end{align*}
That is, by using the notation in Section \ref{sec7}, we have
$$
a_R^{\#} \le 2\omega(N R^{\nu})
+ \cA^{\#}_R.
$$
Then by Assumptions \ref{SMO} and \ref{Aconti} there exists $R_2\in (0,R_1]$ such that
\begin{equation}
								\label{eq0224}
a_{R_2}^{\#} \le 2\rho,
\end{equation}
where $R_2$ depends on
the function $\omega$, and the constants $N$ and $\nu$ in Lemma \ref{lemHolder}.

\begin{proof}[Proof of Theorem \ref{thm4}]
%Let $\cU_k = (\varepsilon_k,T)\times\Omega)$, where $\varepsilon_k = k \varepsilon/M$, where $M$ will be specified below.
We set $\fp$ to be $\max\{\sigma,\tau^*\}$,
and fix
\begin{equation}
								\label{eq0223}
\beta = \beta(d, \fp, \mu),
\quad
\rho= \frac 1 2 \rho_1(d,\fp,\mu),								
\end{equation}
where $\beta(d,\fp,\mu)$ and $\rho_1(d,\fp,\mu)$ are those in Theorem \ref{liRes}.

By Theorem \ref{thm3} there exists $p_0 > 2$ such that
$u \in \cH_{p_0}^1(\cU_T)$.
If $p_0 \ge \min\{\sigma, \tau^*\}$, we immediately obtain \eqref{eq0216}.
Otherwise, we see that $u$ satisfies \eqref{Linearized}. By \eqref{eq0212} and \eqref{eq0211},
$h_i \in L_{\sigma}(\cU_T)$ and $h \in L_{q_1}(\cU_T)$, where
$$
q_1 = \min \left\{ \frac{\gamma}{2(\gamma-1)}p_0, \tau \right\}.
$$
Set $p_1 = \min \{ \sigma, q_1^* \}$.
Since $\left(\frac{\gamma}{2(\gamma-1)}p_0\right)^*$ can be taken arbitrarily large in the case that
$$
\frac{\gamma}{2(\gamma-1)}p_0 \ge d+2,
$$
we see that
$$
p_1 =
\left\{
\begin{aligned}
&\min\{\sigma, \tau^*\} \quad &\text{if}\quad\frac{\gamma}{2(\gamma-1)}p_0 \ge d+2,\\
&\min\{\sigma, \left(\frac{\gamma}{2(\gamma-1)}p_0\right)^*, \tau^*\}
\quad &\text{if}\quad\frac{\gamma}{2(\gamma-1)}p_0 < d+2.
\end{aligned}
\right.
$$
Note that $p_1 \in [\fp/(\fp-1),\fp]$.
Thus by Theorem \ref{liRes} along with \eqref{eq0224} and \eqref{eq0223} applied to \eqref{Linearized} we have $u \in \cH^1_{p_1}(\cU_T)$
and
\begin{equation*}
				%				\label{eq0225}
\|u\|_{\cH^1_{p_1}(\cU_T)}\le N(\|h_i\|_{L_\sigma(\cU_T)}+\|h\|_{L_{q_1}(\cU_T)}),	
\end{equation*}
where $N=N(d,\mu,\sigma, p_1, \fp, R_2, T, |\Omega|)$.
Bearing in mind the definitions of $h_i$ and $h$ as well as
using Theorem \ref{thm3},
%Then by the definitions of $h_i$ and $h$ (see \eqref{eq0212} and \eqref{eq0214}) as well as Theorem \ref{thm3},
%$$
%I \le N (\|u\|_{L_2(\cU_T)} + \|f\|_{L_{\sigma}(\cU_T)} + \|g\|_{L_{\tau}(\cU_T)},
%$$
%where $N=N(d,\mu,\mu_1,\mu_2,\sigma,\tau,\Omega, T,\varepsilon_1,\|u\|_{W_2^{0,1}(\cU_T)},
%\|f\|_{L_2(\cU_T)}, \|g\|_{L_{\gamma/(\gamma-1)}(\cU_T)})$.
%Thus
we obtain
\eqref{eq0216}
%if $p = p_1 = \min\{\sigma, \tau^*\}$.
%Otherwise, that is, if
unless
\begin{equation}
                                        \label{eq23.23}
\frac{\gamma}{2(\gamma-1)}p_0<d+2\quad
\text{and}
\quad
\left(\frac{\gamma}{2(\gamma-1)}p_0\right)^*< \min\{\sigma,\tau^*\}.
\end{equation}
In this case,
$$
p_1 = \left(\frac{\gamma}{2(\gamma-1)}p_0\right)^* = \frac{(d+2)p_0}{d+4-p_0}
>p_0,
$$
and by \eqref{eq0211} and the fact that $u \in \cH_{p_1}(\cU_T)$, we have
$$
h \in L_{q_2}(\cU_T),
\quad
q_2 = \min\left\{ \frac{\gamma}{2(\gamma-1)}p_1, \tau \right\}.
$$
Note that $q_2>q_1$. We define $p_2=\min\{\sigma,q_2^*\}>p_1$. Then \eqref{eq0216} is proved unless \eqref{eq23.23} holds with $p_1$ in place of $p_0$.
We repeat the above argument to obtain $p_3,p_4, \cdots$ with the recursion formula
$$
p_{k+1} = \frac{(d+2)p_k}{d+4-p_k},
\quad
k=0,1,2,\cdots.
$$
Since
$$
p_{k+1} - p_k \ge \frac{p_0(p_0-2)}{d+4},
$$
there has to be an integer $k_0$ such that $p = p_{k_0} = \min\{\sigma, \tau^*\}$.
Note that $p_k \le \fp$ for all $k=0,\cdots,k_0$.
This allows us to use Theorem \ref{liRes} in the above iteration process with the same $\beta$ and $\rho$ in \eqref{eq0223} for all $k=0,\cdots,k_0$.
\end{proof}
We remark that in order to run the bootstrap argument above it is crucial that the starting point $p_0$ is greater than $2$.

\section{Elliptic case}
							\label{elliptic-sec}
							
This section is devoted to the proofs of Theorems \ref{thm1} and \ref{thm2}. We use the same strategy as in the parabolic case. Since the argument is similar and in fact simpler, we only give an outline of it.
In this section, as in Theorem \ref{thm1} we set
$$
\gamma =
\left\{
\begin{aligned}
&\frac{2d}{d-2}, \quad d > 2,\\
&\text{any number bigger than $2$}, \quad d = 2.
\end{aligned}
\right.
$$

\subsection{Reverse H\"{o}lder's inequality: Boundary estimate}

The following is an elliptic version of Proposition \ref{prop02},
where the estimate concerns $u$ on a ball centered at a boundary point.

\begin{proposition}
								\label{prop2}
Let $u\in \mathring{W}^1_2(\Omega)$ be a weak solution to \eqref{eq3.24},
and $f\in L_2(\Omega)$ and $g\in L_{\frac{\gamma}{\gamma-1}}(\Omega)$.
Then, for any $x_0 \in \partial\Omega$ and $0 < R \le R_0$,
\begin{multline*}
\dashint_{\Omega_{R/2}(x_0)} (|\nabla u|^2+|u|^\gamma)
\le N \left( \dashint_{\Omega_R(x_0)} |\nabla u|^q \right)^{\frac{2}{q}}
+ N \dashint_{\Omega_R(x_0)} \left(|f|^2 + |F|^2 \right)\\
+ N R^{d\gamma\left(\frac{1}{\gamma}-\frac12\right) + \gamma}
\left(\int_{\Omega_R(x_0)} |\nabla u|^2\right)^{\frac{\gamma}{2}-1}
\left(\dashint_{\Omega_R(x_0)} |\nabla u|^2\right),
\end{multline*}
where $F = |g|^{\frac12\frac{\gamma}{\gamma-1}}$ and
$N = N(d,\mu,\mu_1,\mu_2,\beta)$.
\end{proposition}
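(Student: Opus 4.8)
The plan is to mirror the proof of the parabolic boundary estimate (Proposition~\ref{prop02}), taking advantage of the simplifications due to the absence of the time variable and to the fact that $u$ vanishes on $\partial\Omega$. The first step is the elliptic analogue of the energy inequality of Lemma~\ref{lemm0201}. Fix a cutoff $\eta\in C_0^\infty(B_R(x_0))$ with $0\le\eta\le1$, $\eta\equiv1$ on $B_{R/2}(x_0)$ and $|\nabla\eta|\le N/R$, and use $\varphi=u\eta^4$, which lies in $\mathring{W}^1_2(\Omega)$, as a test function in \eqref{eq3.24}. Expanding $D_i(u\eta^4)=\eta^4D_iu+4u\eta^3D_i\eta$, applying uniform ellipticity on the left, the growth bounds $|a_i|\le\mu_1(|u|^{\gamma/2}+f)$, $|b|\le\mu_2(|\nabla u|^{2(1-1/\gamma)}+|u|^{\gamma-1}+g)$ on the right, and Young's inequality to absorb every term carrying $|\nabla u|^2\eta^4$, one obtains
\[
\mu\int_{\Omega_R(x_0)}|\nabla u|^2\eta^4\le N\,(I_1+I_2+I_3),
\]
where $I_1=R^{-2}\int_{\Omega_R(x_0)}|u|^2\eta^2$, $I_2=\int_{\Omega_R(x_0)}|u|^\gamma$, $I_3=\int_{\Omega_R(x_0)}(|f|^2+|F|^2)$ with $F=|g|^{\frac12\frac{\gamma}{\gamma-1}}$, and $N=N(d,\mu,\mu_1,\mu_2)$.

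Next I would estimate each $I_k$ after extending $u$ by zero outside $\Omega$. Since $x_0\in\partial\Omega$ and the Lipschitz constant is at most the small number $\beta$, one has $|B_R(x_0)\setminus\Omega|\ge c(d)\,|B_R(x_0)|$, so the extended $u$ vanishes on a definite portion of $B_R(x_0)$ and the Sobolev--Poincar\'e inequality applies there with constant $N=N(d,\beta)$. For $I_2$ this yields $\|u\|_{L_\gamma(B_R(x_0))}\le NR^{\,d(1/\gamma-1/2)+1}\|\nabla u\|_{L_2(B_R(x_0))}$ (scale invariant when $d>2$); hence, using $\eta\le1$ and $\nabla u\equiv0$ off $\Omega$,
\[
I_2\le\int_{B_R(x_0)}|u|^\gamma\le NR^{\,d\gamma(1/\gamma-1/2)+\gamma}\Big(\int_{\Omega_R(x_0)}|\nabla u|^2\Big)^{\gamma/2},
\]
and, writing $(\,\cdot\,)^{\gamma/2}=(\,\cdot\,)^{\gamma/2-1}(\,\cdot\,)$ and then dividing the inequality of the first step by $|\Omega_{R/2}(x_0)|\sim R^d$, this turns into precisely the third term of the proposition. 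For $I_1$, take $q=\gamma/(\gamma-1)\in(1,2)$; when $d>2$ its Sobolev conjugate is $2$, so the scale-invariant Sobolev inequality gives $\|u\|_{L_2(B_R(x_0))}\le N\|\nabla u\|_{L_q(B_R(x_0))}$, whence
\[
I_1\le R^{-2}\int_{B_R(x_0)}|u|^2\le NR^{-2}\Big(\int_{\Omega_R(x_0)}|\nabla u|^q\Big)^{2/q}=NR^{\,2d/q-2}\Big(\dashint_{\Omega_R(x_0)}|\nabla u|^q\Big)^{2/q};
\]
since $2d/q-2=d$, dividing by $R^d$ produces the first term of the proposition, while $R^{-d}I_3=N\dashint_{\Omega_R(x_0)}(|f|^2+|F|^2)$ is the second term, and the same Sobolev estimate bounds $\dashint_{\Omega_{R/2}(x_0)}|u|^\gamma$ directly by the third term. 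Collecting these bounds gives the claimed inequality with $N=N(d,\mu,\mu_1,\mu_2,\beta)$. In dimension $d=2$, where $\gamma$ (and hence $q$) is only required to be some finite number above $2$, the same chain runs with the Sobolev--Poincar\'e inequalities now carrying nonnegative powers of $R$ that, after all the divisions, still leave a net $R$-exponent equal to $d$.

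The main difficulty is not any single estimate but organizing the output into exactly the same form as the (still to be proved) interior estimate, so that a Gehring--Giaquinta--Modica iteration can be applied on a uniform family of balls; the boundary case is in fact the easy one, because $u|_{\partial\Omega}=0$ removes the need for the average-subtraction and iterated H\"older argument required in the interior case (compare Proposition~\ref{prop01}). The secondary points that need care are keeping the subquadratic exponent $q$ strictly above $1$, so that $2/q>1$ as required by the reverse-H\"older iteration --- which is why in dimension $2$ one must take $\gamma$, and with it $q=\gamma/(\gamma-1)$, only up to a finite value rather than at a fixed critical one --- and verifying that all the Sobolev--Poincar\'e constants depend only on $d$ and $\beta$, through the volume bound $|B_R(x_0)\setminus\Omega|\gtrsim R^d$ guaranteed by the smallness of the Lipschitz constant.
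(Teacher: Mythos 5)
Your proposal is correct and follows essentially the same route as the paper's proof: test the equation with $u$ times a power of the cutoff, absorb the gradient terms by Young's inequality, estimate $R^{-2}\int|u|^2$ and $\int|u|^\gamma$ by Sobolev--Poincar\'e inequalities (valid with constant $N(d,\beta)$ because the zero-extended $u$ vanishes on a fixed fraction of $B_R(x_0)$), split $\bigl(\int|\nabla u|^2\bigr)^{\gamma/2}$ as $\bigl(\int|\nabla u|^2\bigr)^{\gamma/2-1}\int|\nabla u|^2$, and divide by $R^d$. The only cosmetic differences are the cutoff power ($\eta^4$ versus the paper's $\eta^2$) and your choice $q=\gamma/(\gamma-1)$, which coincides with the paper's $q=2d/(d+2)$ for $d>2$ and is an equally valid subquadratic exponent when $d=2$.
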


\begin{proof}
Let $\eta_0\in C_0^\infty(B_1)$ be a function satisfying $0 \le \eta_0 \le 1$ and $\eta_0 \equiv 1$ on $B_{1/2}$. Let $R\in (0,1]$ and $x_0\in\partial\Omega$. Using a test function $u\eta^2$, where $\eta=\eta_0(R^{-1}(\cdot-x_0))$,
we have
$$	
\int_{\Omega_R(x_0)} A_{ij}(x,u) D_i(u \eta^2) D_j u
+ \int_{\Omega_R(x_0)} a_i(x,u) D_i(u \eta^2)
= \int_{\Omega_R(x_0)} b(x,u,\nabla) u \eta^2.
$$
That is,
%$$
%D_i(u \eta^2) = \eta^2 D_i u + 2 u \eta D_i \eta,
%$$
$$
\int_{\Omega_R(x_0)} A_{ij}(x,u) \eta (D_i u) \eta (D_j u)
= -\int_{\Omega_R(x_0)} A_{ij}(x,u) 2 u \eta (D_i \eta) (D_j u)
$$
$$
-\int_{\Omega_R(x_0)} a_i(x,u) D_i(u \eta^2)
+ \int_{\Omega_R(x_0)} b(x,u,\nabla) u \eta^2
=: J_1 + J_2 + J_3.
$$
We estimate $J_1$, $J_2$ and $J_3$ by using Young's inequality.

\noindent
Estimate of $J_1$:
$$
J_1 \le \mu \int_{\Omega_R(x_0)} |\nabla u| |u| \eta |\nabla  \eta|
\le \frac \mu {16} \int_{\Omega_R(x_0)} \eta^2 |\nabla u|^2
+ N \int_{\Omega_R(x_0)} |u|^2 |\nabla \eta|^2.
$$

\noindent
Estimate of $J_2$:
\begin{align*}
J_2 &\le \mu_1 \int_{\Omega_R(x_0)} |u|^{\gamma/2} |\nabla u| \eta^2
+ \mu_1 \int_{\Omega_R(x_0)} |f| |\nabla u| \eta^2\\
&\quad +  \mu_1 \int_{\Omega_R(x_0)} |u|^{\gamma/2} |u| |\nabla \eta| \eta
+ \mu_1 \int_{\Omega_R(x_0)} |f| |u| |\nabla \eta| \eta\\
&\le \frac \mu {16} \int_{\Omega_R(x_0)}|\nabla u|^2  \eta^2 +
N \int_{\Omega_R(x_0)} |u|^{\gamma} \eta^2
%+ \varepsilon \int_{B_R^+} \eta^2 |\nabla u|^2
+ N \int_{\Omega_R(x_0)} |f|^2 \eta^2
+ N \int_{\Omega_R(x_0)} |u|^2 |\nabla \eta|^2.
\end{align*}

\noindent
Estimate of $J_3$:
\begin{align*}
J_3 &\le \mu_2 \int_{\Omega_R(x_0)} |\nabla u|^{2(1-1/\gamma)} |u| \eta^2
+ \mu_2 \int_{\Omega_R(x_0)} |u|^{\gamma-1} |u| \eta^2
+ \mu_2 \int_{\Omega_R(x_0)}|g| |u| \eta^2\\
&\le \frac \mu {16} \int_{\Omega_R(x_0)} |\nabla u|^{2} \eta^2
+ \mu_2 \int_{\Omega_R(x_0)} |u|^{\gamma} \eta^2
+ \mu_2 \int_{\Omega_R(x_0)}|g|^{\frac \gamma {\gamma-1}} \eta^2.
\end{align*}
Thus,
\begin{align*}
\mu &\int_{\Omega_R(x_0)} |\nabla u|^2 \eta^2\\
&\le N \int_{\Omega_R(x_0)} |u|^2 |\nabla \eta|^2
+ N \int_{\Omega_R(x_0)} |u|^\gamma
+ N \int_{\Omega_R(x_0)} (|f|^2+|g|^{\frac \gamma {\gamma-1}})\\
&:= N(I_1 + I_2 + I_3).
\end{align*}

Now we get estimates for $I_1$ and $I_2$ as follows.

\noindent
Estimate of $I_2$: By the Sobolev-Poincar\'e inequality,
\begin{equation}
                                    \label{eq13.33}
I_2 \le
NR^{d\gamma\left(\frac{1}{\gamma}-\frac12\right)+\gamma}
\left(\int_{\Omega_R(x_0)} |\nabla u|^2\right)^{\frac{\gamma}{2}},
\end{equation}
where $N=N(d,\beta)$.

\noindent
Estimate of $I_1$: Again by the Sobolev-Poincar\'e inequality,
$$
I_1 \le \frac{1}{R^2}\int_{\Omega_R(x_0)} |u|^2
\le
NR^d \left( \dashint_{\Omega_R(x_0)} |\nabla u|^q \right)^{\frac{2}{q}},
$$
where $q = \frac{2d}{d+2}$ and $N=N(d,\beta)$.

Therefore,
$$
\mu \int_{\Omega_R(x_0)} |\nabla u|^2 \eta^2
\le N R^d \left( \dashint_{\Omega_R(x_0)} |\nabla u|^q \right)^{\frac{2}{q}}
+ N \int_{\Omega_R(x_0)} \left(|f|^2 + |F|^2 \right)
$$
$$
\quad + N R^{d\gamma\left(\frac{1}{\gamma}-\frac12\right) + \gamma}
\left(\int_{\Omega_R(x_0)} |\nabla u|^2\right)^{\frac{\gamma}{2}-1}
\left(\int_{\Omega_R(x_0)} |\nabla u|^2\right),
$$
where
$$
F =
|g|^{\frac12\frac{\gamma}{\gamma-1}}.
$$
Finally, we obtain the desired inequality in the proposition
by adding \eqref{eq13.33} to the above inequality and diving all terms by $R^d$.
\end{proof}

\subsection{Proofs of Theorems \ref{thm1} and \ref{thm2}}
We go through the same arguments for the parabolic case shown in the previous sections.
Especially, for the proof of Theorem \ref{thm1} we use Proposition \ref{prop2} in this paper and Theorem V.2.2 in \cite{Gi83}, the latter is an elliptic version of Proposition \ref{prop01}. We leave the details to the interested reader.

%===============================================================================%

%\section*{Acknowledgement}

\bibliographystyle{plain}

\begin{thebibliography}{1}

\bibitem{Arkh94} A.A. Arkhipova,
On the regularity of the solution of the Neumann problem for quasilinear parabolic systems, (Russian. Russian summary) \textit{Izv. Ross. Akad. Nauk Ser. Mat.} \textbf{58} (1994), no. 5, 3--25; translation in
\textit{Russian Acad. Sci. Izv. Math.} \textbf{45} (1995), no. 2, 231--253.

\bibitem{Arkh95} A.A. Arkhipova, Reverse H\"older inequalities with boundary integrals and $L_p$-estimates for solutions of nonlinear elliptic and parabolic boundary-value problems, \textit{Nonlinear evolution equations},  15--42, Amer. Math. Soc. Transl. Ser. 2, 164, Amer. Math. Soc., Providence, RI, 1995.

\bibitem{BoDuMi10} V. B\"ogelein, F. Duzaar, G. Mingione, The boundary regularity of non-linear parabolic systems. I.  \textit{Ann. Inst. H. Poincar\'e Anal. Non Lin\'eaire}  \textbf{27}  (2010),  no. 1, 201--255; II., ibid., 145--200.

\bibitem{Camp} S. Campanato, $L^{p}$ regularity for weak solutions of parabolic systems,  \textit{Ann. Scuola Norm. Sup. Pisa Cl. Sci. (4)}  \textbf{7}  (1980), no. 1, 65--85. %higher order linear parabolic

\bibitem{Camp1981} S. Campanato, Partial {H}\"older continuity of solutions of quasilinear parabolic systems of second order with linear growth, \textit{Rend. Sem. Mat. Univ. Padova}, \textbf{64}, (1981), 59--75.

\bibitem{Camp1982} S. Campanato, H\"older continuity and partial {H}\"older continuity results for {$H^{1,q}$}-solutions of nonlinear elliptic systems with controlled growth, \textit{Rend. Sem. Mat. Fis. Milano}, \textbf{52}, (1982), 435--472 (1985).

\bibitem{Camp1982P} S. Campanato, {$L^{p}$} regularity and partial {H}\"older continuity for solutions of second order parabolic systems with strictly controlled growth, \textit{Ann. Mat. Pura Appl. (4)}, \textbf{128}, (1981), 287--316.

\bibitem{Camp1984} S. Campanato, On the nonlinear parabolic systems in divergence form. H\"{o}lder continuity and partial H\"{o}lder continuity, \textit{Ann. Mat. Pura Appl. (4)} \textbf{137}  (1984), 83--122.

\bibitem{Camp1986} S. Campanato, Nonlinear elliptic systems with quadratic growth, \textit{Confer. Sem. Mat. Univ. Bari}, \textbf{No. 208} (1986), 1--19.

\bibitem{CFL1} F. Chiarenza, M. Frasca, P. Longo, Interior $W^{2,p}$ estimates for nondivergence elliptic
equations with discontinuous coefficients,
\textit{Ricerche Mat.} \textbf{40} (1991), 149--168.

\bibitem{De Giorgi} E. De Giorgi, Sulla differenziabilit\`{a} e l'analiticit\`{a} delle estremali degli integrali multipli regolari, \textit{Mem. Accad. Sci. Torino. Cl. Sci. Fis. Mat. Nat. (3)}, \textbf{3},
(1957), 25--43.

\bibitem{DuGr00} F. Duzaar, J. Grotowski,
Optimal interior partial regularity for nonlinear elliptic systems: the method of $A$-harmonic approximation, \textit{Manuscripta Math.} \textbf{103} (2000), no. 3, 267--298. %Interior partial regularity for elliptic homogeneous and inhomogeneous systems

%\bibitem{DuMi05} F. Duzaar, G. Mingione,
%Second order parabolic systems, optimal regularity, and singular sets of solutions,
%\textit{Ann. Inst. H. Poincar\'e Anal. Non Lin\'eaire} \textbf{22} (2005), no. 6, 705--751.
%Interior partial reguarlity for parabolic homogeneous systems

\bibitem{DuKrMi07} F. Duzaar, J. Kristensen, G. Mingione,
The existence of regular boundary points for non-linear elliptic systems,
\textit{J. Reine Angew. Math.} \textbf{602} (2007), 17--58.
%boundary  partial reguarlity for elliptic homogeneous systems

\bibitem{DK09} H. Dong, D. Kim, $L_p$ solvability of divergence type parabolic and elliptic systems with partially BMO coefficients, \textit{Calc. Var. Partial Differential Equations}, to appear (2010).

\bibitem{FengZheng} Z. Feng, S. Zheng, Regularity for quasi-linear elliptic systems with discontinuous coefficients, \textit{Dyn. Partial Differ. Equ.} \textbf{5} (2008) 87--99.

\bibitem{Gehring} F. Gehring, The $L^p$ integrability of the partial derivatives of a quasiconformal mapping, \textit{Acta Math.} \textit{139}, 265--277 (1973).

\bibitem{Gi_ex} M. Giaquinta, A counter-example to the boundary regularity of solutions to elliptic quasilinear systems. \textit{Manuscripta Math.} \textbf{24} (1978), no. 2, 217--220.

\bibitem{Gi78} M. Giaquinta, E. Giusti, Nonlinear elliptic systems with quadratic growth, \textit{Manuscripta Math.} \textbf{24} (1978), no. 3, 323--349.

\bibitem{Gi83} M. Giaquinta,
\textit{Multiple integrals in the calculus of variations and nonlinear elliptic systems}.
Princeton University Press, Princeton, NJ, 1983.


\bibitem{GiaMod} M. Giaquinta, G. Modica, Regularity results for some classes of higher order nonlinear elliptic systems, \textit{J. Reine Angew. Math.} \textbf{311/312} (1979), 145--169.

\bibitem{GiaStr} M. Giaquinta, M. Struwe, On the partial regularity of weak solutions of nonlinear parabolic systems,  \textit{Math. Z.}  \textbf{179}  (1982), no. 4, 437--451.

\bibitem{Grotowski} J. Grotowski, Boundary regularity for quasilinear elliptic systems, \textit{Comm. Partial Differential Equations} \textbf{27} (2002), no. 11-12, 2491--2512.

\bibitem{LSU} O. A. Lady\v{z}enskaja, V. A. Solonnikov, N. N. Ural'ceva,
\textit{Linear and quasilinear equations of parabolic type.}
American Mathematical Society: Providence, RI, 1967.

\bibitem{Lieberman} G. M. Lieberman,
\textit{Second order parabolic differential equations},
World Scientific Publishing Co., Inc., River Edge, NJ, 1996.

\bibitem{MarinoMaugeri} M. Marino, A. Maugeri, {$L^p$}-theory and partial {H}\"older continuity for quasilinear parabolic systems of higher order with strictly controlled growth, \textit{Ann. Mat. Pura Appl. (4)}, \textbf{139}, (1985), 107--145.

\bibitem{MaPaSo00} A. Maugeri, D. Palagachev, L. Softova, \textit{Elliptic and Parabolic Equations with Discontinuous Coefficients}, Math. Res., vol. 109, Wiley---VCH, Berlin, 2000.

\bibitem{MeyElc} N.G. Meyers, A. Elcrat,
Some results on regularity for solutions of non-linear elliptic systems and quasi-regular functions, \textit{Duke Math. J.} \textbf{42} (1975), 121--136.

\bibitem{Nash} J. Nash, Continuity of solutions of parabolic and elliptic equations, \textit{Amer. J. Math.}, \textbf{80}, (1958), 931--954.

%\bibitem{Pala95} D. Palagachev, Quasilinear elliptic equations with VMO coefficients, \textit{Trans. Amer. Math. Soc.} \textbf{347}  (1995),  no. 7, 2481--2493.

\bibitem{Pala09} D. Palagachev, Global H\"older continuity of weak solutions to quasilinear divergence form elliptic equations,  \textit{J. Math. Anal. Appl.}  \textbf{359}  (2009),  no. 1, 159--167.

\bibitem{Pulv67} G. Pulvirenti, Sulla sommabil\`a $L^{p}$ delle derivate prime delle soluzioni deboli del problema di Cauchy-Dirichlet per le equazioni lineari del secondo ordine di tipo parabolico, (Italian)  \textit{Matematiche (Catania)} \textbf{22} (1967), 250--265. %linear parabolic systems

\bibitem{StJoMa86} J. Star\'a, O. John, J. Mal\'y, Counterexamples to the regularity of weak solutions of the quasilinear parabolic system, \textit{Comment. Math. Univ. Carolin.} \textbf{27} (1986) 123--136.
\end{thebibliography}

\def\cprime{$'$}\def\cprime{$'$} \def\cprime{$'$} \def\cprime{$'$}
  \def\cprime{$'$} \def\cprime{$'$}

\end{document}